\documentclass[a4paper]{amsart}
\usepackage{amsmath,amsthm,amssymb,latexsym,epic,bbm,comment,mathrsfs}
\usepackage{graphicx,enumerate,stmaryrd, xcolor, color, cancel}
\usepackage[all,2cell]{xy}
\xyoption{2cell}

\usepackage{soul}

\usepackage{tikz-cd}

\theoremstyle{plain}
\newtheorem{thm}{Theorem}
\newtheorem*{thm*}{Theorem}
\newtheorem*{thmA}{Theorem A}
\newtheorem*{thmB}{Theorem B}
\newtheorem*{thmC}{Theorem C}

\newtheorem{lem}[thm]{Lemma}
\newtheorem{prop}[thm]{Proposition}

\newtheorem{cor}[thm]{Corollary}
\newtheorem{df-prop}[thm]{Definition-Proposition}

\theoremstyle{definition}

\theoremstyle{remark}
\newtheorem{rem}[thm]{Remark}
\newtheorem{ex}[thm]{Example}

\usepackage[all]{xy}
\usepackage[active]{srcltx}
\usepackage[parfill]{parskip}
\usepackage{enumerate}

\usepackage{hyperref}
\newcommand{\mc}{\mathcal}
\newcommand{\mf}{\mathfrak}
\newcommand{\C}{\mathbb C}

\newcommand{\oa}{{\bar 0}}
\newcommand{\ob}{{\bar 1}}

\newcommand{\g}{\mathfrak{g}}
\def\Ann{{\text{Ann}}}

\def\ov{\overline}

\newcommand{\h}{\mathfrak{h}}

\newcommand{\Z}{{\mathbb Z}}

\def\sMod{\operatorname{-sMod}\nolimits}
\def\Wmod{\operatorname{-Wmod}\nolimits}

\def\Res{\operatorname{Res}\nolimits}
\def\Ind{\operatorname{Ind}\nolimits}

\def\spann{\operatorname{span}\nolimits}

\def\gr{\operatorname{gr}}

\def\ov{\overline}
\newcommand{\ad}{\mathrm{ad}}

\begin{document}
\title{Categorical Equivalences of Finite W-Superalgebras and Clifford Twists}

\author[Chen]{Chih-Whi Chen} \address{Department of Mathematics, National Central University, Chung-Li, Taiwan 32054 \\ National Center of Theoretical Sciences,
	Taipei, Taiwan 10617} \email{cwchen@math.ncu.edu.tw}

\author[Cheng]{Shun-Jen Cheng}
\address{Institute of Mathematics, Academia Sinica, Taipei, Taiwan 10617} \email{chengsj@as.edu.tw}

\author[Suh]{Uhi Rinn Suh} \address{Department of Mathematical Sciences and Research institute of Mathematics, Seoul National University,
	Gwanak-ro 1, Gwanak-gu, Seoul 08826, Korea} \email{uhrisu1@snu.ac.kr}

\begin{abstract}  
Associated with an even nilpotent element $e$ in a basic classical Lie superalgebra $\g$, we study, in full generality, two constructions of finite $W$-superalgebras, defined via Whittaker models and isotropic subspaces, respectively. We prove that both formulations are independent of the various choices made in their constructions, thereby yielding, for a fixed good grading for $e$, at most two isomorphism classes of $W$-superalgebras. In the case when there are two non-isomorphic versions, we establish that they differ precisely by a Clifford extension. Consequently, when the two $W$-superalgebras are non-isomorphic, their module categories are equivalent up to a Clifford twist. Building on this equivalence and utilizing the Skryabin equivalence,  we classify their irreducible representations in terms of generalized Whittaker modules over $\g$.
\end{abstract}

\maketitle




\section{Introduction}

\subsection{Background}	The finite $W$-algebra $U(\g,e)$  is an associative algebra   constructed from a nilpotent element $e$ in a finite-dimensional semisimple Lie algebra $\g$, which naturally generalizes the universal enveloping algebra $U(\g) = U(\g,0)$. Stemming from Kostant's seminal paper \cite{Ko78}, the representation theory of these objects has enjoyed a surge of interest in recent years, partly owing to works that made connections with other branches of representation theory; see, e.g., \cite{BT93, RS99, Pr02, BK06, Lo10b, LPTTW25} for a sample.

The general construction of a finite $W$-algebra, as formulated by Gan and Ginzburg \cite{GG02}, a priori, depends on a choice of an isotropic subspace $\mathfrak{l}$ of the symplectic space $\g(-1)$ associated with a good grading  $\g=\bigoplus_{k\in \mathbb{Z}}\g(k)$ for the nilpotent element $e$ of a semisimple Lie algebra \cite{EK05}. Accordingly, we shall denote it by $\mc W_{\mf l}$.  The most prominent cases occur at the two extremes, i.e., when $\mf l$ is either trivial or maximal isotropic. 
The first case, appearing in the context of mathematical physics in its affine counterpart, has received considerable attention in a slightly different incarnation; see, e.g., \cite{BT93,RS99,DK06}.  Meanwhile, the latter case, i.e., when $\mf l$ is maximal isotropic, as originally introduced by Premet \cite{Pr02}, has proven essential for investigating connections to the representation theory of Lie algebras; e.g., {\em Skryabin's equivalence} \cite{Skr02} establishes a categorical equivalence between the category of $\mc W_{\mf l}$-modules and that of the so-called Whittaker $\g$-modules. Gan and Ginzburg in \cite{GG02} prove the important result that the definition of a finite $W$-algebra is independent of the choice of the isotropic subspace $\mf l$, up to isomorphism.

The construction of finite $W$-algebras affords a natural generalization to the case when $\g$ is a Lie superalgebra. Parallel to the Lie algebra case, the finite $W$-algebras associated to Lie superalgebras under the formulation corresponding to the case $\mathfrak{l}=0$ arise naturally as {\em Zhu algebras} in the theory of vertex algebras, which provides a bridge connecting them to the richer theory of their affine counterparts. More precisely, since the correspondence between irreducible positive-energy modules of a vertex algebra and irreducible modules of the corresponding Zhu algebra is well understood, understanding the representation theory of finite $W$-algebras $\mc W_{\mf l=0}$ has important implications for the theory of affine W-algebras; see \cite{Zhu96, KRW03, KW04,DDCDS+06}.

On the other hand, the super-analogue of Premet's formulation of finite $W$-algebras, denoted by $U(\g,e)$, has also been extensively developed; see, e.g., \cite{BB13, Zh14, ZS15, PS16, SX20}. In this formulation, a super analogue of Skryabin's equivalence holds, implying that $U(\g,e)$-modules can be studied via the representation theory of Whittaker $\g$-modules.  In this paper, we compare the associative algebras $\mathcal W_{\mathfrak l}$ and $U(\g,e)$, together with their representation theories when $\g$ is a basic Lie superalgebra.

\subsection{Description of results} \label{sect::DR}
\subsubsection{}
Let $\g=\g_\oa\oplus\g_\ob$ be a basic Lie superalgebra equipped with a supersymmetric even invariant bilinear form $(\_ |\_ )$, and let $e\in \g_{\bar 0}$ be an even nilpotent element.    Consider a good grading $\Gamma$ for $e$: $\g=\bigoplus_{k\in \mathbb{Z}} \g(k)$, see \cite{EK05}. Define the subalgebra $
\mathfrak m_{\mathfrak l}=\mathfrak l\oplus\bigoplus_{k\le -2}\g(k),
$
for  $\mathfrak l\subseteq \g(-1)$ an isotropic subspace with respect to the super-symplectic bilinear form 
$(e| [\cdot,\cdot]).$

The natural analogue of Gan--Ginzburg's definition of a finite $W$-algebra is the finite $W$-superalgebra
\begin{equation} \label{eq:GG_def}
   \mathcal{W}_{\mathfrak l}=\left(U(\g)\otimes_{U(\mathfrak m_{\mathfrak l})}\mathbb C_\chi\right)^{\text{ad}\mathfrak m_{\mathfrak l}'}, 
\end{equation}
where $\chi:\mathfrak m_{\mathfrak l}\to \mathbb{C}$ is the character defined by $m\mapsto (m|e)$, for $m\in\mf m_{\mf l}$, and $\mathfrak m_{\mathfrak l}'=\mathfrak l'\oplus\bigoplus_{k\le -2}\g(k)$, where $\mf l'$ is the super-symplectic complement of $\mf l$ in $\g(-1)$. In \cite{GG02}, it was proved that, when $\g$ is a Lie algebra, the associative algebra structure of $\mathcal{W}_{\mathfrak l}$ does not depend on the choice of the isotropic subspace $\mathfrak l$. Later, Zhao extended this result to the setting of Lie superalgebras, provided that  $\rm r:=\dim\g(-1)$ is even, see \cite[Section 3, Remark 3.11]{Zh14}. 

The first theorem of this paper shows that the above result remains valid without the assumption on the parity of $\rm r$.

\begin{thmA}[Theorem \ref{thm::GGrev}]
    The associative algebra $\mathcal{W}_{\mathfrak l}$ in \eqref{eq:GG_def} is independent of the choice of the isotropic subspace $\mathfrak l$. Furthermore, if $\Gamma$ and $\Gamma'$ are two Dynkin (hence good) gradings for $e$, then their corresponding finite $W$-superalgebras are isomorphic.
\end{thmA}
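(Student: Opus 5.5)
We follow the Gan--Ginzburg strategy, adapted to the super setting and without assuming that $\mathrm r=\dim\g(-1)$ is even. The idea is to compare every $\mathcal W_{\mathfrak l}$ with the single algebra $\mathcal W_{\mathfrak l_0}$ for a fixed isotropic $\mathfrak l_0$, through a chain $\mathfrak l\subseteq\mathfrak l_1$ of isotropic subspaces. It therefore suffices to prove that $\mathcal W_{\mathfrak l}\cong\mathcal W_{\mathfrak l_1}$ whenever $\mathfrak l\subseteq \mathfrak l_1$ are isotropic. Any two isotropic subspaces are joined by a zigzag $\mathfrak l\supseteq \mathfrak l\cap\mathfrak l_2\subseteq \mathfrak l_2$, and $\mathfrak l\cap\mathfrak l_2$ is again isotropic. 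Hence independence of $\mathfrak l$ follows once the inclusion case is settled.

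For $\mathfrak l\subseteq\mathfrak l_1$ we have $\mathfrak m_{\mathfrak l}\subseteq\mathfrak m_{\mathfrak l_1}\subseteq\mathfrak m'_{\mathfrak l_1}\subseteq\mathfrak m'_{\mathfrak l}$.

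\textbf{Step 1: the comparison map.} Set $Q_{\mathfrak l}=U(\g)\otimes_{U(\mathfrak m_{\mathfrak l})}\mathbb C_\chi$. There is a natural surjection $\pi:Q_{\mathfrak l}\to Q_{\mathfrak l_1}$, and it is $\mathfrak m'_{\mathfrak l_1}$-equivariant. Composing $\mathcal W_{\mathfrak l}=Q_{\mathfrak l}^{\ad \mathfrak m'_{\mathfrak l}}\subseteq Q_{\mathfrak l}^{\ad\mathfrak m_{\mathfrak l_1}}$ with $\pi$ gives a map into $Q_{\mathfrak l_1}^{\ad \mathfrak m_{\mathfrak l_1}}$. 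On this invariant subspace the twisted action of $\mathfrak l_1$ is trivial, and its image lies in $\mathcal W_{\mathfrak l_1}$ because $\mathfrak m'_{\mathfrak l_1}\subseteq \mathfrak m'_{\mathfrak l}$. We check that the map is an algebra homomorphism, using the standard description $\mathcal W_{\mathfrak l}\cong \mathrm{End}_{U(\g)}(Q_{\mathfrak l})^{op}$ restricted to invariants. As usual, this is valid because $\chi$ vanishes on $[\mathfrak m'_{\mathfrak l},\mathfrak m_{\mathfrak l}]$.

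\textbf{Step 2: bijectivity via Kazhdan filtrations.} We put the Kazhdan filtration on everything; $\pi$ is compatible with it. We then pass to associated graded objects:
\[
\gr Q_{\mathfrak l}\cong \mathbb C[\chi+\mathfrak m_{\mathfrak l}^\perp], \qquad \gr\mathcal W_{\mathfrak l}\cong \mathbb C[\chi+\mathfrak m_{\mathfrak l}^\perp]^{M'_{\mathfrak l}}.
\]
Here super functions replace polynomial functions, and $M'_{\mathfrak l}$ is the unipotent supergroup of $\mathfrak m'_{\mathfrak l}$. The key input is the super analogue of the Gan--Ginzburg transversality statement: the action map
\[
M'_{\mathfrak l}\times(e+\g^f)\to\chi+\mathfrak m_{\mathfrak l}^\perp
\]
is an isomorphism of affine supervarieties. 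On the even part this is the classical argument using the contracting $\mathbb C^\times$-action. On odd coordinates it amounts to a linear-algebra statement: $\ad e$ maps $\mathfrak m'_{\mathfrak l}$ isomorphically onto a complement of $\g^f$ in $\mathfrak m_{\mathfrak l}^\perp$, which is checked degree by degree from the good grading. Consequently both graded invariant algebras are identified with $\mathbb C[e+\g^f]$, and $\gr\pi$ is the identity under these identifications. Vanishing of the relevant higher Lie algebra cohomology (again from the free action) gives $\gr(Q^{\mathfrak m'})=(\gr Q)^{\mathfrak m'}$. Since the filtrations are exhaustive and bounded below, $\pi$ is then an isomorphism.

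\textbf{Main obstacle, and the grading part.} The delicate point is odd $\mathrm r$. Then $\g(-1)_{\bar1}$ has odd dimension, and no Lagrangian $\mathfrak l$ has $\mathfrak l=\mathfrak l'$. The dimension count $\dim\mathfrak m'_{\mathfrak l}-\dim\mathfrak m_{\mathfrak l}=\dim\g(-1)-2\dim\mathfrak l$ no longer cancels in the odd direction. So one must verify the transversality for $\mathfrak m'_{\mathfrak l}\neq\mathfrak m_{\mathfrak l}$ directly in the super setting, checking that the odd part of $\mathfrak l'/\mathfrak l$ contributes exactly the right odd coordinates on both sides; this is where Zhao's parity hypothesis entered. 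We would first try to reduce to the case where $\mathfrak l_1/\mathfrak l$ is one-dimensional and verify the statement by explicit computation on the extra coordinate. For two Dynkin gradings $\Gamma,\Gamma'$, both come from $\ad h$ for $\mathfrak{sl}_2$-triples containing $e$. By the super Kostant theorem these triples are conjugate under the centralizer of $e$ in $G_{\bar0}$. The conjugating element sends $\mathfrak m_{\mathfrak l}$ for $\Gamma$ to some $\mathfrak m_{\mathfrak l''}$ for $\Gamma'$ and preserves $\chi$, so it induces $\mathcal W^{\Gamma}_{\mathfrak l}\cong\mathcal W^{\Gamma'}_{\mathfrak l''}$. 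The first part then removes the dependence on $\mathfrak l''$.
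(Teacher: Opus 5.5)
Your proposal is correct and follows essentially the same route as the paper. Both rest on the Gan--Ginzburg argument: the transversality isomorphism $M'_{\mathfrak l}\times\mathcal S\to\chi+\mathfrak m_{\mathfrak l}^{\perp}$ and the vanishing of $H^{n}(\mathfrak m'_{\mathfrak l},\mathbb C[M'_{\mathfrak l}])$ for $n\geq 1$ feed the Kazhdan-filtration comparison, and the Dynkin part uses Kostant conjugacy of the $\mathfrak{sl}(2)$-triples inside $\g_{\bar 0}$, with $\exp\ad X$ extended to $\g$. Your closing worry about odd $\mathrm r$ is unnecessary. Your own degree-by-degree check that $\ad e$ maps $\mathfrak m'_{\mathfrak l}$ onto a complement of $\g^f$ in $\mathfrak m_{\mathfrak l}^\perp=\bigoplus_{i\le 0}\g(i)\oplus[e,\mathfrak l']$ holds for every isotropic $\mathfrak l$ regardless of parity, and the paper places the genuinely super input in the cohomology vanishing instead.
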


Theorem~A implies that the algebra $\mc W_{\mf l}$ is independent of $\mf l$, and hence we shall freely use the notation $\mc W$ to denote it in the remainder of this section.

On the other hand, the analogue of Premet's definition of a finite $W$-algebra is 
\begin{equation}\label{eq:premet_def}
    U(\g,e)=\left(U(\g)\otimes_{U(\mathfrak m_{\bar{\mathfrak l}})}\mathbb C_\chi\right)^{\text{ad}\mathfrak m_{\bar{\mathfrak l}}}.
\end{equation}
Here, we denote by $\bar{\mathfrak l}$ a Lagrangian subspace of $\g(-1).$ Note that $\mathfrak m_{\bar{\mathfrak l}}=\mathfrak m_{\bar{\mathfrak l}}'$ when  $\dim\g(-1)={\rm r}$ is even. In this case, the construction reduces to the Gan--Ginzburg construction. However, this is no longer true when $\dim\g(-1)$ is odd, and it is not immediate that $U(\g,e)$ does not depend on the choice of $\bar{\mf l}$, see, e.g., \cite[Remark~3.11]{Zh14}. In this case, we show that there exists an odd element $\theta \in \bar{\mathfrak l}'\setminus \bar{\mathfrak l}$ such that
$
\mathfrak m_{\bar{\mathfrak l}}'=\mathfrak m_{\bar{\mathfrak l}}\oplus \mathbb{C}\theta.
$
One observes that the element $\theta \otimes 1 \in U(\g,e)$ generates a Clifford superalgebra, and it does not belong to $\mc W_{\ov{\mf l}} \cong\mc W$. Moreover, we prove the following theorem.

\begin{thmB}[Theorems \ref{thm::1} and \ref{thm::6}, Corollary \ref{cor::NonIso}]\label{thm:intro 2}
Suppose that $\rm r$ is odd. Then 
    \[U(\g,e) \simeq \mc W \otimes {\mc Cl}(1),\]
where ${\mc Cl}(1)$ is the Clifford algebra generated by one element. As a consequence, $U(\g,e)$ is independent of the choice of the Lagrangian subspace $\bar{\mathfrak l}$. Moreover, a comparison of the representation categories of $U(\g,e)$ and  $\mc W$ shows that the two algebras are not isomorphic.
\end{thmB}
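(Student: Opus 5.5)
Write $\bar{\mf l}$ for a Lagrangian subspace of $\g(-1)$ with ${\rm r}$ odd. The plan is to compare the two invariant subspaces of the single module $Q:=U(\g)\otimes_{U(\mf m_{\bar{\mf l}})}\C_\chi$, namely
\[U(\g,e)=Q^{\ad \mf m_{\bar{\mf l}}}\quad\text{and}\quad \mc W_{\bar{\mf l}}=Q^{\ad \mf m'_{\bar{\mf l}}}.\]
We use $\mf m'_{\bar{\mf l}}=\mf m_{\bar{\mf l}}\oplus\C\theta$. For this, first note that $\g(-1)_\oa$ is symplectic, so ${\rm r}$ odd forces $\dim\g(-1)_\ob$ to be odd. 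On the odd part the form $(e|[\cdot,\cdot])$ is symmetric and nondegenerate, so a maximal isotropic subspace has codimension one in its orthogonal. Hence we can pick an odd $\theta\in\bar{\mf l}'\setminus\bar{\mf l}$ with $\chi([\theta,\theta])=(e|[\theta,\theta])$ nonzero, and rescale it so that $(e|[\theta,\theta])=2$.

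\textbf{The Clifford element.} Let $\Theta:=\theta\otimes 1\in Q$. Since $[\mf m_{\bar{\mf l}},\theta]\subseteq \bar{\mf l}\oplus\bigoplus_{k\le-2}\g(k)$, and $\chi$ vanishes on the brackets that occur (because $\theta\in\bar{\mf l}'$ and $\chi$ is zero on $\g(k)$ for $k\le -3$), $\Theta$ is $\ad\mf m_{\bar{\mf l}}$-invariant, so $\Theta\in U(\g,e)$. Moreover $\Theta^2=\tfrac12[\theta,\theta]\otimes 1=1$. Thus $\Theta$ generates a copy of $\mc Cl(1)$ inside $U(\g,e)$.

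\textbf{The decomposition.} The action of $\ad\theta$ on $Q$ preserves $U(\g,e)$, because $[\theta,\mf m_{\bar{\mf l}}]\subseteq\mf m_{\bar{\mf l}}$. On $U(\g,e)$ this action is the supercommutator $u\mapsto \Theta u-(-1)^{|u|}u\Theta$, since $\theta$ acts on $\C_\chi$ trivially in the relevant sense. This identifies the piece we want:
\[\mc W_{\bar{\mf l}}=\{u\in U(\g,e): \Theta u=(-1)^{|u|}u\Theta\}.\]
The grading operator $u\mapsto(-1)^{|u|}\Theta u\Theta$ is an involutive algebra automorphism of $U(\g,e)$, and $\mc W_{\bar{\mf l}}$ is its fixed subalgebra. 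I then aim to show $U(\g,e)=\mc W_{\bar{\mf l}}\oplus\mc W_{\bar{\mf l}}\Theta$. Given $u\in U(\g,e)$, write
\[u=\tfrac12\bigl(u+(-1)^{|u|}\Theta u\Theta\bigr)+\tfrac12\bigl(u-(-1)^{|u|}\Theta u\Theta\bigr).\]
The first summand supercommutes with $\Theta$, so it lies in $\mc W_{\bar{\mf l}}$. The second summand anticommutes with $\Theta$ in the super sense, so multiplying it by $\Theta$ on the right lands in $\mc W_{\bar{\mf l}}$; using $\Theta^2=1$ we can then write it as an element of $\mc W_{\bar{\mf l}}$ times $\Theta$. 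The sum is direct because $\Theta\notin\mc W_{\bar{\mf l}}$.

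\textbf{The tensor product.} Since $\Theta$ supercommutes with $\mc W_{\bar{\mf l}}$ and $\Theta^2=1$, the multiplication map $\mc W_{\bar{\mf l}}\otimes\mc Cl(1)\to U(\g,e)$ is a homomorphism of superalgebras for the super tensor product, and by the decomposition it is bijective. Theorem~A then gives $\mc W_{\bar{\mf l}}\cong\mc W$, which yields the isomorphism and its independence of $\bar{\mf l}$.

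\textbf{Main obstacle: non-isomorphism.} To show $U(\g,e)\not\cong\mc W$, I would use the standard fact that $A\otimes\mc Cl(1)$ and $A$ have module categories related by a Clifford twist. For every finite-dimensional simple $U(\g,e)$-module $M$, the odd element $\Theta$ is an odd automorphism with $\Theta^2=1$, so $M\cong\Pi M$. I would then exhibit a finite-dimensional simple $\mc W$-module $V$ with $V\not\cong\Pi V$. The natural candidate is a simple quotient of $\mc W\to\C$ factoring through a one-dimensional representation, or more generally a module obtained via Skryabin's equivalence whose type (Q versus M) can be computed. If such a $V$ exists, the two algebras cannot be isomorphic, since an isomorphism would transport the property $M\cong\Pi M$ to all simples of $\mc W$. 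The hard part is producing $V$ in general. I expect it to require the later classification of simple modules through Whittaker modules, in order to find a simple $\mc W$-module of type M.
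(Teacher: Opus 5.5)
Your proof of the isomorphism and of independence from $\bar{\mf l}$ is correct and essentially matches the paper's (Theorems~\ref{thm::1} and~\ref{thm::6}). Your involution $u\mapsto(-1)^{|u|}\Theta u\Theta$ is a self-contained substitute for the decomposition $\ov x=[\ov x\ov\theta,\ov\theta]+[\ov x,\ov\theta]\ov\theta$, for which the paper cites an external result. One small fix: directness of $\mc W_{\bar{\mf l}}\oplus\mc W_{\bar{\mf l}}\Theta$ should be argued from the $\pm1$-eigenspaces of the involution, not just from $\Theta\notin\mc W_{\bar{\mf l}}$.

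The gap is in the non-isomorphism part, and its key claim is false. Left multiplication by $\Theta$ is not an odd automorphism of a $U(\g,e)$-module $M$. An odd module map $\phi$ must satisfy $\phi(am)=(-1)^{|a|}a\phi(m)$; for $a=\Theta$ this reads $\Theta^2m=-\Theta^2m$, which forces $m=0$. Since $\Theta$ supercommutes only with $\mc W_{\bar{\mf l}}$, what you actually get is an odd automorphism of the restriction $G(M)$, i.e.\ $G(M)\cong\Pi G(M)$. That is the Clifford twist of Theorem~\ref{thm::rethmC}, and it says nothing about the type of $M$ itself. In fact, whenever $\mc W$ has a simple $V$ of type \texttt{Q}, $F(V)$ splits into two simple $U(\g,e)$-modules of type \texttt{M}. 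For $\mf{osp}(1|2n)$ the strongly typical $\hat E_\eta$ of Corollary~\ref{cor:type:osp12n} are of type \texttt{M}, and both algebras have simples of both types. So producing a type \texttt{M} simple $\mc W$-module, which you leave open anyway, would prove nothing. Your ``natural candidate,'' a one-dimensional $\mc W$-module, would suffice, but for a different reason: an odd $\Theta$ with $\Theta^2=1$ cannot act on $\C^{1|0}$, so $U(\g,e)$ has no one-dimensional modules. However, the existence of such a $\mc W$-module is an open conjecture (Remark~\ref{res::fdim2}).

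Note also that $U(\g,e)\cong\mc W\otimes{\mc Cl}(1)$ alone cannot force non-isomorphism: a Clifford superalgebra $A$ on countably many generators satisfies $A\otimes{\mc Cl}(1)\cong A$. Some genuine input about $\mc W$ is needed, and the paper supplies it in two steps.
\begin{enumerate}
\item Finite-dimensional $U(\g,e)$-modules exist (Proposition~\ref{prop::FrobeniusExt}, obtained by inducing from $U(\g_\oa,e)$ using Lemma~\ref{lem::nilpotentlem} and Losev's existence theorem), hence so do finite-dimensional $\mc W$-modules.
\item For simple modules, $\dim G(X)=\dim X$ when $X$ is of type \texttt{M}, and $\dim G'(Y)=\frac12\dim Y$ when $Y$ is of type \texttt{Q}.
\end{enumerate}
Now suppose $\mc W\cong U(\g,e)$, and let $d$ be the common minimal dimension of finite-dimensional simples. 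A simple $U(\g,e)$-module $N$ with $\dim N=d$ cannot be of type \texttt{Q}, since then $G'(N)$ would have dimension $d/2$. So $N$ is of type \texttt{M}. Then $G(N)$ is a simple $\mc W$-module of type \texttt{Q} and dimension $d$. Transported back, it is a simple $U(\g,e)$-module of type \texttt{Q} and dimension $d$, contradicting the previous step. The invariant that distinguishes the two algebras is the type of the minimal-dimensional finite-dimensional simples, not the type of all simples.
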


   \subsubsection{} \label{sect::introThmC}
	The second part of this paper focuses on the representation theories of $U(\g,e)$ and $\mc W$.
     In particular, as a consequence of Theorem B, we deduce one of our main results, which both describes the relationship between the categories of $U(\g,e)$- and $\mc W$-modules and establishes an explicit bijection between irreducible $\mc W$-modules and generalized Whittaker modules over $\g$. To explain this part further, we provide more details on our setup below. In this paper, for a given associative superalgebra $A$, we let $A\text{-sMod}$ be the category of $A$-modules with even $A$-linear homomorphisms. Equipped with the parity-change functor $\Pi: A\text{-sMod} \to A\text{-sMod}$, the pair $(A\text{-sMod}, \Pi)$ naturally forms a {\em supercategory}. Corresponding to the supercategory $(A\text{-sMod}, \Pi)$, we can form a supercategory $(A\text{-sMod}^{\texttt{CT}}, \Pi^{\texttt{CT}})$ called its Clifford twist \cite[Section 2.1]{KKT16}; see Section \ref{sect::Cltw}.
    
We say that an irreducible $A$-module $X\in A\text{-sMod}$ is of {\em type $\texttt{Q}$} if $X$ and its parity-reversed module $\Pi X$ are isomorphic in $A\text{-sMod}$, and of {\em type $\texttt{M}$} otherwise. For any full subcategory $\mc C$ of $A\text{-sMod}$, we denote by $\mathrm{Irr}^{\texttt{M}}(\mc C)$ and $\mathrm{Irr}^{\texttt{Q}}(\mc C)$ the sets of isomorphism classes of simple objects of type $\texttt{M}$ and type $\texttt{Q}$ in $\mc C$, respectively.

Suppose that the dimension of $\g(-1)$ is odd. By virtue of the isomorphism $U(\g,e) \cong \mc W \otimes {\mc Cl}(1)$ from Theorem~B, and in analogy with the functors introduced in \cite{BK02,W09}, we consider the restriction functor  
$G(-)\colon U(\mathfrak{g},e)\text{-sMod} \to \mc W\text{-sMod}$, 
which assigns to each $U(\mathfrak{g},e)$-module $M$ the same underlying superspace with the action restricted to $\mc W$. 
The functor $G$ admits a left adjoint given by the induction functor
\begin{equation*}
    F(-) = U(\g,e)\otimes_{\mc W}-\colon \mc{W}\text{-sMod} \to U(\mf{g},e)\text{-sMod}.
\end{equation*}

           The following is our third main result:
      
        \begin{thmC}[Theorem \ref{thm::UWsimples}] \label{thm::WUgemodule}
        	There is an equivalence of supercategories  $$\mathbb E \colon (U(\g,e)\emph{-sMod}, \Pi) \to (\mc W\emph{-sMod}^{\textnormal{\texttt{CT}}},\Pi^{\textnormal{\texttt{CT}}}).$$          Furthermore,  $F$ and $G$ are superfunctors between $(\mc{W}\emph{-sMod}, \Pi)$  and  $(U(\g,e)\emph{-sMod}, \Pi)$,  
        	inducing two-to-one maps $$\emph{Irr}^{\textnormal{\texttt{M}}}(\mc{W}\emph{-sMod})\rightarrow \emph{Irr}^{\textnormal{\texttt{Q}}}(U(\g,e)\emph{-sMod})$$ and $$\emph{Irr}^{\textnormal{\texttt{M}}}(U(\g,e)\emph{-sMod})\rightarrow \emph{Irr}^{\textnormal{\texttt{Q}}}(\mc{W}\emph{-sMod}),$$ respectively.
	\end{thmC}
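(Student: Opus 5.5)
The plan is to take Theorem B as the starting point and argue entirely inside super-linear algebra. Theorem B gives $U(\g,e)\cong \mc W\otimes \mc Cl(1)$, with $\mc Cl(1)$ generated by an odd element $\theta$ satisfying $\theta^2=1$ after rescaling. Recall from \cite[Section 2.1]{KKT16} the structure of the Clifford twist of $(\mc W\text{-sMod},\Pi)$. Its objects are pairs $(X,\phi)$, where $X\in\mc W\text{-sMod}$ and $\phi\colon X\to \Pi X$ is an odd involution, i.e.\ an isomorphism with $\Pi\phi\circ\phi=\pm\mathrm{id}$, normalized as in \S\ref{sect::Cltw}. Morphisms are those commuting with $\phi$, and $\Pi^{\texttt{CT}}$ acts by $(X,\phi)\mapsto(\Pi X,-\Pi\phi)$ with the appropriate sign. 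Equivalently, these pairs are exactly modules over $\mc W\otimes\mc Cl(1)$: an action of $\theta$ commuting super-wise with $\mc W$ is the same as an odd $\mc W$-linear automorphism squaring to $1$.

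First I define $\mathbb E(M)=(G(M),\phi_M)$, where $\phi_M$ is the action of $\theta$ on $M$, viewed as an even $\mc W$-module map $G(M)\to\Pi G(M)$. I then check that this is compatible with the sign rules of \S\ref{sect::Cltw}; in particular $\theta w=(-1)^{|w|}w\theta$ must translate into $\Pi$-linearity. The quasi-inverse sends $(X,\phi)$ to $X$ with $\theta$ acting via $\phi$. Both functors are the identity on underlying morphisms, so $\mathbb E$ is fully faithful and essentially surjective. The natural isomorphism $\mathbb E\circ\Pi\cong\Pi^{\texttt{CT}}\circ\mathbb E$ is given by the canonical identity-on-vectors map, and the main thing to verify is that the sign twist in the definition of $\Pi^{\texttt{CT}}$ matches the sign convention $\theta\cdot \pi(v)=-\pi(\theta v)$ on $\Pi M$. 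The superfunctor structure of $F$ and $G$ is similarly routine: $G\Pi=\Pi G$ holds on the nose, and $F\Pi\cong\Pi F$ via $u\otimes\pi(x)\mapsto(-1)^{|u|}\pi(u\otimes x)$.

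The real content is the statement about simples. Since $U(\g,e)$ is free of rank $(1|1)$ over $\mc W$ with basis $1,\theta$, we get $F(X)=X\oplus\theta X$, and as a $\mc W$-module $G F(X)\cong X\oplus\Pi X$. Take $X$ simple of type \texttt{M}. I would show that $F(X)$ is simple: a proper $U(\g,e)$-submodule is a $\mc W$-submodule of $X\oplus\Pi X$, so since $X\not\cong\Pi X$ its only possible proper nonzero summands are $X$ or $\theta X$, and neither is $\theta$-stable. Moreover $F(X)$ is of type \texttt{Q}, because the odd automorphism $v\mapsto\theta v$ after a sign adjustment gives $F(X)\cong\Pi F(X)$. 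Also $F(X)\cong F(\Pi X)$. Conversely, for $N$ simple of type \texttt{Q} in $U(\g,e)$, I would use the adjunction together with a simple $\mc W$-submodule $X\subseteq G(N)$, whose existence needs an argument (for instance via finite generation or a Schur-type lemma) to get a nonzero map $F(X)\to N$. Then I show that $G(N)\cong X\oplus\Pi X$ with $X$ of type \texttt{M}, which gives surjectivity and shows that the fibres are exactly $\{X,\Pi X\}$; these are distinct precisely because $X$ is of type \texttt{M}. This establishes the first two-to-one map.

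For the second map, take $N$ simple of type \texttt{M} over $U(\g,e)$. By the equivalence $\mathbb E$, $G(N)$ carries an odd involution $\theta$. Any $\mc W$-submodule $Y$ of $G(N)$ gives a $U(\g,e)$-submodule $Y+\theta Y$, and this will force $G(N)$ to be simple of type \texttt{Q}. Here I expect the main obstacle: one has to exclude $G(N)\cong Y\oplus\theta Y$ with $Y$ simple, and this is where the type-\texttt{M} hypothesis on $N$ must enter, via the identification $F(Y)\cong N$ together with the first part, which would make $N$ of type \texttt{Q}. Finally, $G(N)\cong G(\Pi N)$ and $N\not\cong\Pi N$, so the map is two-to-one, and it is surjective because $F(Z)$ for $Z$ of type \texttt{Q} splits as $N\oplus\Pi N$ using the odd automorphism of $Z$ and $\theta$. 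Throughout, the infinite-dimensional case requires care only in the existence of simple submodules. I would handle this by working with modules whose simplicity is already given, relying on a super Schur lemma: the odd endomorphisms of a simple module form at most a one-dimensional space.
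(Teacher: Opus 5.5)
Your construction of $\mathbb E$ matches the paper's: Theorem \ref{thm::1} combined with Lemma \ref{lem::KKT27}, with the action of $\theta$ serving as the structure isomorphism. Your $\Pi^{\texttt{CT}}(X,\phi)=(\Pi X,-\Pi\phi)$ differs from the paper's $(X,-\varphi)$, but the two are isomorphic via $\varphi^{-1}$.

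For the two-to-one maps you take a more hands-on route than the paper. The paper just invokes \cite[Lemma 2.2]{KKT16}, a general fact about simples in a supercategory versus its Clifford twist. Alternatively it uses Lemma \ref{lem::bij}, whose real input is the explicit splitting of the counit, $FG(X)\cong X\oplus\Pi X$ via $x\mapsto\frac12(1\otimes x+\bar\theta\otimes\bar\theta x)$; together with $GF(Y)\cong Y\oplus\Pi Y$ this fixes lengths and types formally. You use only $GF(X)\cong X\oplus\Pi X$, a $\theta$-stability argument on submodules, and a super Schur lemma. Your route is self-contained and gives an explicit dictionary: $X$ is of type \texttt{M} iff $F(X)$ is simple, and $N$ is of type \texttt{M} iff $G(N)$ is simple. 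It does require Schur's lemma for possibly infinite-dimensional simples. That holds by Dixmier's argument, since everything is cyclic over an algebra of countable dimension, and the paper uses the same input implicitly through \cite{CW12}.

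Two steps in your sketch need repair.

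\emph{The odd automorphism of $F(X)$.} Left multiplication by $\theta$ is never an odd $U(\g,e)$-endomorphism, because it commutes rather than anticommutes with $\theta$. Twisting by the total-parity sign $(-1)^{|v|}$ fixes that defect but destroys linearity against odd elements of $\mc W$. The correct odd automorphism is right multiplication in the Clifford factor, $u\otimes x\mapsto(-1)^{|u|}u\theta\otimes x$.

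\emph{Existence of a simple $\mc W$-submodule of $G(N)$.} Finite generation yields simple quotients, not simple submodules. The clean argument is as follows. For any nonzero $\mc W$-submodule $Y\subseteq G(N)$, both $Y+\theta Y$ and $Y\cap\theta Y$ are $\theta$-stable, hence $U(\g,e)$-submodules of $N$. So either $G(N)$ is simple, or $G(N)=Y\oplus\theta Y$ with $Y$ simple by minimality. In the second case the multiplication map $F(Y)\to N$ is bijective, since $U(\g,e)=\mc W\oplus\theta\mc W$ as a right $\mc W$-module.

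Combine this dichotomy with two further facts. First, $F(Z)$ splits for $Z$ of type \texttt{Q}. Second, if $G(N)$ is simple and $J$ is an odd automorphism of $N$, then $J\circ\theta$ is an even $\mc W$-endomorphism, hence a scalar; this contradicts $J\theta=-\theta J$ unless $J=0$. With these, every step you marked ``I would show'' follows, including the exact fibres of both maps.
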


    \subsubsection{} \label{sect::123}
        Recall that a $\g$-module $M$ is a \emph{generalized Whittaker module} associated with $\chi$ if $x - \chi(x)$ acts locally nilpotently on $M$, for all $x \in \mf m$. Let $\g\text{-}\mathrm{Wmod}^{\chi}$ denote the category of all such modules. Recall further that the celebrated \emph{Skryabin equivalence} asserts that the \emph{Whittaker functor} provides an equivalence of supercategories
\[
    \operatorname{Wh}_\chi(-) \colon (\g\text{-}\mathrm{Wmod}^{\chi}, \Pi) \xrightarrow{\;\simeq\;} (U(\g,e)\text{-}\mathrm{sMod}, \Pi), 
\]
where for any generalized Whittaker module $M \in \g\text{-}\mathrm{Wmod}^{\chi}$. Recall that the space
\[
    \operatorname{Wh}_\chi(M) := \{ m \in M \mid x \cdot m = \chi(x)m \text{ for all } x \in \mf m \}
\]
is equipped with the induced $U(\g,e)$-action. Such an equivalence was originally established by Skryabin \cite{Skr02} in the context of Lie algebras, and was later generalized to the setting of Lie superalgebras in \cite{Zh14, ZS15, SX20}; see also Theorem~\ref{thm::Skr}.  Combining this with Theorem~C, we arrive at a classification of irreducible $\mc{W}$-modules:         
	\begin{cor}[Corollary \ref{thm::UWsimples}] \label{cor::1st}
			For each irreducible generalized Whittaker module $M \in \g\Wmod^\chi$, let $\widetilde{G}(M)$ be an irreducible submodule of $G(\operatorname{Wh}_\chi(M))$. Then, the sets \begin{align}\label{eq::introcoro}
            &\{\widetilde{G}(M)\mid M\in \emph{\text{Irr}}^{\textnormal{\texttt{M}}}(\g\Wmod^\chi)\}~\text{ and }~  \{\widetilde{G}(M)\mid M\in \emph{\text{Irr}}^{\textnormal{\texttt{Q}}}(\g\Wmod^\chi)\}
			\end{align}
			provide exhaustive lists of all irreducible $\mc W$-modules of type $\textnormal{\texttt Q}$ and type $\textnormal{\texttt M}$, respectively, up to parity change. 
	\end{cor}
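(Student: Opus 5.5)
The corollary should follow by composing Skryabin's equivalence with the two-to-one correspondences of Theorem~C. The case split is on the parity of $\mathrm r=\dim\g(-1)$; the substantive case is $\mathrm r$ odd.

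\textbf{Case $\mathrm r$ even.} Here $\mf m_{\bar{\mf l}}=\mf m'_{\bar{\mf l}}$, so $U(\g,e)=\mc W$ and $G$ is the identity. Then $\widetilde G(M)=\operatorname{Wh}_\chi(M)$, and Skryabin's equivalence preserves types, since it is a superequivalence commuting with $\Pi$. So in this case the intended matching is type-preserving, and the type-swapping statement is really the content of the odd case. I will treat the odd case as the substantive one.

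\textbf{Case $\mathrm r$ odd.} First, $\operatorname{Wh}_\chi$ is an equivalence of supercategories commuting with $\Pi$ (Theorem~\ref{thm::Skr}). It therefore gives bijections
\[
\mathrm{Irr}^{\texttt M}(\g\Wmod^\chi)\cong\mathrm{Irr}^{\texttt M}(U(\g,e)\text{-sMod}),\qquad
\mathrm{Irr}^{\texttt Q}(\g\Wmod^\chi)\cong\mathrm{Irr}^{\texttt Q}(U(\g,e)\text{-sMod}).
\]
Hence it suffices to analyse $G(V)$ for an irreducible $U(\g,e)$-module $V$, using $U(\g,e)\cong\mc W\otimes\mc Cl(1)$ from Theorem~B.

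Write $\theta$ for the odd Clifford generator, normalized so that $\theta^2=1$. It supercommutes with $\mc W$.

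\textbf{$V$ of type M.} I claim $G(V)$ is irreducible of type Q. For irreducibility, let $N\subset G(V)$ be a $\mc W$-submodule. Then $N+\theta N$ is $U(\g,e)$-stable, so it equals $V$. One checks that $N\cap\theta N$ is also $U(\g,e)$-stable, so it is $0$, giving $V=N\oplus\theta N$. The map $v\mapsto\theta v$ is then an odd isomorphism $N\cong\Pi\theta N$ of $\mc W$-modules (after the usual sign twist). I would derive a contradiction with type M by showing that $V$ would then be an induced module $F(N)$ admitting an odd automorphism. This is exactly what Theorem~C's map $\mathrm{Irr}^{\texttt M}(U(\g,e))\to\mathrm{Irr}^{\texttt Q}(\mc W)$ encodes, so I will invoke Theorem~C directly rather than redo it. Conversely, left multiplication by $\theta$ (with sign) is an odd $\mc W$-automorphism of $G(V)$, so $G(V)$ has type Q. In this case $\widetilde G(M)=G(V)$.

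\textbf{$V$ of type Q.} Take an odd automorphism $J$ of $V$ with $J^2=1$; this is possible by Schur's lemma for superalgebras, after rescaling. Then $\theta J$ is an even $\mc W$-linear involution, and its two eigenspaces are $\mc W$-submodules interchanged by $\theta$. Each eigenspace is irreducible of type M, and the two are isomorphic up to $\Pi$. So $\widetilde G(M)$, an irreducible submodule of $G(V)$, is of type M and is well defined up to parity change. The independence of the choice of irreducible submodule follows because the two eigenspaces differ by $\Pi$ via $\theta$.

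\textbf{Exhaustiveness.} Given an irreducible $\mc W$-module $X$, the module $F(X)=\mc Cl(1)\otimes X$ has an irreducible quotient $V$. By adjunction, $X$ embeds into $G(V)$. Hence $X\cong\widetilde G(M)$ up to $\Pi$, where $M$ is the Whittaker module with $\operatorname{Wh}_\chi(M)=V$. Injectivity up to parity is the two-to-one statement of Theorem~C, accounting for $V$ versus $\Pi V$.

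\textbf{Expected obstacle.} The main difficulty is bookkeeping of types under parity change. Specifically, one must check that the two-to-one maps of Theorem~C become bijections once lists are taken up to $\Pi$. This requires that type-M irreducibles $V\not\cong\Pi V$ map to the same $G(V)$, and that type-Q irreducibles yield a pair $X,\Pi X$. I would verify this with the explicit $\mc Cl(1)$-module structure.
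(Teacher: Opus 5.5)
Your proposal is correct and follows essentially the paper's route. Skryabin's equivalence (Theorem~\ref{thm::Skr}) preserves types, and the behaviour of $G$ on simple $U(\g,e)$-modules comes from $U(\g,e)\cong\mc W\otimes{\mc Cl}(1)$ via Theorem~\ref{thm::rethmC}; where the paper uses Lemma~\ref{lem::bij}, you use the $\theta J$-eigenspace splitting and an adjunction argument, and as you note the statement is only meant for $\mathrm r$ odd.

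There is one harmless slip. Since $J\theta=-\theta J$, your normalization $\theta^2=J^2=1$ gives $(\theta J)^2=-1$, so $\theta J$ is not an involution; take $J^2=-1$ instead, or use the $\pm i$-eigenspaces. Separately, you assert without argument that these eigenspaces are of type $\texttt{M}$. This follows from $V\cong F(V_\pm)$ for either eigenspace $V_\pm$, together with the fact that $F$ of a type-$\texttt{Q}$ simple module is not simple.
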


The final piece of motivation for the present paper is to study the finite-dimensional  modules over finite $W$-superalgebras $U(\g,e)$ and $\mc{W}$. In Proposition \ref{prop::FrobeniusExt}, we construct a natural adjoint pair $(\mathsf{Ind}, \mathsf{Res})$ of exact functors between the categories of finite-dimensional modules over $U(\g_\oa,e)$ and $U(\g,e)$, providing a categorical bridge between finite $W$-algebras and finite $W$-superalgebras.
In the special case $e=0$, the adjoint pair $(\mathsf{Ind}, \mathsf{Res})$ recovers the classical induction and restriction functors induced by the $(\g, \g_\oa)$-bimodule $U(\g)$.
Using this   framework, we prove the existence of finite-dimensional $U(\g,e)$- and $\mc{W}$-modules, and obtain a bijection between their simple modules; see Proposition  \ref{res::fdim}.

 \subsection{Organization} 
 This paper is organized as follows. In Section \ref{sect::finWS}, we first introduce basic notation and review constructions for the finite $W$-superalgebras $\mc{W}$ and $U(\mathfrak{g},e)$. We then establish Theorem A in Section \ref{sect::1}, showing the independence of $\mc{W}$ of the choice of the Lagrangian subspace $\mathfrak l$, and further prove in Theorem B in Section \ref{sect::PrDef} an isomorphism $U(\g,e) \cong \mc{W} \otimes {\mc Cl}(1)$. In Section \ref{sect::repW}, we study representations of $\mc{W}$ and $U(\g,e)$, as well as the generalized Whittaker modules over $\g$. 
 Based on the Skryabin equivalence, which we recall in Section \ref{sect::Skreq}, we establish in Section \ref{sect::322} the existence of an adjoint pair of exact functors between $U(\g,e)\sMod
 $ and $U(\g_\oa,e)\sMod$. In Section~\ref{sect::IrrWell}, we introduce the Clifford twist, complete the proof of Theorem C and establish that $U(\g,e)$ and $\mc{W}$ are not isomorphic, thereby completing the proof of Theorem B. In Section~\ref{sect::eg}, we provide several applications and detailed examples. In Section~\ref{sect::appendix}, we characterize finite-dimensional $U(\g,e)$-modules by exploiting  properties of the annihilator ideals of their images under the Skryabin equivalence. In Section~\ref{sect::341}, we show that the ghost center of $U(\g)$ embeds into the finite $W$-superalgebra $\mathcal{W}$. Finally, Section~\ref{sec:exam:osp} is devoted to a detailed analysis of the principal finite $W$-superalgebra of $\mf{osp}(1|2n)$.
  \vskip0.2cm
 {\bf Acknowledgments}. The first two authors are   partially supported by National Science and Technology Council grants of the R.O.C., and they further acknowledge support from the National Center for Theoretical Sciences. The third author is supported by National Research Foundation of Korea (NRF) Grant No. 2022R1C1C1008698.

\section{Finite $W$-superalgebras} \label{sect::finWS}
\subsection{The setup} Throughout this paper, all vector spaces and algebras are assumed to be over the complex field $\C$ and we let $\Z_2 = \{\bar{0}, \bar{1}\}$ denote the cyclic group of order two.  For a homogeneous element $v$ in a superspace, i.e., a $\Z_2$-graded vector space, we denote
its parity by $|v|\in \Z_2$. 

We fix   $\g=\g_\oa\oplus \g_\ob$  a {\em basic} Lie superalgebra from Kac’s list \cite{K77}: 
\begin{align}
&\mf{gl}(m|n), ~~ \mf{sl}(m|n), ~~ \mf{psl}(n|n), ~~ \mf{osp}(m|2n), ~~ D(2,1; \alpha), ~~ G(3), ~~F(4). \label{eq::Kaclist}
\end{align}
It follows that $\g_\oa$ is a reductive Lie algebra, and $\g_\ob$ is completely reducible as a $\g_\oa$-module under the adjoint action. Furthermore, $\g$ admits a non-degenerate $\g$-invariant supersymmetric bilinear form $(\_|\_)$. Fix an even non-zero nilpotent element $e \in \g_\oa$. Recall from \cite{EK05} that a {\em good grading} for $e$ is a $\Z$-grading $$\Gamma:~\mf{g} = \bigoplus_{k \in \Z} \mf{g}(k)$$ satisfying  $e\in \g(2)$, such that the adjoint action $\ad e: \g(k) \rightarrow \g(k+2)$ is injective for $k\leq -1$ and surjective for $k \geq-1$; see also \cite{H12} for a classification of good gradings for basic Lie superalgebras. 
 For any such a grading $\Gamma$, there exists $h\in \g(0)$ and $f\in\g(-2)$ such that $\{e, h, f\}\subset \g_{\bar{0}}$ form an $\mf{sl}(2)$-triple (see, e.g., \cite[Lemma~1.1]{EK05}, \cite[Lemma 25]{W11}). Such an $\mf{sl}(2)$-triple is referred to as a $\Gamma$-graded $\mf{sl}(2)$-triple. In particular, if $\Gamma$ coincides with the eigenspace decomposition of $\ad h$, then $\Gamma$ is called a {\em Dynkin grading} for $e$. While the Dynkin grading always exists by the Jacobson–Morozov theorem, there may exist other good gradings that are not Dynkin.

\label{sect::FiniteW}
\subsection{Gan--Ginzburg's isotropic subspace definition} \label{sect::1} 

Let $\Gamma:~\mf{g} = \bigoplus_{k \in \Z} \mf{g}(k)$  be a good grading for an even non-zero nilpotent element $e \in \g_\oa$.  Let $\chi(\_): = (e|\_): \g\rightarrow \C$. Then the subspace $\mf g(-1)$ is equipped with a super-symplectic form: $$\omega_\chi(v,w):= \chi([v,w]), ~\text{ for }v,w \in \g(-1).$$ Let $\mf l \subseteq \g(-1)$ be an isotropic subspace and $\mf l'\subseteq \mf g(-1)$ be the corresponding subspace orthogonal to $\mf l$ with respect to $\omega_\chi$. Define the nilpotent subalgebras $$\mf m = \mf m_{\mf l}: = \mf l \oplus \bigoplus_{k\leq -2}\g(k)  ~~~ \text{  and  } ~~~ \mf m'= \mf m_{\mf l}': = \mf l' \oplus \bigoplus_{k\leq -2}\g(k).$$ 
Denote by $U(\g)$ the universal enveloping algebra of $\g$. Let $Q_{\mf l} = U(\g)/ I_{\mf l},$ where $I_{\mf l}$ is the left ideal of $U(\g)$ generated by $a-\chi(a)$, for $a\in \mf m_{\mf l}$. We view $Q_{\mf l}$ as an $\mf m'$-module via the adjoint action of $\mf m'$. 
The construction of finite $W$-algebras in the sense of Gan–Ginzburg in \cite{GG02} admits a natural generalization to the case of Lie superalgebras (see also \cite{KRW03, KW04, DK06, DDCDS+06}):
\begin{align}
&\mc W_{\mf l}:=Q_{\mf l}^{\ad \mf m'} \equiv \{y+I_{\mf l}\mid [a,y] \in I_{\mf l},\text{ for }a\in \mf m'\}. 
\end{align} Then $\mc W_{\mf l}$ has the structure of an associative superalgebra inherited from that of $U(\g)$. The subalgebra $\mf m$  appearing in the definition of the finite $W$-superalgebra ${\mc W_{\mf l}}$ depends, a priori, on the choice of the isotropic subspace $\mf l$. Nevertheless, Theorem \ref{thm::GGrev} shows that the different choices of $\mf l$ yield isomorphic finite $W$-superalgebras.

Let $0= U_0(\g)  \subset U_1(\g) \subset U_2(\g)\subset \cdots$ be the standard PBW filtration on $U(\g)$. The adjoint action $\ad h$  induces a grading on each $U_n(\g)$ by 
\begin{align*}
&U_n(\g)(i) =\{x\in U_n(\g)\mid \ad h(x) = i x\}. 
\end{align*}  The Kazhdan filtration $\cdots \subset F_n U(\g) \subset F_{n+1} U(\g)\subset \cdots $ on $U(\g)$ is a $\Z$-filtration defined by letting 
\begin{align*}
&F_k U(\g) = \sum_{i+2j\leq k} U_j(\g)(i),~\text{ for }k\in \Z.
\end{align*} 
Denote the corresponding associated graded by $\text{gr}^KU(\g)$. Note that the filtration $F_kU(\g)$ induces a Kazhdan filtration $F_k Q_{\mf l}$ of $Q_{\mf l}$, and, furthermore, each $F_kU(\g)$ (and hence each $F_kQ_{\mf l}$) is $\text{ad}\mf m'$-invariant.
   Let $\kappa: \g \xrightarrow{\cong} \g^\ast$ be the isomorphism induced by the non-degenerate $\g$-invariant  bilinear form $(\_|\_)$. 
Let $\{e,h,f\}\subset \g_\oa$ be a $\Gamma$-graded $\mf{sl}(2)$-triple. Following the terminology of Gan and Ginzburg \cite{GG02}, the {\em Slodowy slice} of $\g$ associated with $e$ is defined as 
\begin{align*}
&\mc S = \kappa(e+\g^f)=\chi +\ker(\ad^\ast f), 
\end{align*}where $\g^f = \{x\in \g\mid[f,x]=0\}$ and $\ad^\ast f$ denotes the coadjoint action of $f$ (on $\g^*$). Set $\mf m^{\ast, \perp} = \kappa(\mf m^\perp)$, where $\mf m^\perp$ is the orthogonal complement of $\mf m$ in $\g$ with respect to $(\_|\_)$. We have that $\g^f\subseteq\bigoplus_{j\le 0}\g(j)\subseteq \mf m^\perp$ so that $\mc S\subseteq \chi+\mf m^{\ast,\perp}$. There are canonical isomorphisms $\gr^K U(\g)\cong \C[\g^\ast]$ and $\gr^K Q_{\mf l}=\gr^K U(\g) /\gr^K I_{\mf l} \cong \C[\chi+\mf m^{\ast, \perp}]$. The Kazhdan filtration on $Q_{\mf l}$ induces the Kazhdan filtration on $\mc W_{\mf l}$ via the inclusion $\mc W_{\mf l}\hookrightarrow Q_{\mf l}$. This induces  an injective homomorphism of the associated graded algebras $\gr^K \mc W_{\mf l}\hookrightarrow \gr^K Q_{\mf l}$. Therefore, we have the following commutative diagram: \\
\begin{displaymath}
\begin{tikzcd}
\operatorname{gr}^K U(\mathfrak{g}) \arrow[d] \arrow[r, equal] & \mathbb{C}[\mathfrak{g}^\ast] \arrow[d] \\
\operatorname{gr}^K Q_{\mathfrak{l}} \arrow[r, equal] & \mathbb{C}[\chi + \mathfrak{m}^{\ast, \perp}] \arrow[d] \\
\operatorname{gr}^K \mathcal{W}_{\mathfrak{l}} \arrow[u] \arrow[r, "\nu"] & \mathbb{C}[\mc S]
\end{tikzcd}
\end{displaymath}

\noindent where $\C[\g^\ast] \rightarrow \C[\chi+\mf m^{\ast, \perp}]\rightarrow \C[\mc S]$ are the restriction homomorphisms induced by the inclusions $\g^\ast  \supset \chi+\mf m^{\ast,\perp} \supset  \mc S $, and $\nu: \operatorname{gr}^K \mathcal{W}_{\mathfrak{l}} \to \mathbb{C}[\mc S]$ is the composition of the  natural maps:
\begin{equation*}
\operatorname{gr}^K \mathcal{W}_{\mathfrak{l}} \to \operatorname{gr}^K Q_{\mathfrak{l}}\xrightarrow{\cong}  \mathbb{C}[\chi + \mathfrak{m}^{\ast, \perp}] \to \mathbb{C}[\mc S].
\end{equation*}

The following theorem is a super-analogue of \cite[Theorem 4.1, Propositions~5.1,~5.2]{GG02}: 
\begin{thm} \label{thm::GGrev}
 We have
\begin{itemize}
    \item[(i)] The map $\nu$ is an isomorphism of graded associative superalgebras.  
    \item[(ii)] $\gr^K H^n(\mf m', Q_{\mf l})= H^n(\mf m', \gr^K Q_{\mf l}) = 0$ for any $n>0$. 
       \item[(iii)] For a fixed good grading for $e$, the finite $W$-superalgebras $\mc W_{\mf l}$ are all isomorphic for different choices of isotropic subspace $\mf l$. 
        \item[(iv)] Different choices of Dynkin gradings for $e$ yield isomorphic  $\mc W_{\mf l}$.
\end{itemize}
\end{thm}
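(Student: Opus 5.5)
Following \cite{GG02}, the approach is a Koszul-type spectral sequence argument on the associated graded level, adapted to the super setting.

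\textbf{Geometric input.} First establish the super analogue of the Kostant--Slodowy transversality isomorphism
\[
M' \times \mc S \xrightarrow{\ \sim\ } \chi+\mf m^{\ast,\perp},
\]
where $M'$ is the unipotent supergroup with Lie superalgebra $\mf m'$ acting by the coadjoint action. Since $\mf m'$ is a nilpotent Lie superalgebra, this is best phrased algebraically, as an isomorphism of $\mf m'$-module superalgebras
\[
\C[\chi+\mf m^{\ast,\perp}]\cong \C[M']\otimes \C[\mc S],
\]
with $\mf m'$ acting on $\C[M']$ by the regular (free) action, so $\C[M']\cong U(\mf m')^\ast$ is a coinduced module. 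The proof should copy \cite[Lemma 2.1]{GG02}. The map is equivariant for the contracting $\C^\times$-action built from the grading and $\operatorname{Ad}$ of the cocharacter, together with the Kazhdan scaling, and this action contracts both sides to $\chi$. It therefore suffices to check bijectivity on tangent spaces at the fixed point, i.e.
\[
\mf m^{\perp} = [\mf m',e]\oplus \g^f .
\]
Since $\g(k)$ for $k\le -2$ is paired with $\g(-k)$, this decomposition follows degreewise from $\g=[e,\g]\oplus\g^f$ (which holds for a good grading) together with the facts that $\ad e$ is injective in negative degrees and that $[\mf l',e]$ is exactly the annihilator of $\mf l$ inside $\g(1)$.

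This is the step I expect to be the main obstacle. When $\mathrm r$ is odd, $\mf l'\ne \mf l$ even for Lagrangian $\mf l$, so the dimension count must be checked carefully: $\dim \mf l'+\dim\mf l=\mathrm r$ has to match the dimension of the annihilator of $\mf l$, including the odd parts. One must also make sure that the odd coordinates of $M'$ do not spoil the contracting $\C^\times$ argument; for this I would argue purely algebraically, on graded supercommutative algebras with positive Kazhdan grading, using a graded Nakayama lemma.

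\textbf{Parts (i) and (ii).} Given this, Lie superalgebra cohomology gives
\[
H^n(\mf m',\gr^K Q_{\mf l})=H^n(\mf m',\C[M'])\otimes \C[\mc S],
\]
which equals $0$ for $n>0$ and $\C[\mc S]$ for $n=0$, because the regular module is coinduced (Shapiro's lemma). The Kazhdan filtration on the Chevalley--Eilenberg complex $\bigwedge^\bullet(\mf m')^\ast\otimes Q_{\mf l}$, understood in the super sense as the supersymmetric powers, is $\ad\mf m'$-stable, bounded below in each degree and exhaustive. The resulting spectral sequence converges, with $E_1$ page equal to the cohomology computed above. It is concentrated in degree $0$, hence degenerates, giving $\gr^K H^n(\mf m',Q_{\mf l})=0$ for $n>0$, which is (ii). In degree $0$ it gives $\gr^K\mc W_{\mf l}\cong(\gr^KQ_{\mf l})^{\mf m'}\cong\C[\mc S]$. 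Composing with restriction to $\mc S$, this isomorphism is exactly $\nu$, proving (i).

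\textbf{Parts (iii) and (iv).} For (iii), take isotropic subspaces $\mf l_1\subseteq\mf l_2$, so that $\mf m_1\subseteq\mf m_2$ and $\mf m_2'\subseteq\mf m_1'$. As in \cite[\S5]{GG02}, there is a natural map
\[
\mc W_{\mf l_1}\to \mc W_{\mf l_2}, \qquad y+I_{\mf l_1}\mapsto y+I_{\mf l_2},
\]
which is well defined because $I_{\mf l_1}\subseteq I_{\mf l_2}$. To see that its image is $\ad\mf m_2'$-invariant, use the vanishing in (ii) for $\mf l_1$, applied to the quotient $\mf m_1'\to \mf m_1'/\mf m_2'$ via a Hochschild--Serre argument. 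The map is filtered, and on associated graded it is compatible with the two maps $\nu$ to $\C[\mc S]$, hence is a graded isomorphism. Therefore the map itself is an isomorphism. Any two isotropic subspaces are connected through $0$, so all $\mc W_{\mf l}$ are isomorphic.

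For (iv), two Dynkin gradings come from $\mf{sl}(2)$-triples containing $e$. By the super version of Kostant's theorem, which uses only $\g_\oa$ and the even unipotent group $\exp\ad\g_\oa^e$, these triples are conjugate by an automorphism fixing $e$. That automorphism transports one construction to the other.
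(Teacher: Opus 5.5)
Your proposal is correct and follows essentially the same route as the paper. The paper's proof likewise rests on the transversality isomorphism $M'\times\mc S\cong\chi+\mf m^{\ast,\perp}$, the vanishing of $H^n(\mf m',\C[M'])$ for $n>0$, the Gan--Ginzburg filtration arguments for (i)--(iii), and Kostant's conjugacy theorem for (iv). The only real difference is how transversality is proved: you use the contracting-$\C^\times$/tangent-space argument of \cite{GG02}, while the paper points to \cite{Zh14} and sketches an $A$-point, degree-by-degree solution.

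Three small corrections. First, $\C[M']$ is the graded (restricted) dual of $U(\mf m')$, not $U(\mf m')^\ast$, so the vanishing should come from graded duality with $H_n(\mf m',U(\mf m'))=0$ rather than from Shapiro's lemma verbatim. Second, in (iii) no Hochschild--Serre step is needed: $\mf m_2'\subseteq\mf m_1'$ and $I_{\mf l_1}\subseteq I_{\mf l_2}$ make the image $\ad\mf m_2'$-invariant at once. Third, the dimension count you flag as the main obstacle is automatic, because the annihilator of $\mf l$ in $\g(1)$ is $[e,\mf l']$ by invariance of the form and bijectivity of $\ad e\colon\g(-1)\to\g(1)$.
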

\begin{proof}  We first fix a good grading $\Gamma$ and a $\Gamma$-graded $\mf{sl}(2)$-triple $\{e,h,f\}$ for $e$.
A similar result was established in \cite[Theorems 3.5, 3.6]{Zh14} for queer Lie superalgebras. As noted in \cite[Remark 3.11]{Zh14}, those results were expected to extend to basic classical Lie superalgebras with even ${\rm r}:=\dim\g(-1)$. Indeed, the arguments in \cite{Zh14} can be naturally adapted to our current setting of basic classical Lie superalgebras to prove Theorem \ref{thm::GGrev}  for any ${\rm r}$.    
 One ingredient for the proof is the fact that the analogue of \cite[Lemma 3.3]{Zh14} remains valid in our setting, namely,  the coadjoint action map 
\begin{align} \label{eq:GGdecom}
&\alpha: M'\times \mc S\rightarrow \chi+\mf m^{\ast, \perp}
\end{align}is an isomorphism of affine superschemes, where $M'$ is the unipotent algebraic supergroup whose Lie superalgebra is $\mathfrak{m}'$. This follows by a direct extension of the arguments in \cite[Lemma 3.3]{Zh14}, as the original proof remains valid in our more general setting without  modification.  
This decomposition yields an isomorphism $\C[\chi+\mf m^{\ast, \perp}]\cong \C[M']\times \C[\mc S]$ of associative superalgebras and of $M'$-modules, where $M'$ acts on $\C[M']\times \C[\mc S]$ via the action induced by left translation on $M'$.  It follows that 
\begin{align*}
&H^n(\mf m', \gr^K Q_{\mf l}) = H^n(\mf m', \C[\chi+{\mf m^{\ast,\perp}}]) = H^n(\mf m', \C[M'])\otimes \C[\mc S], \text{ for }n\geq 0.
\end{align*}  In particular, we have $H^0(\mf m', \gr^K Q_{\mf l})=(\gr^K Q_{\mf l})^{\mf m'} = \C[\mc S]$.

Here, we sketch an alternative proof of the decomposition \eqref{eq:GGdecom}, extending the approach used in the proof of \cite[Proposition 18]{Na23} where the case $\mf l=0$ was considered. First, consider the adjoint action of the affine supergroup $M'$ on $e+\g^f$ given by $$\alpha': M'\times (e+\g^f) \rightarrow e+\mf m^{\perp},~(g,e+X)\mapsto g(e+X)g^{-1},$$ for $g\in M'$ and $X\in \g^f$. It suffices to show that $\alpha'$ is an isomorphism of all $A$-valued points for any commutative associative $\C$-superalgebra $A$. Let  $e+x\in e+\g^f(A)$ and $g\in  M'(A)$. Since $M'$ is a unipotent affine algebraic supergroup, it follows by \cite[Corollary 5]{Na23} that there is $y= y_x\in \mf m'(A)$ such that 
\begin{align*}
&g(e+x)g^{-1} = \sum_{n\geq 0} \frac{1}{n!}\ad(y)^n(e+x),
\end{align*} see also the proof of \cite[Proposition 18]{Na23}.

To show the bijectivity of $\alpha'$, we follow the strategy of \cite[Lemma 3.13]{H22} which we sketch below. For any $r\in \g(A)$ we let $r= \sum_{i\in \Z}r_i$, where $r_i\in \g(i)(A)$. Note that $\mf m^\perp = \bigoplus_{i\leq 0}\g(i)\bigoplus [e, \mf l']$. Therefore, for any $z \in \mf m^\perp(A)$, the equality $e+z= g(e+x)g^{-1}$ is equivalent to the following equalities: 
\begin{align*} 
&z_k - x_k - [y_{k-2}, e] \\ &= \sum_{i+j=k} \operatorname{ad} y_i(x_j) + \sum_{n \ge 2} \frac{\sum_{i_1 + \dots + i_n = k-2} \operatorname{ad} y_{i_1} \dots \operatorname{ad} y_{i_n}(e)}{n!} \nonumber \\ &+ \sum_{n \ge 2} \frac{\sum_{i_1 + \dots + i_n + j = k} \operatorname{ad} y_{i_1} \dots \operatorname{ad} y_{i_n}(x_j)}{n!},
\end{align*} for any $k\leq 1$. By following the decreasing induction on $k$ and utilizing the decomposition $\g(i) = [e, \g(i-2)] \oplus \g^f(i)$ for all $i$, as presented in \cite[Lemma~3.13]{H22}, one can establish that for any $z \in \mf m^{\perp}(A)$, there exists a unique solution $(x, y)$ to the equation $e+z= g(e+x)g^{-1}$.

The crucial fact that facilitates this adaptation is the vanishing of the Lie superalgebra cohomologies $H^n(\mf m', \C[M'])$, for all $n \ge 1$, where $\mathbb{C}[M']$ denotes the coordinate ring of the unipotent algebraic supergroup $M'$; see, e.g., \cite[Proposition 6]{Na23} for more general statement.  
This addresses the concern raised in \cite[Remark 3.11]{Zh14} regarding the extension to cases where ${\rm r}$ is odd. 
This vanishing result, combined with the Gan-Ginzburg type arguments employed in \cite[Lemma 3.3, Theorems~3.5,~3.6]{Zh14} (see also \cite[Sections 5.3-5.5]{GG02}), is sufficient to establish conclusions (i), (ii), as well as the independence of ${\mc W}_{\mf l}$ from the choice of the isotropic subspace $\mathfrak{l}$.

Finally, we show that the finite $W$-superalgebras $\mathcal{W}_{\mathfrak{l}}$ defined via Dynkin gradings are all isomorphic to one another. To see this, let $\Gamma$ and $\Gamma'$ be two Dynkin gradings induced by two $\mf{sl}(2)$-triples $$\langle e,h,f\rangle~\text{ and }~\langle e,h',f'\rangle \hookrightarrow \g.$$ Denote by $\mc W_{0}^\Gamma$ and $\mc W_{0}^{\Gamma '}$ the finite $W$-superalgebras associated with these gradings, where we take the isotropic subspace $\mf l=0$. It suffices to show that $\mc W_{0}^\Gamma\cong \mc W_{0}^{\Gamma '}$. We note that $\langle e,h,f\rangle~\text{ and }~\langle e,h',f'\rangle$ lie in the same orbit under the action of the adjoint group of $\g$ by a classical result of Kostant \cite[Theorem 3.6]{Ko59}; see also \cite[Theorem~3.4.10]{CM93} and \cite[Section~2.6]{W11}. Since each automorphism $\exp(\ad X)$ of $\g_\oa$, for $X\in \g_\oa$ nilpotent, extends to an automorphism of $\g$, it follows that there is an automorphism $\sigma$ of $U(\g)$ that maps $\langle e,h,f\rangle$ to $\langle e,h',f'\rangle$. Consequently, $\sigma$ descends to an isomorphism from $\mc W^\Gamma_{0}$ to $\mc W^{\Gamma'}_{0}$. This completes the proof.
\end{proof}
\begin{rem}
Part (iii) of Theorem \ref{thm::GGrev} recovers \cite[Theorem 4.11]{ZS19}, where the isomorphism $\mc W_{\mf l}\cong \mc W_0$ was established specifically for the case where $\mf l$ is Lagrangian and the grading is Dynkin.\end{rem}

In light of Theorem \ref{thm::GGrev} the finite $W$-superalgebra $\mc W_{\mf l}$ is independent of the choice of the subspace $\mf l\subseteq\g(-1)$ chosen. Hence, in the sequel, we shall drop the subscript and also use $\mc W$ to denote this algebra.

\subsection{Premet's Whittaker model definition} \label{sect::PrDef} Let $e \in \mf{g}_0$ be an even nilpotent element and $\Gamma: \mf{g} = \bigoplus_{k \in \Z} \mf{g}(k)$ be a good grading for $e$. In this subsection, we recall the definition of finite $W$-superalgebras via the Premet construction. This approach generalizes Premet's original framework in \cite{Pr02} for Lie algebras. In the special case when the isotropic subspace $\mf l$ is Lagrangian and the dimension of $\g(-1)$ is even, this construction recovers the Gan–Ginzburg-type definition introduced in Section \ref{sect::1}.

Fix a Lagrangian, i.e., a maximal isotropic, subspace $\mf l\subseteq \g(-1)$ with respect to the supersymplectic form $\omega_{\chi}$ on $\g(-1)$.  Set $${{\rm r}:=\dim \g(-1).}$$ 
 Here we remark that $\dim \g(-1)_{\bar{0}}$ is always even and hence the parity of $\rm r$ is the same as the parity of $\dim \g(-1)_{\bar{1}}$.
 
\begin{ex}  Suppose that $\g$ is a Lie superalgebra of type I, that is, $$\g =\mf{gl}(m|n), ~~ \mf{sl}(m|n), ~~ \mf{psl}(n|n),~\mf{osp}(2|2n),$$ equipped with a Dynkin grading $\Gamma$ induced by an $\mf{sl}(2)$-triple $\{e,h,f\}$. Then, as $\g_\oa$-modules under the adjoint action, the odd part $\g_\ob$ decomposes as $\g_\ob \cong V \oplus V^*$, where $V^*$ denotes the dual module of $V$. Upon restriction to the $\mf{sl}(2)$-subalgebra $\mf{s} := \langle e,h,f \rangle$, both $V$ and $V^*$ become isomorphic as $\mf{s}$-modules due to the self-duality of finite-dimensional $\mf{sl}(2)$-modules. Consequently, $\mf{g}_\ob$ restricts to a direct sum of two isomorphic $\mf{s}$-modules, which implies that the integer $\rm r$ is even. \end{ex}
 		
 		  \begin{ex} \label{ex::7} Suppose that $\g$ is a basic Lie superalgebra of type II, namely, $$\g  =  \mf{osp}(m|2n) \text{ with } m \neq 2,~D(2,1;\alpha),~G(3),~F(4),$$  and let $\Gamma$ be the Dynkin grading on $\g$ induced by  an $\mf{sl}(2)$-triple $\{e,h,f\}$.  Let $\g^e$ denote  the centralizer of $e$. We further assume that $e$ is principal in  $\g_\oa$ (i.e., $\dim \g^e_\oa$ is equal to the rank of $\g_\oa$). According to \cite[Lemma 2.9, Corollary 2.10]{PS16}, the integer $\rm r$ is odd if and only if $\g$ is one of the following Lie superalgebras:
 		  	\[
 		  	\g = \begin{cases} 
 		  		\mathfrak{osp}(2m+1|2n) & \text{with } m < n; \\ 
 		  		\mathfrak{osp}(2m|2n) & \text{with } n < m; \\ 
 		  		D(2,1; \alpha), \ F(4). & 
 		  	\end{cases}
 		  	\]
 \end{ex}
 
 In the case when ${\rm r}$ is odd, we pick a vector $\theta\in \mf l'\setminus \mf l $  with $\omega_\chi(\theta,\theta)=1$  so that we have
 \begin{align*}
 	&\mf m':=\left\{ \begin{array}{ll} \mf m, &  \text{~for ${\rm r}$ even;} \\
 		\mf m \oplus \C \theta, &  \text{~~for ${\rm r}$ odd.} \end{array} \right. \label{eq::extm}
 \end{align*}  
We  define the finite $W$-algebra $U(\g,e)$ in the spirit of Premet \cite{Pr02}:
 \begin{align*}
&U(\g,e): = Q_{\mf l}^{\ad \mf m}\cong \text{End}_{\g}(Q_{\mf l})^{\text{opp}}.
 \end{align*}  
Thus, $Q_{\mf l}$ is a $\left( U(\g),U(\g,e)\right)$-bimodule. If ${\rm r}$ is even, then  $\mf l = \mf l'$, and hence, by definition, $U(\g,e)$ coincides with $\mc W_\mf l$. In the case when ${\rm r}$ is odd, the finite $W$-superalgebra is also referred to as the {\em refined finite $W$-superalgebra} and is studied in \cite{ZS15, ZS19}. The following shows that $U(\g,e)$ and $\mc W_{\mf l}$ are {\em Morita super equivalent} in the sense of \cite{W09}. 

 \begin{thm} \label{thm::1}
 Suppose that ${\rm r}$ is odd and $\mf l\subseteq \mf g(-1)$ is a Lagrangian subspace with respect to $\omega_\chi$. Then we have 
 \begin{align}
 & U(\g,e) \cong \mc W_{\mf l}\otimes {\mc Cl}(1),
 \end{align} where ${\mc Cl}(1)$ is the Clifford superalgebra generated by an odd element $\Theta$ subject to $[\Theta,\Theta]=1$. 
 \end{thm}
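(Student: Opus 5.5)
Write $\mf m=\mf m_{\mf l}$ and $\mf m'=\mf m\oplus\C\theta$ with $\theta\in\mf l'\setminus\mf l$ and $\omega_\chi(\theta,\theta)=1$, so that $\chi([\theta,\theta])=1$. The plan is to realize both $\mc W_{\mf l}$ and $\Theta$ inside $U(\g,e)=Q_{\mf l}^{\ad\mf m}$, and then show that the multiplication map $\mc W_{\mf l}\otimes{\mc Cl}(1)\to U(\g,e)$ is an isomorphism.

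\textbf{Step 1: the Clifford generator.} Set $\Theta:=\theta+I_{\mf l}$, up to a normalizing scalar. We check $\Theta\in U(\g,e)$. For $a\in\mf m$ we have $[a,\theta]\in\mf m$: indeed $[\mf l,\theta]\subseteq\g(-2)$ and $[\g(k),\theta]\subseteq\g(k-1)$, and all of these lie in $\mf m$. Hence $[a,\theta]\equiv\chi([a,\theta])$ modulo $I_{\mf l}$. For $a\in\g(\le -2)$, $\chi([a,\theta])=0$ by degree. For $a\in\mf l$, $\chi([a,\theta])=\omega_\chi(a,\theta)=0$ because $\theta\in\mf l'$. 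Next, $\Theta^2=\tfrac12[\theta,\theta]$ and $[\theta,\theta]\in\g(-2)\subseteq\mf m$, so $\Theta^2=\tfrac12\chi([\theta,\theta])=\tfrac12$. Thus $\Theta$ (suitably normalized) generates a copy of ${\mc Cl}(1)$ in $U(\g,e)$.

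\textbf{Step 2: decomposing $U(\g,e)$.} Since $\theta$ normalizes $\mf m$, the operator $\ad\theta$ preserves $I_{\mf l}$ and commutes with the $\ad\mf m$-invariants. So $D:=\ad\Theta$ is an odd operator on $U(\g,e)$ with $D^2=\ad(\Theta^2)=0$. By definition, $\mc W_{\mf l}=\ker D$. We want $U(\g,e)=\mc W_{\mf l}\oplus\Theta\mc W_{\mf l}$. Given $y\in U(\g,e)$, define
\[
y_0:=y-\Theta\,[\Theta,y]\quad(\text{normalized so that }\Theta^2=1).
\]
Compute $[\Theta,\Theta[\Theta,y]]=[\Theta,\Theta][\Theta,y]-\Theta[\Theta,[\Theta,y]]$. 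Using $[\Theta,\Theta]=2\Theta^2$ and $[\Theta,[\Theta,y]]=0$, this should give $[\Theta,y_0]=0$ after adjusting the constants. Then $y=y_0+\Theta y_1$ with $y_1:=[\Theta,y]$, and $y_1\in\ker D$ because $D^2=0$. Uniqueness comes from applying $D$: if $y_0+\Theta y_1=0$ with $y_0,y_1\in\ker D$, then $D(\Theta y_1)=[\Theta,\Theta]y_1=y_1\ne0$ unless $y_1=0$. The signs and normalizations need care in the super setting.

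\textbf{Step 3: tensor structure.} $\Theta$ supercommutes with $\mc W_{\mf l}=\ker D$, since for such $w$ we have $[\Theta,w]=0$. So the map $w\otimes c\mapsto wc$ is a superalgebra homomorphism $\mc W_{\mf l}\otimes{\mc Cl}(1)\to U(\g,e)$ for the super tensor product, and Step 2 shows it is bijective.

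\textbf{Main obstacle.} The delicate point is making Step 2 precise. This includes verifying that $\ad\theta$ genuinely preserves $Q_{\mf l}^{\ad\mf m}$, and handling the super signs in the splitting $y\mapsto(y_0,y_1)$. If the direct operator argument becomes messy, the fallback is to pass to Kazhdan-graded objects using Theorem \ref{thm::GGrev}: there $\gr^K U(\g,e)$ sits in $\C[\chi+\mf m^{\ast,\perp}]$, the extra odd coordinate is $\theta$, and one compares it with $\C[\mc S]\otimes\Lambda(\theta)$ before lifting.
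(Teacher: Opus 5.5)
Your proposal is correct and follows essentially the same route as the paper. The paper also maps $\ov y\otimes\Theta\mapsto\ov y\,\ov\theta$, proves injectivity by applying $[\theta,-]$, and proves surjectivity via $\ov x=[\ov x\ov\theta,\ov\theta]+[\ov x,\ov\theta]\ov\theta$; for the membership of these two terms in $\mc W_{\mf l}$ it cites \cite[Proposition 4.9]{ZS15}, whereas you get it directly from $D^2=0$.

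One correction to your constants: with the natural choice $\Theta=\ov\theta$ you have $[\Theta,\Theta]=1$, and then your formula $y_0=y-\Theta[\Theta,y]$ works exactly. Under your stated normalization $\Theta^2=1$ it would instead need a factor $\tfrac12$. Either way the splitting is sound, and the Kazhdan-filtration fallback is unnecessary.
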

 \begin{proof}  
 For each $x\in U(\g)$ and $a\in \mf m'$, we write $\ov x = x+I_{\mf l}$ and $[a,\ov x] = \ov{[a,x]}$. In what follows, we view $\mc W_{\mf l}$ as a subalgebra of $U(\g,e)$. Let $\ov 1$ be the identity of $U(\g,e)$. Note that $\ov \theta\in U(\g,e) \setminus \mc W_{\mf l}$, since $[\theta,\ov \theta] = \omega_\chi(\theta,\theta) \ov 1= \ov 1\in  Q_{\mf l}^{\ad \mf m}$ and $\ov \theta$ generates a subalgebra of $U(\g,e)$ isomorphic to ${\mc Cl}(1)$ via $\theta\mapsto \Theta$.  Define a linear  map $\psi: \mc W_{\mf l} \otimes {\mc Cl}(1) \rightarrow U(\g, e)$ by letting 
 \begin{align*}
&\psi\colon~\ov y\otimes 1\mapsto \ov y, ~\ov y\otimes\Theta\mapsto \ov{y} \ov{\theta},~\text{ for all } \ov y\in U(\g,e).
 \end{align*}Since $ [\ov \theta,\ov y]= [\theta,\ov y] =0$ in $U(\g,e)$, for any $\ov y\in W_{\mf l}$, it follows that $\psi$ is a well-defined superalgebra homomorphism.  We shall show that $\psi$ is an isomorphism. To see this, let $\ov y_1, \ov y_2 \in \mc W_{\mf l}$ be homogeneous elements such that $\ov y_1\otimes \Theta+ \ov y_2\otimes 1\in \mc W_{\mf l}\otimes{{\mc Cl}(1)}$ lies in the kernel of $\psi$. Then $\ov y_1 \ov \theta+\ov y_2 =0$ in $U(\g,e)$. We calculate the following  equalities of elements in $U(\g,e)$: 
 \begin{align*}
 0&=[\theta, \ov y_1 \ov \theta+\ov y_2] = [\theta, \ov y_1] \ov \theta+ (-1)^{|y_1|} \ov y_1 [\ov \theta,\ov \theta] +[\theta,\ov y_2] \\ &= (-1)^{|y_1|} {\ov y_1} [\ov \theta,\ov \theta] = (-1)^{|y_1|}\ov y_1, 
 \end{align*} since $[\theta,\ov y_1] = [\theta,\ov y_2]=0$ and $[\ov \theta,\ov{\theta}] =\ov 1$. This implies that $\ov y_1 = \ov y_2 =0$. Therefore, $\psi$ is injective. To show the surjectivity of $\psi$, we may note that 
 \begin{align*}
  &\ov x=[\ov{x}\ov \theta, \ov \theta]+[\ov x,\ov \theta]\ov \theta, ~\text{ for any }~\ov x\in U(\g,e).
 \end{align*} since $[\ov \theta, \ov \theta]=\ov 1$. By \cite[Proposition 4.9]{ZS15}, we have $[\ov{x}\ov \theta, \ov \theta],~[\ov x,\ov \theta]\in {\mc W_{\mf l}}$.
 Since $\psi(\mc W_{\mf l}\otimes 1) =\mc W_{\mf l}$, it follows that $\psi$ is surjective. Alternatively, we can prove that the inverse of $\psi$ is given by  the following linear map
 \begin{align*}
&\phi: U(\g,e)\rightarrow \mc W_{\mf l}\otimes {{\mc Cl}(1)}, ~\phi(\ov x):= [\ov x\ov \theta, \ov \theta]\otimes 1 +[\ov x,\ov \theta]\otimes \Theta, \text{ for any }\ov x \in U(\g,e).
 \end{align*} This completes the proof.
 \end{proof}

\begin{thm} \label{thm::6}
For a fixed good grading for $e$, the finite $W$-superalgebras $U(\g,e)$ are all isomorphic for different choices of the Lagrangian subspace $\mf l$. Furthermore, the isomorphism class of $U(\g,e)$ defined via Dynkin gradings remains independent of the specific choice of $\Gamma$. 

\end{thm}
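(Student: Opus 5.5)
The plan is to reduce everything to Theorem \ref{thm::GGrev} by way of Theorem \ref{thm::1}, after splitting according to the parity of ${\rm r}=\dim\g(-1)$.

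\emph{Case ${\rm r}$ even.} Every Lagrangian $\mf l$ satisfies $\mf l=\mf l'$. Hence $\mf m=\mf m'$, and by definition $U(\g,e)=Q_{\mf l}^{\ad\mf m}=\mc W_{\mf l}$. Theorem \ref{thm::GGrev}(iii) then shows that the isomorphism class is independent of the Lagrangian $\mf l$, for a fixed good grading.

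\emph{Case ${\rm r}$ odd.} For each Lagrangian $\mf l$, Theorem \ref{thm::1} gives $U(\g,e)\cong\mc W_{\mf l}\otimes{\mc Cl}(1)$. By Theorem \ref{thm::GGrev}(iii), $\mc W_{\mf l}\cong\mc W_{\mf l_0}$ for any other isotropic $\mf l_0$, in particular for another Lagrangian subspace. Tensoring this superalgebra isomorphism with the identity on ${\mc Cl}(1)$ gives an isomorphism of superalgebras $\mc W_{\mf l}\otimes{\mc Cl}(1)\cong\mc W_{\mf l_0}\otimes{\mc Cl}(1)$. The isomorphism must be even so that the super tensor product structure is respected; the isomorphisms produced in Theorem \ref{thm::GGrev} are filtered and even, so this is fine. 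This proves the first assertion.

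For the second assertion, fix two Dynkin gradings $\Gamma,\Gamma'$ coming from $\mf{sl}(2)$-triples $\langle e,h,f\rangle$ and $\langle e,h',f'\rangle$. These have the same $\dim\g(-1)$: as in the proof of Theorem \ref{thm::GGrev}(iv), there is an automorphism $\sigma$ of $\g$, coming from the adjoint group of $\g_\oa$ (Kostant), with $\sigma(e)=e$ and $\sigma(h)=h'$. Such a $\sigma$ maps $\g(k)$ onto $\g'(k)$, so the parity of ${\rm r}$ agrees for the two gradings.

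Two routes are then available.
\begin{itemize}
\item Combine the previous cases with Theorem \ref{thm::GGrev}(iv), which gives $\mc W^\Gamma\cong\mc W^{\Gamma'}$, and with Theorem \ref{thm::1} in the odd case.
\item Argue directly with $\sigma$. Because $\sigma$ preserves the form $(\_|\_)$ and fixes $e$, it also preserves $\chi$ and $\omega_\chi$. It therefore sends a Lagrangian $\mf l\subseteq\g(-1)$ to a Lagrangian $\sigma(\mf l)\subseteq\g'(-1)$, and sends $\mf m_{\mf l}$ to $\mf m'_{\sigma(\mf l)}$ and $I_{\mf l}$ to $I_{\sigma(\mf l)}$. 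Extending $\sigma$ to $U(\g)$, it induces $Q_{\mf l}\cong Q_{\sigma(\mf l)}$ and an isomorphism of the $\ad\mf m$-invariants.
\end{itemize}
Either route, combined with the first assertion for the Lagrangian $\sigma(\mf l)$, completes the proof.

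The main obstacle is the step in the direct route where $\sigma$ must preserve the form and $\chi$. Invariance of the bilinear form under the extended automorphisms $\exp(\ad X)$ holds because $X\in\g_\oa$ is nilpotent and the form is invariant; one only needs to check that the extension of $\exp(\ad X)$ to $\g$ is $\exp(\ad_\g X)$.
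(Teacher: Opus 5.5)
Your proposal is correct and follows the paper's proof: combine Theorem \ref{thm::1} with parts (iii) and (iv) of Theorem \ref{thm::GGrev}, with the even case immediate because $U(\g,e)=\mc W_{\mf l}$ by definition there. Your check, via the Kostant automorphism $\sigma$, that two Dynkin gradings have the same $\dim\g(-1)$ supplies a step the paper leaves implicit, and your direct $\sigma$-conjugation route is a valid alternative for the second assertion.
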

\begin{proof}  Fix a good grading $\Gamma$ and a Lagrangian subspace $\mf l\subseteq \g(-1)$. By Part~(iii) of Theorem \ref{thm::GGrev}  and Theorem \ref{thm::1}, we have isomorphisms 
 $$U(\g,e) \cong \mc W_{\mf l}\otimes {\mc Cl}(1) \cong \mc W\otimes {\mc Cl}(1).$$ This implies that $U(\g,e)$ is independent of the choice of $\mf l$, up to isomorphism. 
The second assertion, regarding the independence of the Dynkin grading, follows analogously by using both Parts (iii) and (iv) of Theorem \ref{thm::GGrev} and Theorem \ref{thm::1}. This completes the proof.
\end{proof}

Based on the results of the preceding two sections, we conclude that for a fixed good grading $\Gamma$, there exist at most two isomorphism classes of finite $W$-superalgebras  associated to $\Gamma$, namely those of $U(\mf{g}, e)$ and $\mc W$. 
In the following section, we shall prove that these two associative superalgebras are indeed non-isomorphic in the case when ${\rm r}$ is odd.

 A Lie superalgebra appearing in Kac's list \eqref{eq::Kaclist} is called \textit{exceptional} if it is isomorphic to $D(2,1; \alpha)$, $G(3)$, or $F(4)$.
It follows from \cite[Theorem 5.1]{H12} that  all good gradings for an even nilpotent element $e$ in an exceptional Lie superalgebra are Dynkin gradings.  As a consequence of Theorem \ref{thm::6}, we obtain the following corollary:
\begin{cor} Suppose that $\g$ is exceptional. Then  the isomorphism class of the finite W-algebra $U(\g,e)$ is independent of the choice of the good grading $\Gamma$.
\end{cor}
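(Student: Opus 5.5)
The plan is to combine Theorem \ref{thm::6} with the classification of good gradings for exceptional Lie superalgebras. By \cite[Theorem 5.1]{H12}, every good grading $\Gamma$ for an even nilpotent $e$ in an exceptional $\g$ is a Dynkin grading. It therefore suffices to show that $U(\g,e)$ is independent of the choice of Dynkin grading $\Gamma$. We also need independence of the Lagrangian $\mf l$, but for a fixed $\Gamma$ this is the first assertion of Theorem \ref{thm::6}.

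The key step is the second assertion of Theorem \ref{thm::6}, which we unpack as follows. Let $\Gamma,\Gamma'$ be two good gradings for $e$, and let $\mf l\subseteq\g(-1)$, $\mf l_1\subseteq\g'(-1)$ be Lagrangians. Both gradings are Dynkin, so they come from $\mf{sl}(2)$-triples $\langle e,h,f\rangle$ and $\langle e,h',f'\rangle$.

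We split according to the parity of $\mathrm r=\dim\g(-1)$:
\begin{itemize}
\item[(a)] If $\mathrm r$ is even, then $U(\g,e)=\mc W_{\mf l}$, and Theorem \ref{thm::GGrev}(iii),(iv) give $\mc W^{\Gamma}_{\mf l}\cong\mc W^{\Gamma}_0\cong\mc W^{\Gamma'}_0\cong\mc W^{\Gamma'}_{\mf l_1}$.
\item[(b)] If $\mathrm r$ is odd, Theorem \ref{thm::1} gives $U(\g,e)\cong\mc W_{\mf l}\otimes{\mc Cl}(1)$. We then tensor the isomorphism from (a) with ${\mc Cl}(1)$.
\end{itemize}

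One point needs checking: the parity of $\dim\g(-1)$ is the same for $\Gamma$ and $\Gamma'$. Otherwise case (a) would hold for one grading and case (b) for the other. For Dynkin gradings this is automatic. The two triples are conjugate by an automorphism $\sigma$ of $\g$ (Kostant's theorem, as used in the proof of Theorem \ref{thm::GGrev}(iv)). Since $\sigma(h)=h'$, $\sigma$ maps the $\ad h$-eigenspaces to the $\ad h'$-eigenspaces, and in particular $\g(-1)$ onto $\g'(-1)$. Also $\sigma(e)=e$, so $\sigma$ preserves $\chi$, and hence it preserves $\omega_\chi$ up to the identification $\g(-1)\cong\g'(-1)$. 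Consequently $\sigma$ carries a Lagrangian $\mf l$ to a Lagrangian $\sigma(\mf l)$ and $\mf m_{\mf l}$ onto $\mf m_{\sigma(\mf l)}$.

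With this in hand, there is a more direct route: $\sigma$ induces an isomorphism $Q_{\mf l}\to Q_{\sigma(\mf l)}$ that intertwines the $\ad\mf m$ and $\ad\sigma(\mf m)$ actions. This gives $U(\g,e)^{\Gamma}\cong U(\g,e)^{\Gamma'}$ without passing through $\mc W$.

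The only real obstacle is thus the input from \cite{H12} that all good gradings are Dynkin in the exceptional cases. Granting it, the corollary is an immediate combination of the cited classification with Theorem \ref{thm::6}.
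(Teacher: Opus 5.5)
Your proposal is correct and follows the paper's argument, which combines \cite[Theorem 5.1]{H12} (every good grading in the exceptional cases is Dynkin) with the second assertion of Theorem \ref{thm::6}. Your extra check that $\dim\g(-1)$ has the same parity for the two Dynkin gradings (because $\sigma(h)=h'$), and your direct transport of $Q_{\mf l}$ along $\sigma$, spell out a point the paper leaves implicit in the proof of Theorem \ref{thm::6}.
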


\section{Representations of the finite $W$-superalgebra $\mc W_{\mf l}$ and Whittaker modules over $\g$} \label{sect::repW}

In the remainder of this article, unless mentioned otherwise, we fix an even nilpotent element $e\in \g_\oa$, a good grading $\Gamma$ for $e$ and a Lagrangian subspace $\mf l\subset \g(-1)$. Let $U(\g,e)$ and $\mc W({=}\mc W_{\mf l})$ denote the corresponding finite W-superalgebras.
{The goal of this section is to provide a description of irreducible representations of the finite $W$-superalgebras $\mc W$ in terms of generalized Whittaker modules over $\g$, which we shall explain in subsection~\ref{sect::Skreq}.}

\subsection{Categories of modules over superalgebras}\label{subsec:smodules}
 Throughout this paper, all modules over an associative superalgebra $A = A_\oa \oplus A_\ob$ are assumed to be $\Z_2$-graded, that is, an $A$-module $M$ decomposes as $M = M_\oa \oplus M_\ob$ satisfying $A_i M_j \subseteq M_{i+j}$ for all $i, j \in \Z_2$. 
  A homogeneous homomorphism $f: M \to N$ of $A$-modules is a linear map satisfying $f(M_i) \subseteq N_{i+\vert{}f\vert{}}$ for all $i \in \mathbb{Z}_2$, and $$f(a \cdot m) = (-1)^{\vert{}f\vert{} \vert{}a\vert{}} a \cdot f(m)$$for all homogeneous elements $a \in A$ and $m \in M$, where $f$ is called even if $\vert{}f\vert{} = \bar{0}$ and odd if $\vert{}f\vert{} = \bar{1}$.  We let $A\text{-sMod}$ denote the category of left $A$-modules whose morphisms are restricted to even homomorphisms. We have the parity change functor $\Pi: A\sMod\rightarrow  A\sMod$ with $\Pi^2$ being isomorphic to the identity functor $\text{Id}$ of $A\text{-sMod}$. It is defined on
 $M\in A$-sMod by declaring that $\Pi M$ has the same underlying vector space as $M$ but with the opposite $\Z_2$-grading, viewed as an object in $A$-sMod with the new action given by $a\cdot m:= (-1)^{|a|}am$.

\subsection{Skryabin equivalence}  \label{sect::Skreq} In this section, we allow ${\rm r}$ to be an arbitrary non-negative integer.
\subsubsection{} 
A $\mf g$-module $M$ is called a generalized Whittaker module if $a -\chi(a)$ acts on $M$ locally nilpotently,
for any $a\in \mf m$.  Let $\g\text{-Wmod}^{\chi}$ denote the full subcategory of $U(\g)\text{-sMod}$ consisting of all generalized Whittaker modules.

Let $M$ be a Whittaker module. A Whittaker vector in $M$ is a vector $v\in M$ satisfying $(a-\chi(a))v=0$, for $a\in \mf m$. We denote the subspace of Whittaker vectors in $M$ by $\text{Wh}_\chi(M)$. This defines the {\em Whittaker functor}  $$\text{Wh}_\chi(\_):  \g\text{-Wmod}^{\chi}\rightarrow U(\g,e)\text{-sMod},~~M\mapsto \text{Wh}_\chi(M).$$
We define  another functor $${\rm Sk}(\_): U(\g,e)\text{-sMod} \rightarrow \g\text{-Wmod}^{\chi} ,~~V\mapsto Q_{\mf l}\otimes_{U(\g,e)}V.$$ 

The following theorem was originally established by Skryabin   \cite{Skr02} for reductive Lie algebras, and was   generalized to the Lie superalgebra setting  in \cite{Zh14, ZS15, SX20}:

\begin{thm}[Skryabin equivalence] \label{thm::Skr}  The functors ${\rm Wh}_\chi$ and  ${\rm Sk}$ form mutually inverse equivalences between $\g\text{-}{\rm Wmod}^{\chi}$ and $U(\g,e)\sMod$.
\end{thm}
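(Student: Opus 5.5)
The plan is to follow Skryabin's original argument (in the form of Gan--Ginzburg's appendix), adapted to the super setting. The first step is to show that $Q_{\mf l}$ is free as a right $U(\g,e)$-module. Concretely, I will construct a PBW-type basis via the Kazhdan filtration. Pick a graded complement $\mf a$ of $\mf m^\perp$-type, namely a subspace $\mf p\subseteq\g$ with $\g=\mf m'\oplus\mf p$ compatible with the grading, where $\mf m'=\mf m\oplus\C\theta$ in the odd case. Using Theorem \ref{thm::GGrev}(i),(ii) and the decomposition \eqref{eq:GGdecom}, one gets $\gr^K Q_{\mf l}\cong \C[M']\otimes\C[\mc S]$. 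Lifting, in the even case this yields $Q_{\mf l}\cong U(\mf m)\otimes U(\g,e)$, free over $U(\g,e)$. In the odd case the extra factor is absorbed by $\ov\theta\in U(\g,e)$ via Theorem \ref{thm::1}, since then $\C[\mc S]\otimes\C[\theta]$ is the graded version of $U(\g,e)$.

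Next I would prove that every $M\in\g\text{-Wmod}^\chi$ with $M\neq 0$ has $\mathrm{Wh}_\chi(M)\neq 0$. This holds because $\mf m$ is a nilpotent Lie superalgebra and the operators $a-\chi(a)$ act locally nilpotently, so Engel's theorem for finite-dimensional $\mf m$-submodules gives a common Whittaker vector. Odd elements are handled because their squares lie in the even part of $\mf m$.

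The core step is to show that, for every $M$, the multiplication map $Q_{\mf l}\otimes_{U(\g,e)}\mathrm{Wh}_\chi(M)\to M$ is an isomorphism, and that $V\to\mathrm{Wh}_\chi(Q_{\mf l}\otimes_{U(\g,e)}V)$, $v\mapsto \ov1\otimes v$, is an isomorphism. For the second map, freeness lets me write $Q_{\mf l}\otimes_{U(\g,e)}V\cong U(\mf m_{\mathfrak p})\otimes V$ as an $\mf m$-module, where $\mf m_{\mathfrak p}$ is a complementary piece dual to $\mf m$ under $\chi$. Filtering by Kazhdan degree reduces the question to showing that $H^0(\mf m,\cdot)$ of the associated graded is $V$ and that $H^1$ vanishes. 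This is Theorem \ref{thm::GGrev}(ii), applied through the identification of the graded module with $\C[M]\otimes V$ together with the vanishing $H^{n}(\mf m,\C[M])=0$ for $n>0$. For the first map, injectivity follows by applying $\mathrm{Wh}_\chi$ to the kernel: the kernel is a Whittaker module whose Whittaker vectors, by the previous isomorphism and left exactness, must be zero, so the kernel vanishes by the Engel step. Surjectivity then follows by considering the cokernel $C$. By exactness of $\mathrm{Wh}_\chi$ on Whittaker modules (from $H^1(\mf m,\cdot)=0$ on the relevant modules) we get $\mathrm{Wh}_\chi(C)=0$, hence $C=0$.

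I expect the main obstacle to be the exactness of $\mathrm{Wh}_\chi$, i.e.\ $H^1(\mf m,M)=0$ for all generalized Whittaker modules. This needs to be established beyond the image of $\mathrm{Sk}$, and it must work in the odd-$\mathrm r$ case, where $\mf m\neq\mf m'$. My first attempt will be to show that every Whittaker module is a union of submodules admitting a filtration with subquotients of the form $Q_{\mf l}\otimes_{U(\g,e)}V$. Failing that, I will use the Whittaker vector argument above inductively, generating with $Q_{\mf l}$. The parity structure of $\theta$ enters only through Theorem \ref{thm::1}. Both functors evidently commute with $\Pi$, so the equivalence is one of supercategories.
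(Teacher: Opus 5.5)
The paper does not prove this theorem; it quotes it from Skryabin \cite{Skr02} and its super versions \cite{Zh14, ZS15, SX20}. Your outline is the Gan--Ginzburg form of Skryabin's argument, and its skeleton is sound: graded freeness, the Engel step, injectivity via $\mathrm{Wh}_\chi$ of the kernel, and surjectivity via $\mathrm{Wh}_\chi$ of the cokernel. Compared with the citation, it gives a self-contained proof from Theorems~\ref{thm::GGrev} and~\ref{thm::1}, and it shows exactly where $\theta$ enters when ${\rm r}$ is odd.

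The ``main obstacle'' in your last paragraph is not one. You never need $H^1(\mf m,M)=0$ for arbitrary $M\in\g\Wmod^\chi$. Once injectivity is known, the image of $Q_{\mf l}\otimes_{U(\g,e)}\mathrm{Wh}_\chi(M)\to M$ is ${\rm Sk}(V)$ with $V=\mathrm{Wh}_\chi(M)$. The long exact sequence for $0\to{\rm Sk}(V)\to M\to C\to 0$ then only uses two facts: $\mathrm{Wh}_\chi({\rm Sk}(V))=\ov 1\otimes V$ maps isomorphically onto $\mathrm{Wh}_\chi(M)$, and $H^1(\mf m,{\rm Sk}(V))=0$. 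Both come out of your graded computation. Exactness of $\mathrm{Wh}_\chi$ on all of $\g\Wmod^\chi$ is then a corollary, and the filtration strategies you propose would essentially presuppose the theorem.

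What does need correcting is the structure claimed in your first step.

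\emph{The module structure.} As $\mf m$-modules (via $y\mapsto y-\chi(y)$, i.e.\ $\ad y$ on $Q_{\mf l}$), neither $Q_{\mf l}$ nor ${\rm Sk}(V)$ has the form $U(\mf m)\otimes(\cdot)$ or ``$U(\mf m_{\mf p})\otimes V$''. A free $U(\mf m)$-module has no nonzero Whittaker vectors, since $f-\chi(f)$ acts injectively on it. Such identifications hold only as right $U(\g,e)$-modules. The correct statements are:
\begin{itemize}
\item $\gr Q_{\mf l}$ is free over $\gr U(\g,e)$, with $\gr Q_{\mf l}\otimes_{\gr U(\g,e)}\C\cong\C[M]$ as (cofree) $\mf m$-modules.
\item Lifting a homogeneous basis gives a filtered basis $\{b\}$ of $Q_{\mf l}$ over $U(\g,e)$.
\item Since $V$ carries no filtration, you must filter ${\rm Sk}(V)$ by $\sum_{\deg b\le k}b\otimes V$. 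This is $\mf m$-stable because $U(\g,e)$ is non-negatively filtered with $F_0=\C$, and then $\gr{\rm Sk}(V)\cong\C[M]\otimes V$.
\end{itemize}

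\emph{The odd case.} Theorem~\ref{thm::GGrev}(ii) does not give this verbatim, because it concerns $\mf m'$ rather than $\mf m$. The missing input is that $\mf m$ is an ideal of codimension $0|1$ in $\mf m'$. Hence $M'\cong M\times\C^{0|1}$ and $\C[M']\cong\C[M]\otimes\Lambda[\xi]$. Here $\xi=\gr\ov\theta$ is the function $g\mapsto\chi(\mathrm{Ad}(g^{-1})\theta)$, which is left $M$-invariant because $\omega_\chi(\mf l,\theta)=0$. This yields three things:
\begin{itemize}
\item $(\gr Q_{\mf l})^{\mf m}=\C[\mc S]\otimes\Lambda[\xi]=\gr U(\g,e)$. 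This matches Theorem~\ref{thm::1} once you check, via the map $\phi$, that $F_kU(\g,e)=F_k\mc W\oplus F_{k-1}\mc W\,\ov\theta$.
\item The required freeness of $\gr Q_{\mf l}$ over $\gr U(\g,e)$.
\item $\gr Q_{\mf l}\otimes_{\gr U(\g,e)}\C\cong\C[M']/(\xi)=\C[M]$.
\end{itemize}

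With these supplied, your argument goes through.
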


\subsubsection{} \label{sect::322}
In the case of semisimple Lie algebras, the existence of finite-dimensional representations of $U(\g,e)$ has been proved by Premet in \cite[Corollary 4.1]{Pr07} and they have been studied in various works (e.g., \cite{Lo10, Lo11}).   The primary goal of this section is to  explore the relationship between the finite-dimensional representations of the finite $W$-superalgebra associated with $\g$ and those of the finite $W$-algebras associated with its even part $\g_\oa$.

Let $U(\g_\oa, e)$ be the finite $W$-algebra associated to $e\in \g_\oa$. Here, the construction is based on the restricted good grading $\Gamma|_{\g_\oa}$ to $\g_\oa$ for $e$ and the corresponding nilpotent subalgebra $\mathfrak{m}_{\bar{0}}$. Denote by $\g_\oa\text{-Wmod}^{\chi^0}$ the category of all generalized Whittaker modules over $\g_\oa$ associated to the character $\chi^0:=\chi|_{\mf m_\oa}: \mf m_\oa\rightarrow \C$. Then we have the classical Skryabin equivalence of categories:
\[\text{Wh}^0_\chi(\_):  \g_\oa\text{-Wmod}^{\chi^0}\rightarrow U(\g_\oa,e)\text{-{sMod}},~\hskip0.1cm {\rm Sk}^0(\_): U(\g_\oa,e)\text{-{sMod}} \rightarrow \g_\oa\text{-Wmod}^{\chi^0},\] where $\text{Wh}_\chi^0$ and ${\rm Sk}^0$ are $\g_\oa$-analogues of the functors $\text{Wh}_\chi$ and ${\rm Sk}$.

Let $\Res(\_): U(\g){\sMod}\rightarrow U(\g_\oa){\sMod}$ denote the restriction functor. This functor admits a left adjoint given by the induction  functor 
$\Ind(-) = U(\g) \otimes_{U(\g_{\oa})}(\_).$  For any $M\in \g_\oa\Wmod^{\chi^0}$, we may note that $\Res\Ind(M)\in \g_\oa\Wmod^{\chi^0}$. It follows that for any $a\in \mf m_\ob$ the element  $a^2 = \frac{1}{2}[a,a]$ acts locally nilpotently on $\Ind(M)$. Therefore, $\Ind(M)\in \g\Wmod^\chi$. This allows us to define the exact and faithful functors $ \mathsf{Ind}(\_)$ and $\mathsf{Res}(\_)$ between $U(\g, e){\sMod}$ and $  
U(\g_{\bar{0}}, e)$-${\mathrm{sMod}}$ such that the following diagrams commute 
\begin{equation} \label{diagram::1}
\begin{tikzcd}
U(\g, e){\sMod}  & \g\Wmod^\chi \arrow[l, "{\rm Wh}_\chi"'] \\
U(\g_{\bar{0}}, e)\text{-}{\mathrm{sMod}} \arrow[u, "\mathsf{Ind}(-)"] \arrow[r, "{\rm Sk}^0"] & \g_{\bar{0}}\Wmod^{\chi^0} \arrow[u, "\mathrm{Ind}(-)"]
\end{tikzcd}
\begin{tikzcd}
U(\g, e){\sMod} \arrow[r, "{\rm Sk}"] \arrow[d, "\mathsf{Res}(-)"'] & \g\Wmod^\chi \arrow[d, "\mathrm{Res}(-)"'] \\
U(\g_{\bar{0}}, e)\text{-}{\mathrm{sMod}}  & \g_{\bar{0}}\Wmod^{\chi^0} \arrow[l, "{\rm Wh}^0_\chi"']
\end{tikzcd}
 \end{equation}

We denote by $U(\mf{g},e)\text{-}\mathrm{sMod}_{\mathrm{fd}}$ and $U(\mf{g}_\oa,e)\text{-}\mathrm{sMod}_{\mathrm{fd}}$ the categories of finite-dimensional $U(\g,e)$- and $U(\g_\oa,e)$-modules with even homorphisms, respectively. By \cite[Theorem 1.2.3-(1)]{Lo10}, there exists a finite-dimensional $U(\g_\oa,e)$-module, and hence the category $U(\mf{g}_\oa,e)\text{-}\mathrm{sMod}_{\mathrm{fd}}$ is nonzero. The following proposition shows that $U(\g,e)\text{-}\mathsf{sMod}_{\mathrm{fd}}$ is nonzero as well.

\begin{prop}\label{prop::FrobeniusExt}
The functors $\mathsf{Ind}$ and $\mathsf{Res}$ restrict to well-defined functors between the categories $U(\mathfrak{g}_{\bar{0}},e)\text{-}\mathrm{sMod}_{\mathrm{fd}}$ and $U(\mathfrak{g},e)\text{-}\mathrm{sMod}_{\mathrm{fd}}$. Furthermore, $\mathsf{Ind}$ is left adjoint to $\mathsf{Res}$  on these subcategories. In particular, every finite-dimensional representation of $U(\g,e)$ is obtained as a quotient of $\mathsf{Ind}(M)$, for some $M \in U(\mathfrak{g}_\oa,e)\text{-}\mathrm{sMod}_{\mathrm{fd}}$.
\end{prop}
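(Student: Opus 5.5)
The plan is to define $\mathsf{Ind} := {\rm Wh}_\chi\circ \Ind\circ {\rm Sk}^0$ and $\mathsf{Res} := {\rm Wh}^0_\chi\circ \Res\circ {\rm Sk}$, as in \eqref{diagram::1}, and to obtain the adjunction formally. Since ${\rm Sk}$ and ${\rm Wh}_\chi$ are mutually inverse equivalences by Theorem~\ref{thm::Skr}, and likewise ${\rm Sk}^0$ and ${\rm Wh}^0_\chi$, we have natural isomorphisms
\begin{align*}
\operatorname{Hom}_{U(\g,e)}(\mathsf{Ind}\,V, W) &\cong \operatorname{Hom}_{\g}(\Ind\,{\rm Sk}^0 V, {\rm Sk}\,W) \\
&\cong \operatorname{Hom}_{\g_\oa}({\rm Sk}^0 V, \Res\,{\rm Sk}\,W) \cong \operatorname{Hom}_{U(\g_\oa,e)}(V, \mathsf{Res}\,W).
\end{align*}
Here we use that $\g\Wmod^\chi$ and $\g_\oa\Wmod^{\chi^0}$ are full subcategories, and that $\Res$ maps $\g\Wmod^\chi$ into $\g_\oa\Wmod^{\chi^0}$ because $\mf m_\oa\subseteq \mf m$. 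This gives the adjunction on the full module categories, and hence on any full subcategories preserved by both functors. So the real content is that the two functors preserve finite-dimensionality.

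For that, I would compare Whittaker vectors via dimension counts. Let $N={\rm Sk}^0(V)$, so ${\rm Wh}^0_\chi(N)=V$. By PBW, $\Ind N\cong \Lambda(\g_\ob)\otimes N$ as a $\g_\oa$-module, up to a filtration whose associated graded is $\Lambda(\g_\ob)\otimes N$, and $\Lambda(\g_\ob)$ is finite-dimensional. Since $\mathsf{Res}$ is exact by Skryabin exactness (the Whittaker functor is exact on generalized Whittaker modules) and ${\rm Wh}^0_\chi$ is exact, we get $\dim {\rm Wh}^0_\chi(\Res\Ind N)\le \dim\Lambda(\g_\ob)\cdot \dim {\rm Wh}^0_\chi(N)$. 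Indeed, each subquotient $\Lambda^k(\g_\ob)\otimes N$ with the tensor $\g_\oa$-action is a generalized Whittaker module, and its Whittaker space can be computed through the equivalence: twisting by the finite-dimensional module $\Lambda^k\g_\ob$ corresponds on the $U(\g_\oa,e)$ side to a functor multiplying dimensions by at most $\dim\Lambda^k\g_\ob$. This is the Goodwin or Losev style "tensoring with finite-dimensional modules" result. Finally, ${\rm Wh}_\chi(\Ind N)\subseteq {\rm Wh}^0_\chi(\Res\Ind N)$ because $\mf m_\oa\subseteq\mf m$, so $\mathsf{Ind}\,V$ is finite-dimensional.

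For $\mathsf{Res}$, the argument is similar. ${\rm Sk}(W)=Q_{\mf l}\otimes_{U(\g,e)}W$, and as a $\g_\oa$-module $Q_{\mf l}$ is built from $Q^0\otimes \Lambda(\g_\ob/\mf m_\ob)$-type pieces. An alternative cleaner route is to note that $\mf m$ is generated over $\mf m_\oa$ by the finite-dimensional odd part $\mf m_\ob$. Then ${\rm Wh}^0_\chi(M)$ for $M\in\g\Wmod^\chi$ carries a locally nilpotent action of the shifted operators $a-\chi(a)$, $a\in\mf m_\ob$, and is filtered by the kernels of products of these operators, whose number is bounded by $\dim\Lambda(\mf m_\ob)$. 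Each graded piece embeds into ${\rm Wh}_\chi(M)$ via a finite-dimensional multiplicity space. This gives $\dim\mathsf{Res}\,W\le 2^{\dim\mf m_\ob}\dim W$. I expect this dimension bound, whether for $\mathsf{Ind}$ or for $\mathsf{Res}$, to be the main obstacle. I would first try the $\mf m_\ob$-filtration argument, since it works for both directions using only that the odd part is finite-dimensional and that the nilpotent odd operators super-anticommute up to elements of $\mf m_\oa$.

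For the last assertion, let $W$ be finite-dimensional and put $M=\mathsf{Res}\,W$, which is finite-dimensional by the previous step. The counit $\mathsf{Ind}\,\mathsf{Res}\,W\to W$ is surjective. To see this, apply ${\rm Sk}$: it becomes the multiplication map $\Ind\Res\,{\rm Sk}W\to {\rm Sk}W$, which is surjective, and ${\rm Wh}_\chi$ is exact, so surjectivity is preserved. Nonemptiness of $U(\g,e)\text{-sMod}_{\rm fd}$ then follows by applying $\mathsf{Ind}$ to the finite-dimensional $U(\g_\oa,e)$-module supplied by \cite{Lo10}, using faithfulness of $\mathsf{Ind}$ to see that the result is nonzero.
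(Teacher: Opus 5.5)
Your overall structure matches the paper's. You transport the adjunction through the two Skryabin equivalences. You show $\mathsf{Ind}$ preserves finite-dimensionality via $\Res\Ind N\cong\Lambda(\g_\ob)\otimes N$, the tensor-product theorem for Whittaker vectors, and the inclusion ${\rm Wh}_\chi(\Ind N)\subseteq{\rm Wh}^0_\chi(\Res\Ind N)$. You get the last assertion from surjectivity of the counit. All of that is fine.

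The gap is the step for $\mathsf{Res}$, which is exactly the content of Lemma~\ref{lem::nilpotentlem}; your sketch of it does not work as written. Elements $a\in\mf m_\ob$ (where $\chi(a)=0$) do not act on ${\rm Wh}^0_\chi(M)$. For $v\in{\rm Wh}^0_\chi(M)$ and $b\in\mf m_\oa$ you get $(b-\chi(b))av=[b,a]v$, and $[b,a]\in\mf m_\ob$ need not kill $v$. So there is no action on ${\rm Wh}^0_\chi(M)$ to filter, and nothing supports the claim that graded pieces embed into ${\rm Wh}_\chi(M)$.

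The missing idea is to order the odd directions along an $\ad\mf m_\oa$-stable flag. Since $\mf m$ is nilpotent, choose a basis $Y_1,\dots,Y_d$ of $\mf m_\ob$ with $[\mf m_\oa,Y_k]\subseteq\spann\{Y_1,\dots,Y_{k-1}\}$, and set $V_k=\{v\in{\rm Wh}^0_\chi(M)\mid Y_1v=\dots=Y_kv=0\}$. The flag condition gives $Y_kV_{k-1}\subseteq{\rm Wh}^0_\chi(M)$. Since $\chi$ vanishes on $[\mf m,\mf m]$, the $Y_i$ anticommute and square to zero on ${\rm Wh}^0_\chi(M)$, so $Y_kV_{k-1}\subseteq V_k$. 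Hence $V_{k-1}/V_k\cong Y_kV_{k-1}$ embeds into the next step $V_k$, not directly into $V_d={\rm Wh}_\chi(M)$. This gives $\dim V_{k-1}\le 2\dim V_k$, and iterating yields your bound $2^{d}$.

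Alternatively, your phrase about graded pieces can be made precise by Frobenius reciprocity: ${\rm Wh}^0_\chi(M)\cong\operatorname{Hom}_{\mf m}(U(\mf m)\otimes_{U(\mf m_\oa)}\C_\chi,M)$. The induced module is $2^d$-dimensional, and by the super Engel theorem applied to the $\chi$-shifted action, all its composition factors are $\C_\chi$ up to parity. Left exactness of $\operatorname{Hom}$ then gives the same bound.

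Your remark that the $\mf m_\ob$-filtration ``works for both directions'' is also wrong. It only compares ${\rm Wh}_\chi$ and ${\rm Wh}^0_\chi$ of one and the same $\g$-module. For $\mathsf{Ind}$ you genuinely need the tensor-product theorem to pass from ${\rm Wh}^0_\chi(N)$ to ${\rm Wh}^0_\chi(\Lambda(\g_\ob)\otimes N)$, as in your main argument and in the paper.
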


Before proceeding to the proof of the above proposition, we remark that the following lemma is of independent interest, as it applies to a general setting beyond the current one:
\begin{lem} \label{lem::nilpotentlem}
Let $\mf a=\mf a_{\bar 0}\oplus\mf a_{\bar 1}$ be a finite-dimensional nilpotent Lie superalgebra acting on a space $V$. Suppose that $\chi:\mf a\rightarrow\C$ is a character of $\mf a$, which therefore restricts to a character of $\mf a_{\bar 0}$. Then the subspace $V^{\mf a^\chi}:=\{v\in V\mid av=\chi(a)v,\forall a\in \mf a\}$ is finite dimensional if and only if the subspace $V^{\mf a_\oa^\chi}:=\{v\in V\mid av=\chi(a)v,\forall a\in \mf a_{\bar 0}\}$ is finite dimensional.
\end{lem}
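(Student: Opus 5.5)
Set $W:=V^{\mf a^\chi}$ and $W_0:=V^{\mf a_\oa^\chi}$. Then $W\subseteq W_0$, so if $W_0$ is finite dimensional, so is $W$. The content of the lemma is the converse. The plan is to show that $W_0$ is a finite-dimensional extension of $W$, by realizing $W_0$ as a module over a finite-dimensional exterior-type algebra on $\mf a_\ob$ and applying a nilpotent-action argument.

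\textbf{Step 1: $\mf a_\ob$ preserves $W_0$.} Since $\chi$ is a character, $\chi([\mf a,\mf a])=0$; in particular $\chi$ vanishes on $[\mf a_\oa,\mf a_\ob]\subseteq\mf a_\ob$ and on $[\mf a_\ob,\mf a_\ob]\subseteq\mf a_\oa$. Take $x\in\mf a_\ob$, $a\in\mf a_\oa$ and $v\in W_0$. Then
$$(a-\chi(a))xv=x(a-\chi(a))v+[a,x]v=[a,x]v .$$
So $W_0$ is not obviously stable under $\mf a_\ob$, and we handle this with a filtration. Let $\mf a_\ob=\mf a^{1}\supseteq\mf a^{2}\supseteq\cdots\supseteq 0$ be the terms of the lower central series of $\mf a$ intersected with $\mf a_\ob$. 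Since $\mf a$ is nilpotent, $\ad \mf a_\oa$ strictly lowers this filtration.

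\textbf{Step 2: reduce to the odd generators.} Consider the twisted operators $y:=x-\chi(x)$ for $x\in\mf a_\ob$; note $\chi(x)=0$ because $\chi$ is even. These satisfy $xy+yx=[x,y]\in\mf a_\oa$, which acts on $W_0$ by the scalar $\chi([x,y])=0$. Hence on $W_0$ the odd elements anticommute, modulo the issue from Step 1. The cleanest route avoids that issue altogether: define
$$W_0' := \text{the largest subspace of } W_0 \text{ stable under } \mf a .$$
Instead, I would argue directly as follows. Let $S\subseteq\mf a_\ob$ be a basis $x_1,\dots,x_k$. Consider the map
$$\Phi: W_0\to \bigoplus_{I\subseteq\{1..k\}} V,\qquad v\mapsto (x_I v)_I,$$
where $x_I$ is the ordered product. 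I claim $\ker$-type arguments show $W_0$ is finite dimensional once $W$ is. Concretely, induct on $\dim\mf a_\ob$. Choose $x\in\mf a_\ob$ such that $[\mf a,x]$ lies in the last nonzero term of the filtration, which we may take to be a central ideal in the odd part; central odd elements commute with $\mf a_\oa$. Set $\mf b=\mf a_\oa\oplus\mf a'_\ob$, where $\mf a'_\ob$ is a complement to $\C x$ in $\mf a_\ob$, chosen to be an ideal; this is possible since $\mf a$ is nilpotent, so there is a codimension-one ideal containing $[\mf a,\mf a]$ and $\mf a_\oa$. Then $x$ normalizes $\mf b$, and $\chi$ is stable under $\ad x$ because $\chi$ kills brackets. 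Therefore $x$ preserves $U:=V^{\mf b^\chi}$. On $U$, $x^2=\tfrac12[x,x]$ acts by the scalar $\tfrac12\chi([x,x])=0$, and $V^{\mf a^\chi}=\ker(x|_U)$. Since $x^2=0$ on $U$, we have $\operatorname{im}(x|_U)\subseteq\ker(x|_U)$, so $\dim U\le 2\dim\ker(x|_U)=2\dim W$. Thus $W$ finite implies $U$ finite, and by induction on the number of odd generators we reach $W_0$, which is finite if $W$ is, with $\dim W_0\le 2^{\dim\mf a_\ob}\dim W$.

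\textbf{Main obstacle.} The key point is the existence, at each stage, of an ideal $\mf b\supseteq\mf a_\oa$ of codimension one in $\mf a$ such that the intermediate algebras stay nilpotent subalgebras containing $\mf a_\oa$. This follows because the quotient $\mf a/([\mf a,\mf a]+\mf a_\oa)$ is nonzero whenever $\mf a_\ob\not\subseteq[\mf a,\mf a]+\mf a_\oa$, which holds by nilpotency: a nilpotent Lie superalgebra is generated by any complement of $[\mf a,\mf a]$. If $\mf a_\ob\subseteq [\mf a,\mf a]$, then $\mf a_\ob\subseteq[\mf a_\ob,\mf a_\oa]+\dots$ forces $\mf a_\ob=0$ by nilpotency. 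To handle general $\mf b$, I would replace $\mf a_\oa$ by a chain of subalgebras $\mf a_\oa=\mf b_0\subset\mf b_1\subset\cdots\subset\mf b_k=\mf a$, each an ideal of codimension one in the next, with the quotient spanned by an odd element. The inequality $\dim V^{\mf b_{i}^\chi}\le 2\dim V^{\mf b_{i+1}^\chi}$ is proved exactly as above, using that $x_{i+1}$ normalizes $\mf b_i$.
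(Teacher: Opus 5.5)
Your final argument, which inducts via a codimension-one ideal $\mf b\supseteq\mf a_\oa$ (equivalently, uses the chain $\mf a_\oa=\mf b_0\subset\cdots\subset\mf b_d=\mf a$), is correct and is essentially the paper's proof: the chain is $\mf b_k=\mf a_\oa\oplus\spann\{Y_1,\dots,Y_k\}$ for the paper's basis with $[\mf a_\oa,Y_k]\subseteq\spann\{Y_1,\dots,Y_{k-1}\}$, and your square-zero estimate $\dim V^{\mf b_{k-1}^\chi}\le 2\dim V^{\mf b_k^\chi}$ is the paper's $\dim V_{k-1}\le 2\dim V_k$, giving the same bound $2^d$. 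The exploratory detours play no role and should be deleted: the space $W_0'$, the map $\Phi$, and the choice of $x$ through the last term of the lower central series (whose ``central'' claim is not accurate in general).
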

\begin{proof} It suffices to show that $\dim V^{\mf a^\chi}<\infty$ implies that $\dim V^{\mf a_\oa^\chi}<\infty$. Let $\{Y_1,\ldots, Y_d\}\subset \mf a_\ob$ be a basis for $\mf a_\ob$ such that $$[\mf a_\oa, Y_k]\subset \text{span}\{Y_1,\ldots, Y_{k-1}\},$$ for any $1\leq k\leq d$ (set $Y_0:=0$). Define a filtration of subspaces of $V^{\mf a_\oa^\chi}$ as follows:
     \[ V^{\mf a^\chi} =V_d\subset V_{d-1}\subset \cdots \subset V_1 \subset  V_0 =V^{\mf a_\oa^\chi},\]
     where $V_k:=\{v\in V^{\mf a^\chi_\oa}\mid Y_1v=Y_2v=\cdots= Y_kv=0\}$. Since $x^2v =\frac{1}{2}[x,x]v =0$, for any $x\in \mf a_\ob$ and $v\in V^{\mf a_\oa^\chi}$, it follows that $$Y_i Y_j v = -Y_jY_iv,~~~\text{and }~Y_i^2v=0,$$
     for any $1\leq i, j\leq d$ and $v\in V^{\mf a_\oa^\chi}$. This implies that $Y_k V_{k-1} \subset V_{k-1}$, for each $k$. Therefore, we have well-defined nilpotent operators 
     \[T_k: V_{k-1}\rightarrow V_{k-1}, ~v\mapsto Y_kv,~\text{ for }1\leq k\leq d.\] We may note that $V_k = \ker(T_k)$, for each $k$. Since $T_k^2=0$ we have $\text{Im}T_k\subseteq\ker T_k$ and thus $\dim V_{k}<\infty$ implies that $\dim V_{k-1}<\infty$. Indeed, in this case, we have 
     \[\dim V_{k-1} = \dim (T_k(V_{k-1}))+\dim V_k \leq 2\dim(V_k).\]
    Consequently, we obtain that  \[\dim(V^{\mf a^\chi_\oa})= \dim V_{0}  \leq 2^d\dim(V_d)=2^d \dim(V^{\mf a^\chi}).\] This completes the proof.
\end{proof}

\begin{proof}[Proof of Proposition \ref{prop::FrobeniusExt}]
   First, we show that $\mathsf{Ind}(\_): U(\mf{g}_\oa,e)\text{-}\mathrm{sMod}_{\mathrm{fd}} \rightarrow U(\mf{g},e)\text{-}\mathrm{sMod}_{\mathrm{fd}}$ is well-defined. To see this, let $M\in \g_\oa\Wmod^{\chi^0}$ be such that $\text{Wh}^0_\chi(M)$ is finite-dimensional. Denote by $E:= U(\g_\ob)$ the $\g_\oa$-module under the adjoint action of $\g_\oa$. By \cite[Theorem 8.1]{BK02}, it follows that  $\text{Wh}^0_\chi(E\otimes M)\cong E \otimes   \text{Wh}_{\chi}^0(M)$ as vector space. This implies that $\text{Wh}^0_\chi(E\otimes M)$ is finite-dimensional. Therefore, $\text{Wh}_{\chi}(\Ind(M))\subseteq \text{Wh}^0_{\chi}(E\otimes M)$ is finite-dimensional. 

   Next, the well-definedness of $\mathsf{Res}$ on the categories of finite-dimensional $U(\mf{g},e)$- and $U(\mf{g}_\oa,e)$-modules follows from   Lemma \ref{lem::nilpotentlem} by setting $\mathfrak{a} = \mathfrak{m}$. Since the  functors  $\Ind$ and  $\Res$  form a pair of adjoint functors between $\g_\oa\text{-Wmod}^\chi$ and $\g\text{-Wmod}^\chi$, the same holds for the functors $(\mathsf{Ind}, \mathsf{Res})$ by the commutativity of the diagram \eqref{diagram::1}. Finally, any $U(\g,e)$-module $X$ can be realized as a quotient of $\mathsf{Ind}\mathsf{Res}(X)$. If, in addition, $X$ is finite-dimensional, then so is $\mathsf{Res}(X)$. This completes the proof. 
\end{proof}

\subsection{Irreducible representations of  $\mc W$} \label{sect::IrrWell}
 In the case when ${\rm r}$ is even, then $\mc W = U(\g,e)$ and thus the representation theory reduces to the study of the category of  Whittaker modules over $\g$ by Theorem~\ref{thm::Skr}. Therefore, we shall focus on the case when $\rm r$ is odd, and make this assumption for the remainder of this paper.

 \subsubsection{Clifford Twist} \label{sect::Cltw}
 Following \cite{KKT16}, we recall the basic framework of supercategories.  A \emph{supercategory} is a triple $(\mc C, P, \xi)$ consisting of a category $\mc C$ and an endofunctor $P \colon \mc C \to \mc C$ equipped with a natural isomorphism $\xi \colon P^2 \xrightarrow{\cong} \mathrm{id}_{\mc C}$.  For instance, for any associative superalgebra $A$, the category $\mc C = A\text{-}\mathrm{sMod}$ of all $A$-modules with parity-preserving homomorphisms, equipped with the parity-change functor $P= \Pi$ and the natural isomorphism $\xi^A: \Pi^2 \xrightarrow{\cong}\mathrm{id}_{\mc C}$, is a supercategory. In this setup, one can define notions of \emph{superfunctors} and \emph{equivalences of supercategories}; see also \cite[Definition~2.1]{KKT16}. A \emph{Clifford twist} of a supercategory $(\mc C, P, \xi)$ is the supercategory $(\mc C^{\texttt{CT}}, P^{\texttt{CT}}, \xi^{\texttt{CT}})$ consisting of objects  of pairs $(X, \varphi)$, where $X \in \mc C$ and $\varphi \colon P X \xrightarrow{\cong} X$ is an isomorphism in $\mc C$ making the following diagram commute:
	\[
	\begin{tikzcd}
		P^2 X \arrow[r, "\xi_X"] \arrow[d, "P \varphi"'] & X \\
		P X \arrow[ur, "\varphi"'] &
	\end{tikzcd}
	\]
	For two objects $(X, \varphi)$ and $(Y, \psi)$ in $\mc C^{\texttt{CT}}$, the morphism space $\mathrm{Hom}_{\mc C^{\texttt{CT}}}\big((X, \varphi), (Y, \psi)\big)$ is defined as the subspace of $\mathrm{Hom}_{\mc C}(X, Y)$ consisting of morphisms $f \colon X \to Y$ such that the diagram
	\[
	\begin{tikzcd}
		P X \arrow[r, "P f"] \arrow[d, "\varphi"'] & P Y \arrow[d, "\psi"] \\
		X \arrow[r, "f"'] & Y
	\end{tikzcd}
	\] 	commutes. The associated functors $P^{\texttt{CT}}$ and $\xi^{\texttt{CT}}$ are given by
	\begin{align*}
	&P^{\texttt{CT}}(X, \varphi) = (X, -\varphi),~ \quad \xi^{\texttt{CT}}_{(X, \varphi)}=\mathrm{id}_{(X, \varphi)}.
	\end{align*} When the functor $P$ and isomorphism $\xi$ are clear from the context, we simply write $\mc C$ for the supercategory $(\mc C, P, \xi)$. Recall that for a full supercategory $\mc C$ of $A\text{-sMod}$, we  denote by   $\mathrm{Irr}^{\texttt{M}}(\mc C)$ and $\mathrm{Irr}^{\texttt{Q}}(\mc C)$  the sets of isomorphism classes of simple objects of type~$\texttt{M}$ and simple objects of type~$\texttt{Q}$ in $\mc C$, respectively.

\subsubsection{Categorical Equivalence}
It is well known that a simple module $M$ over an associative superalgebra $A$ admits either no odd automorphisms or a unique (up to scalar) odd  automorphism; see, e.g., \cite[Section 3.1]{CW12} for details. Accordingly, $M$ is said to be of {\it type} \texttt{Q} if $M\cong \Pi M$ in $A\sMod$,   and of {\it type} \texttt{M} otherwise. 

Recall that $G \colon U(\g,e)\text{-sMod} \to \mc W\text{-sMod}$ and $F \colon \mc W\text{-sMod} \to U(\mathfrak{g},e)\text{-sMod}$ denote the restriction and induction functors, respectively. Recall further that $\ov \theta$ denotes the image of $\theta$ under the canonical projection from $U(\g)$ onto $U(\g,e)$. By Theorem \ref{thm::1}, we can identify ${\mc Cl}(1)$ with a subalgebra of $U(\g,e)$ via the mapping $\Theta \mapsto \ov \theta$ and the multiplication defines a superalgebra isomorphism
\begin{align*}
&\mc W\otimes {\mc Cl}(1) \xrightarrow{\cong} U(\g,e),~~~ f\otimes g\mapsto fg. 
\end{align*}  In particular, $U(\mathfrak{g},e)$ is free as a $\mc W$-module, and hence both $F$ and $G$ are exact functors. These functors were  also introduced in \cite{C95, BK02} in slightly different formulations for the study of symmetric group representations, and they constitute a Morita super-equivalence in the terminology of \cite{W09}.  

By \cite[Proposition~8.4]{C95} (see also \cite[Section 3.1]{CW12}), $F$ sends a simple module of type \texttt{M} to one of type \texttt{Q}, and a simple module of type \texttt{Q} to a direct sum of two simple modules of type \texttt{M} of opposite parity.

The following lemma, which may be of independent interest, is a special case of \cite[Lemma 2.2]{KKT16} and is similar to \cite[Theorem 3.4]{BK02}. As the formulations there are slightly different, we provide a self-contained proof below for the reader's convenience.
\begin{lem} \label{lem::bij}
For any $X \in U(\mf{g},e)\textnormal{\text{-sMod}}$ and $Y \in \mc W\textnormal{\text{-sMod}}$, there are the following isomorphisms:
\begin{equation} \label{eq::isoFT}
    F(G(X)) \cong X \oplus \Pi X \quad \text{and} \quad G(F(Y)) \cong Y \oplus \Pi Y.
\end{equation}  
\end{lem}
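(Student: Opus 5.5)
We work throughout with the identification $U(\g,e)=\mc W\otimes{\mc Cl}(1)$ from Theorem~\ref{thm::1}, where $\Theta=\ov\theta$ is odd, (super)commutes with $\mc W$, and satisfies $\ov\theta^2=\tfrac12$ (since $[\ov\theta,\ov\theta]=2\ov\theta^2=\ov 1$). Both isomorphisms will be written down explicitly as even $U(\g,e)$- (resp.\ $\mc W$-) module maps.

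\emph{Second isomorphism.} Since $U(\g,e)=\mc W\oplus \mc W\ov\theta=\mc W\oplus\ov\theta\mc W$ is free of rank one on each of $1$ and $\ov\theta$ as a right $\mc W$-module, we have $F(Y)=U(\g,e)\otimes_{\mc W}Y=(1\otimes Y)\oplus(\ov\theta\otimes Y)$ as superspaces. We check that each summand is $\mc W$-stable. For $w\in\mc W$ homogeneous,
\[
w(\ov\theta\otimes y)=(-1)^{|w|}\ov\theta w\otimes y=(-1)^{|w|}\ov\theta\otimes wy .
\]
Hence $y\mapsto 1\otimes y$ gives $Y\cong 1\otimes Y$. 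The map $y\mapsto\ov\theta\otimes y$ is parity-shifting and intertwines the action $w\cdot y:=(-1)^{|w|}wy$, so it is an even isomorphism $\Pi Y\cong\ov\theta\otimes Y$ with respect to the sign convention for $\Pi$ in Section~\ref{subsec:smodules}. This gives $G(F(Y))\cong Y\oplus\Pi Y$.

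\emph{First isomorphism.} By the same computation with the roles reversed, $F(G(X))=(1\otimes X)\oplus(\ov\theta\otimes X)$ as superspaces, but neither summand is stable under $\ov\theta$, so we must change basis. Define
\[
e_\pm=\tfrac12\bigl(1\otimes x \pm \sqrt2\,\ov\theta\otimes \ov\theta x\bigr)
\]
and the corresponding maps
\[
\iota_1:X\to F(G(X)),\quad x\mapsto \ov\theta\otimes \ov\theta x+\tfrac{1}{2}\,1\otimes x ,
\]
together with an analogous parity-twisted map $\iota_2:\Pi X\to F(G(X))$. The cleanest route is to observe that the multiplication map $m:F(G(X))\to X$, $u\otimes x\mapsto ux$, is a surjective even $U(\g,e)$-homomorphism. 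We then construct a splitting $s:X\to F(G(X))$,
\[
s(x)=1\otimes x\cdot\tfrac12+\ov\theta\otimes\ov\theta x ,
\]
normalized so that $m\circ s=\mathrm{id}$; indeed $\tfrac12 x+\ov\theta^2x=\tfrac12x+\tfrac12x=x$. We verify that $s$ is $U(\g,e)$-linear. Linearity for $w\in\mc W$ is immediate from supercommutativity. Linearity for $\ov\theta$ is the identity
\[
\ov\theta s(x)=\tfrac12\ov\theta\otimes x+\tfrac12\,1\otimes\ov\theta x=s(\ov\theta x),
\]
which follows from $\ov\theta^2=\tfrac12$ and from moving $\ov\theta$ across the tensor sign by $\mc W$-balancedness only in the trivial case; this is a direct check with signs. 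Next we identify $\ker m$: the map $x\mapsto 1\otimes \ov\theta x-\ov\theta\otimes x$ (or a suitable normalization) is an injective, odd-degree-shifting $U(\g,e)$-map, i.e.\ an even map $\Pi X\to\ker m$. A dimension/decomposition count, using $F(G(X))=1\otimes X\oplus\ov\theta\otimes X$, shows that its image equals $\ker m$. Together these yield $F(G(X))\cong X\oplus\Pi X$.

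\emph{Main obstacle.} The delicate step is the sign bookkeeping in the $\ov\theta$-linearity of $s$ and of the kernel map, since $\ov\theta$ is odd and must pass both the tensor sign and homogeneous elements of $X$. Normalizations (for instance a factor of $\sqrt2$, or using $2\ov\theta$, which squares to $2$) may need adjusting so that the odd map becomes a genuine module map into $\Pi X$. I would first verify everything on $X={\mc Cl}(1)$ itself as a sanity check.
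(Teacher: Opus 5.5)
This is correct and is essentially the paper's proof. The second isomorphism comes from $U(\g,e)=\mc W\oplus\ov\theta\mc W$. The first comes from the split exact sequence $0\to\Pi X\xrightarrow{j}F(G(X))\to X\to 0$ with $j(x)=1\otimes\ov\theta x-\ov\theta\otimes x$; its image is exactly the kernel because $1\otimes a+\ov\theta\otimes b\mapsto 0$ forces $a=-\ov\theta b$, so you need no dimension count, which would fail for infinite-dimensional $X$ anyway.

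Your splitting $s(x)=\tfrac12\,1\otimes x+\ov\theta\otimes\ov\theta x$ is the correctly normalized one for $\ov\theta^2=\tfrac12$; the paper's printed $i_X(x)=\tfrac12(1\otimes x+\ov\theta\otimes\ov\theta x)$ implicitly takes $\ov\theta^2=1$. You should delete the unused $e_\pm$, which are not module maps, and the unspecified $\iota_2$.
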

\begin{proof} The second isomorphism in \eqref{eq::isoFT} follows by the isomorphism $U(\g,e)\cong \mc W\otimes {\mc Cl}(1)$ immediately. Thus, it remains to show the first isomorphism  in \eqref{eq::isoFT}.   Let $X\in U(\g,e)\sMod$. Then the counit $\epsilon_X$ of the adjoint pair $(F,G)$ is given by
$$\epsilon_X \colon U(\g,e) \otimes_{\mc W} G(X) \longrightarrow X, \quad r \otimes x \longmapsto rx,$$ and the kernel $\ker(\epsilon_X) = \spann\{1\otimes \ov \theta x- \ov \theta\otimes x\mid~x\in X\}$ is isomorphic to $\Pi X$ as a $\mc W$-submodule. Therefore, we have a short exact sequence in $U(\g,e)\sMod$:
\begin{align} \label{eq::splitsex}
&0 \longrightarrow \Pi X \xrightarrow{\quad j \quad} U(\g,e) \otimes_{\mc W} G(X) \xrightarrow{\quad \epsilon_X \quad} X \longrightarrow 0, 
\end{align} where $j(x) =1\otimes \ov  \theta x- \ov \theta\otimes x$ for any $x\in G(X)$. We claim that the short exact sequence in \eqref{eq::splitsex} splits. To see this, we define a homomorphism of $U(\g,e)$-modules $i_X: X\rightarrow F(G(X))$ by $$i_X(x) = \frac{1}{2}(1 \otimes x + \ov \theta \otimes \ov \theta x),~x\in G(X).$$ Then $\epsilon_X\circ i_X$ is the identity map on $X$. This completes the proof.  
\end{proof}


 The following useful lemma is   analogous to \cite[Lemma 2.7]{KKT16} with the same proof.
\begin{lem} \label{lem::KKT27} For any associative superalgebra $A$, there exists an equivalence of supercategories
\[
\mathbb{E}^{A} \colon (A\otimes {\mc Cl}(1)\textnormal{\text{-sMod}}, \Pi) \xrightarrow{\sim} (A\textnormal{\text{-sMod}}^{\mathtt{CT}}, \Pi^{\mathtt{CT}}),
\]
sending $M$ to $(M, \phi_M)$, where $\phi_M(m) = \Theta m$ for any $m\in M$.
\end{lem}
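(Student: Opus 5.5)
We construct $\mathbb{E}^{A}$ and an explicit quasi-inverse, and check that both respect the supercategory structures. Write $\mathbb{E}^A(M)=(M,\phi_M)$, where $\phi_M\colon \Pi M\to M$ is the underlying identity map of vector spaces followed by multiplication by $\Theta$. Since $\Theta$ is odd, $\phi_M$ is an even linear map $\Pi M\to M$. We need $\phi_M$ to be $A$-linear, using the twisted action $a\cdot m=(-1)^{|a|}am$ on $\Pi M$. This holds because in $A\otimes{\mc Cl}(1)$ the element $\Theta$ supercommutes with $A$, so $\Theta a m=(-1)^{|a|}a\Theta m$. The relation $\Theta^2=\tfrac12[\Theta,\Theta]=\tfrac12$ shows that $\phi_M$ is invertible. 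The compatibility diagram with $\xi_M$ requires $\phi_M\circ\Pi\phi_M=\xi_M$. Here a scalar normalization appears: $\Theta^2=\tfrac12$. I would first pin down the sign and scale conventions of $\xi^A$ (with a sign $\sqrt{-1}$ possibly entering through $\Pi\phi$). I would then rescale, setting $\phi_M(m)=c\,\Theta m$ with $c^2$ chosen so that $\phi_M\circ\Pi\phi_M=\xi_M$. The statement's "$\phi_M(m)=\Theta m$" should be read up to this normalization, as in \cite[Lemma 2.7]{KKT16}. On morphisms, an even $A\otimes{\mc Cl}(1)$-map $f$ commutes with $\Theta$, hence makes the square $f\circ\phi_M=\phi_N\circ\Pi f$ commute. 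So $\mathbb{E}^A$ is a functor.

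For full faithfulness, note that a morphism $f\colon(M,\phi_M)\to(N,\phi_N)$ in $A\text{-sMod}^{\mathtt{CT}}$ is an even $A$-map $f$ with $f\phi_M=\phi_N\Pi f$. On underlying spaces this is exactly $f(\Theta m)=\Theta f(m)$, so $f$ is $A\otimes{\mc Cl}(1)$-linear. Hence the Hom spaces coincide, and $\mathbb{E}^A$ is fully faithful. For essential surjectivity, take $(X,\varphi)$. Define $\Theta x:=c^{-1}\varphi(x)$, regarding $\varphi$ as an odd linear endomorphism of $X$. $A$-linearity of $\varphi\colon\Pi X\to X$ translates into $\Theta a=(-1)^{|a|}a\Theta$. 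The commutative triangle $\varphi\circ\Pi\varphi=\xi_X$ gives $\Theta^2=\tfrac12$ with the normalization chosen above. So $X$ becomes an $A\otimes{\mc Cl}(1)$-module $M$ with $\mathbb{E}^A(M)=(X,\varphi)$ on the nose. In fact this gives a strict inverse on objects.

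Finally we check compatibility with the super structures. We need a natural isomorphism $\mathbb{E}^A\circ\Pi\cong\Pi^{\mathtt{CT}}\circ\mathbb{E}^A$ compatible with $\xi$ and $\xi^{\mathtt{CT}}$. In $\Pi M$, the action of $\Theta$ is $-\Theta$ by the sign rule. Hence $\mathbb{E}^A(\Pi M)=(\Pi M,-\phi_M)$ up to the canonical identification $\Pi M\cong M$ as $A$-modules given by $m\mapsto (-1)^{|m|}m$ or the identity, depending on conventions. The isomorphism needed to turn this into $(M,-\phi_M)=\Pi^{\mathtt{CT}}(M,\phi_M)$ is $\phi_M$ itself, suitably signed. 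This matches \cite[Definition 2.1]{KKT16}, where the structure isomorphism of the equivalence is built from $\varphi$. The main obstacle is bookkeeping: reconciling the sign conventions for $\Pi$ on morphisms, for $\xi^A$, and for the factor in $\Theta^2=\tfrac12$, so that both the triangle for objects of $\mc C^{\mathtt{CT}}$ and the coherence square for the superfunctor commute exactly. I would fix these conventions once, verify the two identities on a homogeneous element, and note that everything else is formal, following \cite[Lemma 2.7]{KKT16} verbatim.
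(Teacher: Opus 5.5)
Your proposal is correct and follows essentially the same argument the paper relies on; the paper gives no proof of its own and simply defers to \cite[Lemma 2.7]{KKT16}, which is exactly what you do: define $\mathbb{E}^A$ explicitly, observe that morphisms in the Clifford twist are precisely the even $A$-maps commuting with $\Theta$, and invert on objects by letting $\Theta$ act through $\varphi$. Your normalization caveat is justified and applies to the paper's statement as written, since $[\Theta,\Theta]=1$ gives $\Theta^2=\tfrac12$, so $m\mapsto\Theta m$ does not satisfy $\phi_M\circ\Pi\phi_M=\xi_M$ (with $\xi=\mathrm{id}$ one needs $\phi_M=\sqrt{2}\,\Theta$); moreover, the structure isomorphism $\mathbb{E}^A\circ\Pi\cong\Pi^{\mathtt{CT}}\circ\mathbb{E}^A$ can be taken to be $\sqrt{-1}\,\phi_M$, whereas $\pm\phi_M$ fails the coherence condition, so your ``suitably signed'' must allow a factor $\sqrt{-1}$, not just a sign.
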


We are now in a position to prove the following theorem, which is a restatement of Theorem C in Section \ref{sect::introThmC}.
\begin{thm} \label{thm::rethmC}
There is an equivalence of supercategories  $$\mathbb E \colon (U(\g,e)\emph{-sMod}, \Pi) \to (\mc W\emph{-sMod}^{\textnormal{\texttt{CT}}},\Pi^{\textnormal{\texttt{CT}}})$$ such that 
        	\begin{align*}
        	&\mathbb E(M) = (G(M),\varphi_M),~\text{where }\varphi_M(m)= {\ov \theta} m \text{ for }m\in M.
        	\end{align*}          Furthermore,  $F$ and $G$ are superfunctors between $(\mc W
            \emph{-sMod}, \Pi)$  and  $(U(\g,e)\emph{-sMod}, \Pi)$, and they induce two-to-one maps \begin{align}&\emph{Irr}^{\textnormal{\texttt{M}}}(\mc W\emph{-sMod})\rightarrow \emph{Irr}^{\textnormal{\texttt{Q}}}(U(\g,e)\emph{-sMod}),\label{eq::1411} \\ &\emph{Irr}^{\textnormal{\texttt{M}}}(U(\g,e)\emph{-sMod})\rightarrow \emph{Irr}^{\textnormal{\texttt{Q}}}(\mc W\emph{-sMod}), \label{eq::1412}
        	\end{align} respectively.  
\end{thm}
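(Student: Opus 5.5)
The equivalence $\mathbb E$ comes from Lemma~\ref{lem::KKT27} applied with $A=\mc W$, composed with the isomorphism of Theorem~\ref{thm::1}. Concretely, the superalgebra isomorphism $\mc W\otimes{\mc Cl}(1)\xrightarrow{\cong}U(\g,e)$, $f\otimes g\mapsto fg$ (with $\Theta\mapsto\ov\theta$), induces an isomorphism of supercategories $U(\g,e)\text{-sMod}\cong (\mc W\otimes{\mc Cl}(1))\text{-sMod}$. This isomorphism commutes strictly with $\Pi$. Composing it with $\mathbb E^{\mc W}$ gives the functor $M\mapsto (G(M),\varphi_M)$ with $\varphi_M(m)=\ov\theta m$.

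I will check directly that $\varphi_M:\Pi G(M)\to G(M)$ is an even $\mc W$-isomorphism compatible with $\xi$. For $w\in\mc W$ the action on $\Pi G(M)$ is $w\cdot m=(-1)^{|w|}wm$. Since $\ov\theta$ supercommutes with $\mc W$, we get $\ov\theta(-1)^{|w|}wm=w\ov\theta m$, so $\varphi_M$ is a $\mc W$-homomorphism. It is even as a map $\Pi G(M)\to G(M)$ because $\ov\theta$ is odd. Its square is $\ov\theta^2=\tfrac12[\ov\theta,\ov\theta]=\tfrac12$, so up to the normalization used in Lemma~\ref{lem::KKT27} (rescale $\Theta$ by $\sqrt2$ if needed) the triangle with $\xi_X$ commutes. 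Morphisms in the Clifford twist are exactly the $\mc W$-maps commuting with $\ov\theta$, i.e.\ the $U(\g,e)$-maps. Since $U(\g,e)=\mc W\oplus\mc W\ov\theta$, any pair $(X,\varphi)$ defines a $U(\g,e)$-module with $\ov\theta$ acting by $\varphi$; hence $\mathbb E$ is essentially surjective. Finally, $\Pi M\mapsto(G(M),-\varphi_M)=\Pi^{\texttt{CT}}\mathbb E(M)$, because on $\Pi M$ the odd element $\ov\theta$ acts by $-\ov\theta$. So $\mathbb E$ is a superfunctor.

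That $F$ and $G$ are superfunctors is routine. $G$ commutes with $\Pi$ strictly. For $F$, the natural isomorphism $U(\g,e)\otimes_{\mc W}\Pi Y\cong \Pi(U(\g,e)\otimes_{\mc W}Y)$ is $r\otimes y\mapsto(-1)^{|r|}r\otimes y$, and one checks it is compatible with $\xi$.

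For the two-to-one maps, I will use the result of \cite[Proposition~8.4]{C95} quoted above: $F$ sends a simple of type \texttt{M} to a simple of type \texttt{Q}, and a simple of type \texttt{Q} to $S\oplus\Pi S$ with $S$ simple of type \texttt{M}. The analogous statement for $G$ follows from Lemma~\ref{lem::bij}. If $X$ is a simple $U(\g,e)$-module, then $F(G(X))\cong X\oplus\Pi X$. Any simple $\mc W$-submodule $Y$ of $G(X)$ gives a nonzero map $F(Y)\to X$ by adjunction. Combining this with $G(F(Y))\cong Y\oplus\Pi Y$ and a length count, $G(X)$ is either simple or a sum $Y\oplus\Pi Y$.

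The map \eqref{eq::1411} is $Y\mapsto F(Y)$. It is two-to-one because $F(Y)\cong F(\Pi Y)$, while $Y\not\cong\Pi Y$ for $Y$ of type \texttt{M}. Conversely, if $F(Y)\cong F(Y')$, then applying $G$ and Lemma~\ref{lem::bij} gives $Y'\in\{Y,\Pi Y\}$ by Krull--Schmidt on the length-two semisimple module. Surjectivity onto $\mathrm{Irr}^{\texttt{Q}}(U(\g,e)\text{-sMod})$ holds because, for $X$ of type \texttt{Q}, a simple summand $Y$ of $G(X)$ must be of type \texttt{M}, and then $F(Y)\cong X$. The map \eqref{eq::1412} is handled symmetrically via $X\mapsto G(X)$, interchanging the roles of $F$ and $G$.

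The main obstacle is the type bookkeeping. It requires showing that $G$ of a type \texttt{M} simple is simple of type \texttt{Q}, rather than a sum $Y\oplus\Pi Y$. I would argue this through $\mathbb E$: a simple $X$ corresponds to $(G(X),\varphi_X)$. If $G(X)$ were $Y\oplus\Pi Y$ with $Y$ of type \texttt{M}, then $X\cong F(Y)$ would be of type \texttt{Q}, a contradiction. Meanwhile, $\varphi_X$ is an odd automorphism of $G(X)$, so when $G(X)$ is simple it is of type \texttt{Q}.
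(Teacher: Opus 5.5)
Your proposal is correct and follows the same route as the paper, which obtains $\mathbb E$ by combining Theorem~\ref{thm::1} with Lemma~\ref{lem::KKT27}, and gets the two-to-one maps from Lemma~\ref{lem::bij} (or \cite[Lemma 2.2]{KKT16}) together with the behaviour of $F$ on types from \cite{C95}; you spell out checks the paper leaves implicit. Your remark that $\ov\theta^2=\tfrac12$ forces rescaling $\varphi_M$ by $\sqrt2$ so that $\varphi\circ\Pi\varphi=\xi_X$ holds is a fair correction of the normalization, and two small points need a line each: $\mathbb E(\Pi M)$ has underlying module $\Pi G(M)$, so it is isomorphic via $\varphi_M$ (not equal) to $\Pi^{\texttt{CT}}\mathbb E(M)=(G(M),-\varphi_M)$; and the existence of a simple $\mc W$-submodule of $G(X)$ needs a reason, e.g.\ for any nonzero $\mc W$-submodule $N\subseteq G(X)$, both $N+\ov\theta N$ and $N\cap\ov\theta N$ are $U(\g,e)$-submodules of the simple module $X$.
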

\begin{proof}
By Theorem \ref{thm::1}, we have an isomorphism $U(\g,e)\cong \mc W\otimes {\mc Cl}(1)$. This, together with Lemma \ref{lem::KKT27}, yields the desired equivalence $\mathbb E$ of supercategories.
 Furthermore, the statements concerning the two-to-one maps in \eqref{eq::1411} and \eqref{eq::1412}
 follow directly from \cite[Lemma 2.2]{KKT16}. Alternatively, they can also be deduced from Lemma \ref{lem::bij}. This completes the proof.
\end{proof}

 To each simple $U(\g,e)$-module $N$, we fix a simple $\mc W$-submodule $G'(N)\subseteq G(N)$. Recall that  $\widetilde{G}(M) = G'(\operatorname{Wh}_\chi(M))$, for any  irreducible generalized Whittaker module $M\in \g\Wmod^\chi$. By Lemma \ref{lem::bij} and Theorem~\ref{thm::rethmC}, $G(\operatorname{Wh}_{\chi}(M))$ is a simple module of type $\texttt{Q}$ if $M$ is of type $\texttt{M}$, and is isomorphic   to a direct sum of two type $\texttt{M}$ simple modules $\widetilde{G}(M)\oplus \Pi\widetilde{G}(M)$ otherwise.

The following corollary is a strengthened version of Corollary \ref{cor::1st} in Subsection \ref{sect::DR}.

\begin{cor}
\label{thm::UWsimples}  
The assignment
\[
\operatorname{Irr}^{\mathtt{M}}(\g\Wmod^\chi) \to \operatorname{Irr}^{\mathtt{Q}}({\mc W}\emph{-sMod}), \quad [M] \mapsto [G(M)]
\]
gives rise to a two-to-one map, whereas the assignment
\[
\operatorname{Irr}^{\mathtt{Q}}(\g\Wmod^\chi) \to \operatorname{Irr}^{\mathtt{M}}({\mc W}\emph{-sMod}), \quad [M] \mapsto [\widetilde{G}(M)]
\]
is an injective map such that every simple module in $\operatorname{Irr}^{\mathtt{M}}({\mc W}\emph{-sMod})$ is isomorphic to $\widetilde{G}(M)$ or $\Pi\widetilde{G}(M)$ for some $[M]$. 
\end{cor}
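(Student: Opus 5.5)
The plan is to compose the Skryabin equivalence of Theorem~\ref{thm::Skr} with the type analysis for the pair $(F,G)$ in Theorem~\ref{thm::rethmC} and Lemma~\ref{lem::bij}. In the first assignment, $G(M)$ should be read as $G(\operatorname{Wh}_\chi(M))$.

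First I check that $\operatorname{Wh}_\chi$ preserves types. It is an equivalence of supercategories commuting with $\Pi$, since both $\operatorname{Wh}_\chi$ and ${\rm Sk}$ commute with parity change up to natural isomorphism. Hence it induces bijections $\operatorname{Irr}^{\mathtt M}(\g\Wmod^\chi)\to\operatorname{Irr}^{\mathtt M}(U(\g,e)\text{-sMod})$ and likewise for type $\mathtt Q$. This reduces the statement to one about simple $U(\g,e)$-modules $N$ and their restrictions $G(N)$.

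Next I analyse $G(N)$ for a simple $U(\g,e)$-module $N$. By Lemma~\ref{lem::bij}, $F(G(N))\cong N\oplus\Pi N$, and by the Clowes--Cheng--Wang statement quoted above, $F$ sends a type $\mathtt M$ simple to a type $\mathtt Q$ simple and a type $\mathtt Q$ simple to $L\oplus\Pi L$ with $L$ of type $\mathtt M$. A nonzero submodule $S\subseteq G(N)$ gives, by adjunction and simplicity of $N$, a surjection $F(S)\to N$. From this I will show the following.
\begin{itemize}
\item If $N$ is of type $\mathtt M$, then $G(N)$ is a simple module of type $\mathtt Q$. If $G(N)$ had a simple $\mathcal W$-summand $S$ of type $\mathtt M$, then $F(S)$ would be simple of type $\mathtt Q$ and surject onto $N$, forcing $N$ to be of type $\mathtt Q$. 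The length count from $G(F(Y))\cong Y\oplus\Pi Y$ rules out larger $G(N)$.
\item If $N$ is of type $\mathtt Q$, then $G(N)\cong \widetilde G\oplus\Pi\widetilde G$ with $\widetilde G$ simple of type $\mathtt M$.
\end{itemize}
I will phrase these facts through the equivalence $\mathbb E$: on simple objects, $\mathbb E$ together with the forgetful functor $\mathcal W\text{-sMod}^{\mathtt{CT}}\to\mathcal W\text{-sMod}$ is exactly $G$. This is the content already recorded just before the corollary, so I will cite it.

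Then I establish surjectivity and fibres. Every simple $\mathcal W$-module $Y$ is a direct summand of $G(F(Y))\cong Y\oplus\Pi Y$, and $F(Y)$ is either simple or $L\oplus\Pi L$. So $Y$ occurs in $G(N)$ for some simple $N$, namely $N=F(Y)$ when $Y$ is of type $\mathtt M$, or a summand $L$ of $F(Y)$ when $Y$ is of type $\mathtt Q$. Combined with the previous step:
\begin{itemize}
\item Type $\mathtt Q$ simple $\mathcal W$-modules all arise as $G(N)$ with $N$ of type $\mathtt M$.
\item Type $\mathtt M$ simple $\mathcal W$-modules all arise, up to $\Pi$, as $\widetilde G(N)$ with $N$ of type $\mathtt Q$.
\end{itemize}

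For the fibres, $G(N)\cong G(N')$ implies $N'\cong N$ or $N'\cong\Pi N$, by applying $F$ and using Krull--Schmidt on $N\oplus\Pi N\cong N'\oplus\Pi N'$. Conversely $G(\Pi N)=\Pi G(N)\cong G(N)$ because $G(N)$ is of type $\mathtt Q$, while $N\not\cong\Pi N$ for $N$ of type $\mathtt M$. This gives exactly two-to-one on the type $\mathtt M$ side. On the type $\mathtt Q$ side $N\cong \Pi N$, and if $\widetilde G(N)\cong\widetilde G(N')$ or $\widetilde G(N)\cong\Pi\widetilde G(N')$ then $G(N)\cong G(N')$, hence $N\cong N'$, which gives injectivity.

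I expect the main obstacle to be the careful Krull--Schmidt/length bookkeeping in showing that $G(N)$ has length exactly one or two with the stated types. I would handle it uniformly via the adjunction surjection $F(S)\twoheadrightarrow N$ together with $GF(Y)\cong Y\oplus\Pi Y$.
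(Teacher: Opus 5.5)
Your proposal is correct and is essentially the paper's argument. The paper's proof combines the Skryabin equivalence (Theorem~\ref{thm::Skr}) with Theorem~\ref{thm::rethmC}, and your derivation of the types of $G(N)$ from Lemma~\ref{lem::bij} and the $(F,G)$-adjunction is the alternative route the paper itself mentions in the proof of that theorem. The one step you leave implicit is that $G(N)$ has a simple $\mc W$-submodule at all, which your adjunction argument needs when $N$ is infinite-dimensional. It holds because, for any nonzero $\mc W$-submodule $S\subseteq G(N)$, both $S+\ov\theta S$ and $S\cap\ov\theta S$ are $U(\g,e)$-stable, so either $S=G(N)$ or $S$ is simple with $G(N)=S\oplus\ov\theta S$.
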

\begin{proof}
This follows immediately from Theorems \ref{thm::Skr} and \ref{thm::rethmC}.
\end{proof}

\subsubsection{Finite-dimensional representations of ${\mc W}$} 

This section is devoted to the study of the relationship between finite-dimensional $U(\g,e)$-modules and those of ${\mc W}$, and to show that the finite $W$-superalgebras $U(\g,e)$ and ${\mc W}$ are not isomorphic. 
\begin{prop}\label{res::fdim} We have
\begin{itemize}
    \item[(i)] The algebra
${\mc W}$ admits finite-dimensional representations.
    \item[(ii)] The sets
\begin{equation*}
\begin{aligned}
& \{ G(M) \mid M \in \operatorname{Irr}^{\mathtt{M}}(U(\g,e)\textnormal{\text{-sMod}}_{\mathrm{fd}}) \}, \quad \text{and} \\
& \{ G'(M),\, \Pi G'(M) \mid M \in \operatorname{Irr}^{\mathtt{Q}}(U(\g,e)\textnormal{\text{-sMod}}_{\mathrm{fd}}) \}
\end{aligned}
\end{equation*}
provide exhaustive lists of all finite-dimensional irreducible ${\mc W}$-modules of type $\mathtt{Q}$ and type $\mathtt{M}$, respectively. 
    \item[(iii)]  For any simple $U(\g,e)$-module $X$ of  type $\texttt{M}$  and any simple $U(\g,e)$-module $Y$ of  type~$\texttt{Q}$, the following equalities hold:
    \begin{align} \label{eq::dimSimples}
&\dim G(X) = \dim X \text{ and } \dim G'(Y) =\frac{1}{2}\dim Y,
\end{align}
\end{itemize}
\end{prop}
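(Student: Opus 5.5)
The plan is to reduce everything to the isomorphism $U(\g,e)\cong \mc W\otimes {\mc Cl}(1)$ of Theorem \ref{thm::1}, together with the existence of finite-dimensional $U(\g,e)$-modules supplied by Proposition \ref{prop::FrobeniusExt}. For (i), I start from a nonzero finite-dimensional $U(\g_\oa,e)$-module $M$, which exists by \cite[Theorem 1.2.3-(1)]{Lo10}. Proposition \ref{prop::FrobeniusExt} then gives that $\mathsf{Ind}(M)$ is finite-dimensional. It is also nonzero: by adjunction $\mathrm{Hom}(\mathsf{Ind} M,\mathsf{Ind}M)\cong \mathrm{Hom}(M,\mathsf{Res}\,\mathsf{Ind} M)$, and the unit $M\to \mathsf{Res}\,\mathsf{Ind}M$ is injective, because on the Whittaker side $\Ind(N)=U(\g)\otimes_{U(\g_\oa)}N$ contains $N$ as a $\g_\oa$-submodule by PBW. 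Restricting $\mathsf{Ind}(M)$ along $G$ then produces a nonzero finite-dimensional $\mc W$-module.

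For (iii), I use that $G$ does not change the underlying superspace, so $\dim G(X)=\dim X$ for every $X$. If $Y$ is simple of type \texttt{Q}, then by Theorem \ref{thm::rethmC} and Lemma \ref{lem::bij} (the discussion preceding Corollary \ref{thm::UWsimples}), $G(Y)\cong G'(Y)\oplus \Pi G'(Y)$ with $G'(Y)$ simple of type \texttt{M}. Since $\Pi$ preserves dimension, this gives $\dim G'(Y)=\frac12\dim Y$. The one point to double-check is the decomposition itself. I would argue it directly: the odd automorphism $J$ of $Y$ commutes (super-)with $\ov\theta$, so $J\ov\theta$ is an even $\mc W$-endomorphism of $G(Y)$ squaring to a nonzero scalar, and its eigenspaces give the two summands, swapped by $\ov\theta$.

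For (ii), I restrict the equivalence $\mathbb E$ of Theorem \ref{thm::rethmC} and the maps \eqref{eq::1412}, \eqref{eq::1411} to finite-dimensional objects. This restriction is legitimate because $F$ and $G$ preserve finite-dimensionality: $U(\g,e)$ is free of rank two over $\mc W$, so $\dim F(V)=2\dim V$. If $X$ is finite-dimensional simple of type \texttt{M}, then $G(X)$ is finite-dimensional simple of type \texttt{Q}. If $Y$ is of type \texttt{Q}, then $G'(Y)$ and $\Pi G'(Y)$ are finite-dimensional simple of type \texttt{M}.

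Exhaustiveness is where I expect the main care is needed. Given a finite-dimensional simple $\mc W$-module $V$, I form $F(V)$, which is finite-dimensional, and use Lemma \ref{lem::bij}: $G(F(V))\cong V\oplus \Pi V$. If $V$ is of type \texttt{Q}, then $F(V)$ is a direct sum of two simple modules of type \texttt{M}, $X$ and $\Pi X$, by \cite[Proposition 8.4]{C95}, and $V\cong G(X)$ follows by comparing composition factors in $G(F(V))$. If $V$ is of type \texttt{M}, then $F(V)=Y$ is simple of type \texttt{Q}, and $G(Y)\cong V\oplus\Pi V$ forces $V\in\{G'(Y),\Pi G'(Y)\}$ by Krull--Schmidt for finite-dimensional modules.
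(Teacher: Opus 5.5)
Your proposal is correct and follows essentially the same route as the paper: (i) via Losev's existence theorem, Proposition \ref{prop::FrobeniusExt} and restriction along $G$; (ii) by restricting the correspondences of Theorem \ref{thm::rethmC} and Lemma \ref{lem::bij} to finite-dimensional modules; and (iii) from the fact that $G$ preserves underlying superspaces, together with the splitting $G(Y)\cong G'(Y)\oplus \Pi G'(Y)$. The extra details you supply, which the paper leaves implicit, all check out: that $\mathsf{Ind}(M)\neq 0$ via the injective unit, the eigenspace decomposition of $G(Y)$ under the even $\mc W$-endomorphism $J\ov\theta$, and that $F$ and $G$ preserve finite-dimensionality.
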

\begin{proof}
Since the proof of Part (ii) is already contained in the proof of Corollary \ref{thm::UWsimples}, it remains to prove Parts (i) and (iii). As we have alluded to earlier, there is a finite-dimensional $U(\g,e)$-module by applying Proposition \ref{prop::FrobeniusExt} and \cite[Theorem 1.2.3-(1)]{Lo10}, thereby completing the proof of Part (i). Finally, the proof of Part (iii) follows by Lemma \ref{lem::bij} and Theorem \ref{thm::rethmC}. This completes the proof.   \end{proof}

\begin{rem} \label{res::fdim2}
It was conjectured in \cite[Conjecture 1.3 (2)]{ZS17} that the finite $W$-superalgebra $U(\g, e)$ always admits a two-dimensional irreducible representation. A sufficient condition for this  conjecture was subsequently established in \cite[Proposition~4.3]{ZS19},  which presumes the validity of the conjecture  \cite[Conjecture 4.2]{ZS19} that ${\mc W}$ always admits a one-dimensional representation. Here, we can provide an alternative approach that not only recovers this sufficiency but also gives a more refined characterization of the interplay between these two conjectures. Namely, the following equivalences are established via \eqref{eq::dimSimples}: 
\begin{itemize}
    \item[$\bullet$]   ${\mc W}$ affords a one-dimensional representation if and only if $U(\g, e)$ affords a two-dimensional irreducible representation of type $\texttt{Q}$. 
    \item[$\bullet$]   ${\mc W}$ affords a  two-dimensional irreducible representation of type $\texttt{Q}$  if and only if   $U(\g, e)$ affords a two-dimensional irreducible representation of type ${\texttt 
    M}$. 
\end{itemize}

If $e$ is principal in $\g_\oa$, then all irreducible $U(\g, e)$-modules are finite-dimensional; see, e.g., \cite[Proposition 3.7]{PS16}. It follows that all irreducible ${\mc W}$-modules are likewise finite-dimensional.   
\end{rem}

We conclude this subsection with the following corollary, which states that for a fixed good grading $\Gamma$, there exist exactly two isomorphism classes of finite $W$-superalgebras associated to $\Gamma$.
\begin{cor} \label{cor::NonIso}
Let $\Gamma$ be a  good grading for $e$. Then the  finite $W$-superalgebras ${\mc W}$ and $U(\g,e)$ associated to $\Gamma$ are not isomorphic. 
\end{cor}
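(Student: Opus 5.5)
The plan is to argue by contradiction. I will compare the finite-dimensional simple modules of the two algebras, recording both their dimensions and their types \texttt{M}/\texttt{Q}. Suppose there is an isomorphism of superalgebras $\Phi\colon \mc W\xrightarrow{\cong} U(\g,e)$. Pulling back along $\Phi$ gives an equivalence $U(\g,e)\text{-sMod}\simeq \mc W\text{-sMod}$. This equivalence commutes with $\Pi$, because $\Phi$ is even. It therefore preserves simplicity, finite-dimensionality, the underlying dimension, and the type of a simple module, since type \texttt{Q} means exactly $X\cong \Pi X$.

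I will encode this data as sets of dimensions. Let $A_{\texttt M}$ and $A_{\texttt Q}$ be the sets of dimensions of finite-dimensional simple $\mc W$-modules of type \texttt{M} and of type \texttt{Q}, respectively. Let $B_{\texttt M}$ and $B_{\texttt Q}$ be the analogous sets for $U(\g,e)$. The hypothetical isomorphism $\Phi$ would force
\[
A_{\texttt M}=B_{\texttt M}\quad\text{and}\quad A_{\texttt Q}=B_{\texttt Q}.
\]

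Next I compute $B_{\texttt M}$ and $B_{\texttt Q}$ in terms of $A_{\texttt M}$ and $A_{\texttt Q}$, using Theorem \ref{thm::rethmC}, Lemma \ref{lem::bij} and Proposition \ref{res::fdim}(ii),(iii). For the type \texttt{M} modules, $G$ gives a bijection up to isomorphism between finite-dimensional simple $U(\g,e)$-modules of type \texttt{M} and finite-dimensional simple $\mc W$-modules of type \texttt{Q}, with $\dim G(X)=\dim X$. Hence $B_{\texttt M}=A_{\texttt Q}$. For the type \texttt{Q} modules, a finite-dimensional simple $U(\g,e)$-module $Y$ of type \texttt{Q} satisfies $\dim G'(Y)=\tfrac12\dim Y$, where $G'(Y)$ is of type \texttt{M}. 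Conversely, every finite-dimensional simple $\mc W$-module of type \texttt{M} arises as $G'(Y)$ or $\Pi G'(Y)$, and $\Pi$ does not change dimension. Equivalently, $F(V)$ is simple of type \texttt{Q} with $\dim F(V)=2\dim V$, by Lemma \ref{lem::bij}. Hence $B_{\texttt Q}=2A_{\texttt M}$.

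Combining the two descriptions gives $A_{\texttt M}=B_{\texttt M}=A_{\texttt Q}$ and $A_{\texttt Q}=B_{\texttt Q}=2A_{\texttt M}$, so $A_{\texttt M}=2A_{\texttt M}$. By Proposition \ref{res::fdim}(i), $\mc W$ has a finite-dimensional simple module, so $A_{\texttt M}\cup A_{\texttt Q}\neq\emptyset$. Since $A_{\texttt M}=A_{\texttt Q}$, the set $A_{\texttt M}$ is a nonempty set of positive integers. Its minimum $m$ would then satisfy $m=2m$, which is impossible. This contradiction shows that no such $\Phi$ exists.

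The main obstacle is the bookkeeping in the computation of $B_{\texttt M}$ and $B_{\texttt Q}$. I have to make sure that the correspondences of Theorem \ref{thm::rethmC} restrict to \emph{finite-dimensional} simples. This holds because $F$ and $G$ multiply dimensions by at most $2$. I also have to check that every type and dimension is realized in both directions, so that I get equalities of sets and not mere inclusions. If one prefers to avoid equalities of sets, it is enough to compare minima. Let $d$ be the minimal dimension of a finite-dimensional simple $\mc W$-module of type \texttt{M}. Transporting along $\Phi$, and using $A_{\texttt Q}=2A_{\texttt M}$ together with $A_{\texttt M}=A_{\texttt Q}$, produces a type \texttt{M} simple module of dimension $d/2<d$, contradicting minimality.
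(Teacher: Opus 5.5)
Your proof is correct and follows essentially the same route as the paper: transport finite-dimensional simples along a hypothetical isomorphism, preserving dimension and type. You then use $X\mapsto G(X)$ (type \texttt{M} to type \texttt{Q}, same dimension) and $Y\mapsto G'(Y)$ (type \texttt{Q} to type \texttt{M}, half the dimension) from Proposition~\ref{res::fdim} to contradict minimality, and your identity $A_{\texttt M}=A_{\texttt Q}=2A_{\texttt M}$ neatly packages the paper's two-case minimal-dimension argument. One wording fix: on type \texttt{M} simples $G$ is two-to-one rather than a bijection, since $G(X)\cong G(\Pi X)$, but this leaves your dimension sets unchanged.
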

\begin{proof}
Let $\mc F_1:=U(\mf{g},e)\text{-}\mathrm{sMod}_{\mathrm{fd}}$ and $\mc F_2:={\mc W}\text{-}\mathrm{sMod}_{\mathrm{fd}}$ be the categories of  finite-dimensional $U(\g,e)$- and $\mc W$-modules, respectively. By Proposition \ref{res::fdim}, both categories are non-empty.  Suppose on the contrary that $\mc W\cong U(\g,e)$. Then we have a dimension- and type-preserving isomorphism $f:\mc F_1 \rightarrow \mc F_2$. Let $N\in\mc F_1$ be an object of minimal dimension, and set  $d:=\dim N$. If $N$ is of type $\texttt{Q}$, it follows from Parts (ii), (iii) of  Proposition \ref{res::fdim} that 
\begin{align*}
&\min\{ \dim G'(M) \mid M \in \operatorname{Irr}^{\mathtt{Q}}(\mc F_1)\} = \frac{1}{2}d < d,
\end{align*} a contradiction. Therefore,  $N$ is of type $\texttt{M}$ and so is $f(N)$. By Parts (ii), (iii) of Proposition~\ref{res::fdim} again, we conclude that $G(f(N)) \in \operatorname{Irr}^{\mathtt{Q}}(\mathcal{F}_1)$ reaches the minimal dimension $d$. Applying the same reasoning to $G(f(N))$ again leads to a contradiction. This completes the proof.\end{proof}

\section{Applications and Examples} \label{sect::eg}

\subsection{Finite-dimensional  $U(\g,e)$-modules} \label{sect::appendix} 
     Retaining the notation and setup from Section~\ref{sect::repW}, this subsection is devoted to the study of finite-dimensional modules over the finite W-superalgebra $U(\g,e)$ associated to a basic classical  Lie superalgebra $\g$ with an even nilpotent  element $e$, as constructed in Subsection~\ref{sect::PrDef}. We set $U(\g_\oa, e)$ to be the finite $W$-algebra associated to $\g_\oa$ corresponding to the nilpotent element $e\in \g_\oa$ with the restricted good grading $\Gamma|_{\g_\oa}$ to $\g_\oa$ for $e$ and the corresponding nilpotent subalgebra $\mathfrak{m}_{\bar{0}}$.
	
	  First, we briefly review the approach to prove existence of finite-dimensional representations of finite $W$-algebras $U(\g_\oa,e)$  in \cite{Pr07, Pr07b, Lo10, Lo11}.  Let $G$ be the simply connected reductive Lie group with Lie algebra $\g_\oa$. Let ${\mathbb O}:= G\cdot \chi$ denote the coadjoint orbit of $\chi$, and $\ov{\mathbb O} = \ov{G\cdot \chi}$ its closure. For a primitive ideal $I$ of $U(\g_\oa)$, we let $\mc V(I)\subset \g_\oa^\ast$ denote its associated variety; see, e.g., \cite[Subsection 3.2]{Pr07b}.  Premet \cite[Theorem 3.1]{Pr07b} proved that if  $I=\Ann_{U(\g_\oa)}({\rm Sk}^0(N))$ is the annihilator ideal of ${\rm Sk}^0(N)$ for some   $U(\g_\oa,e)$-module $N$, then 
		\begin{align}
			&\mc V(I) \supseteq \ov{\mathbb O}, \label{eq::Prm}
		\end{align} with equality holding when $N$ is finite-dimensional. 
		
			Denote by $\mc I_{\mathbb O}$ the set of all primitive ideals $I$ of $U(\g_\oa)$ such that $\mc V(I) = \ov{\mathbb O}$.  To further explore the finite-dimensional representations  of $U(\g_\oa,e)$, Premet in \cite[Conjecture~3.2]{Pr07b} conjectured  that any ideal 
			 $I\in \mc I_{\mathbb O}$ coincides with $\mathrm{Ann}_{U(\g_\oa)}{{\rm Sk}^0}(N)$ for some finite-dimensional irreducible $U(\g_\oa,e)$-module $N$. This conjecture was subsequently established by Premet \cite[Theorem 1.1]{Pr07} under certain conditions on $I$, and was later settled in full generality by Losev \cite[Theorem~1.2.2]{Lo10}. This implies that an irreducible $U(\g_\oa,e)$-module $N$ is finite-dimensional if and only if $ \Ann_{U(\g_\oa)}{\rm Sk}^0(N)\in \mc I_{\mathbb O}$; see also \cite[Subsection 5.2]{Lo10b}.  As a consequence, $U(\g_\oa,e)$ admits finite-dimensional irreducible representations; see \cite[Corollary 4.1]{Pr07}.

	Recall from Subsection \ref{sect::322} the adjoint pair $(\mathsf{Ind}, \mathsf{Res})$ of exact functors between the categories of $U(\g_\oa,e)$- and $U(\g,e)$-modules. By Lemma~\ref{lem::nilpotentlem},  a $U(\g,e)$-module $N$ is finite dimensional if and only if the $U(\g_\oa, e)$-module $\mathsf{Res}(N)$ is finite dimensional. This allows us to determine  finite-dimensionality of $U(\g,e)$-modules in a manner analogous to the one described above. Indeed, from Losev's characterization, we have that a finitely generated $U(\g_\oa,e)$-module $N$ is finite dimensional if and only if $\mc V\left(\mathrm{Ann}_{U(\g_\oa)}\left({\rm Sk}^0(N)\right)\right)=\ov{\mathbb O}$.
    Properties of our restriction functor now imply the following.
    \begin{cor} Let $N$ be a finitely generated $U(\g,e)$-module. Then $N$ is finite dimensional if and only if
        $\mc V\left(\mathrm{Ann}_{U(\g_\oa)}\left(\Res{\rm Sk}(N)\right)\right)=\ov{\mathbb O}$.
    \end{cor}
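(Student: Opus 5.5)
The plan is to chain three facts: Lemma \ref{lem::nilpotentlem}, the commutative diagram \eqref{diagram::1}, and Losev's characterization quoted just before the statement. Set $N_0:=\mathsf{Res}(N)$. By the right-hand diagram in \eqref{diagram::1}, $\mathsf{Res}(N)=\mathrm{Wh}^0_\chi(\Res\,{\rm Sk}(N))$. By the classical Skryabin equivalence for $\g_\oa$, this gives ${\rm Sk}^0(N_0)\cong \Res\,{\rm Sk}(N)$ as $\g_\oa$-modules. Consequently
\[
\mathrm{Ann}_{U(\g_\oa)}\bigl({\rm Sk}^0(N_0)\bigr)=\mathrm{Ann}_{U(\g_\oa)}\bigl(\Res\,{\rm Sk}(N)\bigr).
\]
To see that $\Res\,{\rm Sk}(N)$ lies in $\g_\oa\Wmod^{\chi^0}$, note that ${\rm Sk}(N)\in\g\Wmod^\chi$ and $\mf m_\oa\subseteq\mf m$, so each $a-\chi(a)$ with $a\in\mf m_\oa$ acts locally nilpotently.

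Next, apply Lemma \ref{lem::nilpotentlem} with $\mf a=\mf m$ and $V={\rm Sk}(N)$. Its Whittaker vectors for $\mf m$ form $\mathrm{Wh}_\chi({\rm Sk}(N))\cong N$ by Theorem \ref{thm::Skr}, and those for $\mf m_\oa$ form $N_0$. The lemma then says $N$ is finite dimensional if and only if $N_0$ is. This reduces the corollary to the statement that $N_0$ is finite dimensional exactly when $\mc V(\mathrm{Ann}_{U(\g_\oa)}{\rm Sk}^0(N_0))=\ov{\mathbb O}$.

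The remaining step, and the main obstacle, is to invoke Losev's criterion, which is stated for finitely generated $U(\g_\oa,e)$-modules. So one must check that $N_0$ is finitely generated over $U(\g_\oa,e)$ whenever $N$ is finitely generated over $U(\g,e)$. I would first try the following argument. Since $N$ is a quotient of a finite sum of copies of $U(\g,e)$ and its parity shift, and $\mathsf{Res}$ is exact (Proposition \ref{prop::FrobeniusExt}), it suffices to show that $\mathsf{Res}(U(\g,e))=\mathrm{Wh}^0_\chi(Q_{\mf l})$ is finitely generated over $U(\g_\oa,e)$.

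For this, use the PBW identification $Q_{\mf l}\cong U(\g_\oa)\otimes\Lambda$ as a $\g_\oa$-module, where $\Lambda$ is a finite-dimensional exterior algebra on a complement of $\mf m_\ob$ in $\g_\ob$. Filter $\Lambda$ by an $\ad\g_\oa$-stable filtration, in the spirit of the proof of Proposition \ref{prop::FrobeniusExt}, so that the graded pieces of $Q_{\mf l}$ become $Q^0\otimes E_i$ with $E_i$ finite dimensional. Then \cite[Theorem 8.1]{BK02}, together with exactness of $\mathrm{Wh}^0_\chi$ on $\g_\oa\Wmod^{\chi^0}$, shows each $\mathrm{Wh}^0_\chi(Q^0\otimes E_i)$ is finitely generated over $U(\g_\oa,e)$, namely a translation of a free module of rank one. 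Since $U(\g_\oa,e)$ is Noetherian, being a filtered deformation of $\C[\mc S_\oa]$, finite generation passes through the extensions. If this becomes delicate, a fallback is to use finiteness of $U(\g,e)$ over a Kazhdan-graded image of $U(\g_\oa,e)$ via $\gr^K$ (Theorem \ref{thm::GGrev}(i)).

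Finally, combine the two equivalences: $N$ is finite dimensional iff $N_0$ is, iff $\mc V(\mathrm{Ann}_{U(\g_\oa)}\Res\,{\rm Sk}(N))=\ov{\mathbb O}$.
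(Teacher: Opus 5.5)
Your main chain is the paper's own argument, given in the paragraph preceding the corollary. The right-hand square of \eqref{diagram::1} together with Skryabin's equivalence for $\g_\oa$ gives ${\rm Sk}^0(\mathsf{Res}(N))\cong\Res\,{\rm Sk}(N)$. Lemma~\ref{lem::nilpotentlem}, applied with $\mf a=\mf m$ and $V={\rm Sk}(N)$, shows that $N$ is finite dimensional iff $\mathsf{Res}(N)$ is. Losev's criterion then finishes. The paper never checks that $\mathsf{Res}(N)$ is finitely generated; this is absorbed into ``properties of our restriction functor''. You are right that the implication from $\mc V\bigl(\mathrm{Ann}_{U(\g_\oa)}\Res\,{\rm Sk}(N)\bigr)=\ov{\mathbb O}$ to finite dimensionality needs it. However, your argument for this step does not work as written.

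\textbf{The identification of $Q_{\mf l}$ is false.} $Q_{\mf l}$ is not isomorphic to $U(\g_\oa)\otimes\Lambda$ as a $\g_\oa$-module. $\Res Q_{\mf l}$ is a generalized Whittaker module, whereas $U(\g_\oa)\otimes\Lambda$, being free over $U(\g_\oa)$, is not: for $0\neq a\in\mf m_\oa$, $a-\chi(a)$ acts injectively on the domain $U(\g_\oa)$. Moreover, a complement of $\mf m_\ob$ in $\g_\ob$ is in general not $\ad\g_\oa$-stable, so ``an $\ad\g_\oa$-stable filtration of $\Lambda$'' has no meaning. What is true is that $\Res Q_{\mf l}$ is a quotient of $\Res\,\Ind(Q^0)\cong\Lambda(\g_\ob)\otimes Q^0$, where $Q^0=U(\g_\oa)\otimes_{U(\mf m_\oa)}\C_\chi$.

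\textbf{The translation result does not give finite generation.} As used in the proof of Proposition~\ref{prop::FrobeniusExt}, it only gives a vector space isomorphism $\mathrm{Wh}^0_\chi(E\otimes M)\cong E\otimes\mathrm{Wh}^0_\chi(M)$. It says nothing about finite generation as a left $U(\g_\oa,e)$-module.

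\textbf{The $\gr^K$ fallback has nothing to apply to.} It presupposes an algebra map $U(\g_\oa,e)\to U(\g,e)$ along which $\mathsf{Res}(N)$ is a restriction. But $\mathsf{Res}(N)=\mathrm{Wh}^0_\chi(\Res\,{\rm Sk}(N))$ is in general strictly larger than $N\cong\mathrm{Wh}_\chi({\rm Sk}(N))$, so no such description is available.

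\textbf{A short argument avoids all of this.} If $n_1,\dots,n_k$ generate $N$, then the elements $\bar 1\otimes n_i$ generate ${\rm Sk}(N)=Q_{\mf l}\otimes_{U(\g,e)}N$ over $U(\g)$. Since $U(\g)$ is a finitely generated left $U(\g_\oa)$-module by PBW, $\Res\,{\rm Sk}(N)$ is finitely generated over $U(\g_\oa)$. As $U(\g_\oa)$ is Noetherian, $\Res\,{\rm Sk}(N)$ is a Noetherian module. The category $\g_\oa\Wmod^{\chi^0}$ is closed under submodules, so $\Res\,{\rm Sk}(N)$ is a Noetherian object there. The equivalence $\mathrm{Wh}^0_\chi$ preserves lattices of subobjects, so $\mathsf{Res}(N)$ is a Noetherian, hence finitely generated, $U(\g_\oa,e)$-module. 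With this in place, your proof is complete.
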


    We can further characterize the finite-dimensionality of representations of $U(\mathfrak{g},e)$ in terms of primitive ideals in $\mathcal{I}_{\mathbb{O}}$ as follows:

	   \begin{prop} Let $N$ be a   $U(\g,e)$-module. Then the following are equivalent:
	   	\begin{itemize}
	   		\item[(i)] $N$ is finite dimensional.
	   		\item[(ii)] $\mathsf{Res}(N)$ is of finite length,  and all the annihilator ideals of the composition factors of $\Res{\rm Sk}(N)$ lie in $\mc I_{\mathbb O}$. 
	   		\item[(iii)] $\mathsf{Res}(N)$ is of finite length, and  $${\emph \Ann}_{U(\g)}({\rm Sk}(N))\supseteq {\emph \Ann}_{U(\g)}(U(\g)/U(\g)I),$$ for some $I \in \mc I_{\mathbb O}$.
	   	\end{itemize}
	   \end{prop}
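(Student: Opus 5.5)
The idea is to move everything to the even side with $\mathsf{Res}$ and then apply Losev's characterization for $U(\g_\oa,e)$. By the commutative diagram \eqref{diagram::1}, $\mathsf{Res}(N)=\mathrm{Wh}^0_\chi(\Res{\rm Sk}(N))$. Skryabin's equivalence for $\g_\oa$ (exact and composition-series preserving) then gives a bijection between the composition factors of $\mathsf{Res}(N)$ and those of $\Res{\rm Sk}(N)$ in $\g_\oa\Wmod^{\chi^0}$, namely $S\mapsto {\rm Sk}^0(S)$. Lemma~\ref{lem::nilpotentlem}, with $\mf a=\mf m$, gives that $N$ is finite dimensional if and only if $\mathsf{Res}(N)$ is.

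\textbf{(i)$\Leftrightarrow$(ii).} Suppose $N$ is finite dimensional. Then $\mathsf{Res}(N)$ is finite dimensional, hence of finite length. Each composition factor $S$ is a finite-dimensional irreducible $U(\g_\oa,e)$-module, so by Premet's equality in \eqref{eq::Prm} together with primitivity of $\Ann\,{\rm Sk}^0(S)$, we get $\Ann_{U(\g_\oa)}{\rm Sk}^0(S)\in\mc I_{\mathbb O}$. Conversely, assume (ii). Each composition factor $S$ of $\mathsf{Res}(N)$ is irreducible with $\Ann\,{\rm Sk}^0(S)\in\mc I_{\mathbb O}$. By Losev's characterization (an irreducible $U(\g_\oa,e)$-module is finite dimensional if and only if this annihilator lies in $\mc I_{\mathbb O}$), each $S$ is finite dimensional. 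Finite length then gives $\dim\mathsf{Res}(N)<\infty$, and Lemma~\ref{lem::nilpotentlem} gives $\dim N<\infty$.

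\textbf{(ii)$\Rightarrow$(iii).} Let $M=\Res{\rm Sk}(N)$, which has a finite filtration with factors $L_1,\dots,L_k$, where $J_i=\Ann L_i\in\mc I_{\mathbb O}$. Then $J_1\cdots J_k$ annihilates $M$, so $\Ann_{U(\g)}{\rm Sk}(N)\supseteq\Ann_{U(\g)}(U(\g)/U(\g)J_1\cdots J_k)$. The task is to replace the product by a single $I\in\mc I_{\mathbb O}$, and this is the step I expect to be the main obstacle. Several distinct primitive ideals in $\mc I_{\mathbb O}$ may occur, and (iii) asks for a single one. My first attempt is to use that $\mc I_{\mathbb O}$ with fixed central character is finite, and to reinterpret (iii) as allowing $I$ to be chosen via annihilators of Harish-Chandra bimodules. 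I would then argue with the $(\g,\g_\oa)$-bimodule $U(\g)\otimes_{U(\g_\oa)}U(\g_\oa)/I$ and the translation-type inclusion $\Ann_{U(\g)}(U(\g)\otimes_{U(\g_\oa)} X)\subseteq\Ann_{U(\g)}(\Ind\Res X')$. If a single $I$ cannot be achieved in general, I would check whether the intended reading is ``for some $I$'' that is a product or intersection of members of $\mc I_{\mathbb O}$, and adjust accordingly.

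\textbf{(iii)$\Rightarrow$(i).} Assume (iii). Then $\Ann_{U(\g_\oa)}(M)\supseteq U(\g_\oa)\cap\Ann_{U(\g)}(U(\g)/U(\g)I)\supseteq$ some power-type ideal contained in $I$ up to radical. More directly, $\Ann_{U(\g)}(U(\g)/U(\g)I)\cap U(\g_\oa)$ annihilates $\Res(U(\g)/U(\g)I)$. As a $\g_\oa$-module this is a quotient of $U(\g_\ob$-part$)\otimes U(\g_\oa)/I$, namely $E\otimes U(\g_\oa)/I$ with $E$ finite dimensional. By the standard result that tensoring with a finite-dimensional module preserves associated varieties of annihilators, its annihilator has associated variety $\mc V(I)=\ov{\mathbb O}$. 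Each composition factor $L$ of $M$ is therefore annihilated by an ideal with associated variety contained in $\ov{\mathbb O}$. Combining this with Premet's inclusion \eqref{eq::Prm} gives $\mc V(\Ann L)=\ov{\mathbb O}$, so $\Ann L\in\mc I_{\mathbb O}$. This reduces (iii) to (ii), and hence gives (i).
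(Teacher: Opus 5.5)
Your arguments for (i)$\Leftrightarrow$(ii) and for (iii)$\Rightarrow$(ii)$\Rightarrow$(i) are sound and agree in substance with the paper. In (iii)$\Rightarrow$(i) you use $\mc V(\Ann(\Lambda(\g_\ob)\otimes V))=\mc V(\Ann V)$, where the paper uses finite-dimensionality of $\mathrm{Wh}^0_\chi(E\otimes{\rm Sk}^0(X))$ plus Premet's equality; either works.

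The gap is that nothing in your proposal reaches (iii). Your (ii)$\Rightarrow$(iii) stops at the product $J_1\cdots J_k$. The devices you list (finiteness of $\mc I_{\mathbb O}$ at a fixed central character, Harish-Chandra bimodules, translation-type inclusions) do not produce a single $I\in\mc I_{\mathbb O}$.

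The missing idea, which is the paper's route (it proves (i)$\Rightarrow$(iii) directly), is to use the adjunction $(\mathsf{Ind},\mathsf{Res})$ instead of a composition series. Suppose $N$ is simple and finite dimensional, and take a simple submodule $X\subseteq\mathsf{Res}(N)$.
\begin{itemize}
\item The adjoint map $\mathsf{Ind}(X)\to N$ is nonzero, hence surjective. So by \eqref{diagram::1}, ${\rm Sk}(N)$ is a quotient of $\Ind({\rm Sk}^0(X))$.
\item Set $I:=\Ann_{U(\g_\oa)}{\rm Sk}^0(X)$. Then $I\in\mc I_{\mathbb O}$ by the equality case of \eqref{eq::Prm}.
\item By \cite[Lemma~4.4]{CoM16}, $\Ann_{U(\g)}\Ind({\rm Sk}^0(X))=\Ann_{U(\g)}(U(\g)/U(\g)I)$. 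Only the inclusion $\supseteq$ is needed, and it follows from the observation you already made: the largest two-sided ideal $K\subseteq U(\g)I$ satisfies $K(u\otimes x)\subseteq U(\g)I\otimes x=U(\g)\otimes Ix=0$.
\end{itemize}

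Your suspicion about non-simple $N$ is, however, well founded. The paper's appeal to Proposition~\ref{prop::FrobeniusExt} yields a \emph{simple} $X$ with $N$ a quotient of $\mathsf{Ind}(X)$ only when $N$ is simple; in general that proposition only gives $X=\mathsf{Res}(N)$.

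To see that a single $I$ can genuinely fail, apply Kostant's theorem to $\Res(U(\g)/U(\g)I)\cong\Lambda(\g_\ob)\otimes U(\g_\oa)/I$. Let $\chi_\lambda$ be the central character of $I$ and let $\nu$ run over the weights of $\Lambda(\g_\ob)$. Then for every $z\in Z(\g_\oa)$, the ideal $\Ann_{U(\g)}(U(\g)/U(\g)I)$ contains $\prod_\nu(z-\chi_{\lambda+\nu}(z))$. Now take $N=N_1\oplus N_2$ with each $N_i$ finite-dimensional simple, such that the $Z(\g_\oa)$-central characters occurring in $\Res{\rm Sk}(N_1)$ and $\Res{\rm Sk}(N_2)$ are too far apart to lie in one such finite set. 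Then $N$ satisfies (i) and (ii) but not (iii).

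So (iii) should be read in one of two ways.
\begin{itemize}
\item For simple $N$: the adjunction argument above then works.
\item With $I$ replaced by a finite product of members of $\mc I_{\mathbb O}$: then your inclusion $\Ann_{U(\g)}{\rm Sk}(N)\supseteq\Ann_{U(\g)}(U(\g)/U(\g)J_1\cdots J_k)$, combined with your (iii)$\Rightarrow$(i) argument and $\mc V(J_1\cdots J_k)=\ov{\mathbb O}$, finishes the proof.
\end{itemize}
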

 We note that the ideal $\mathrm{Ann}_{U(\g)}(U(\mathfrak{g})/U(\g)I)$ in (iii) coincides with the largest two-sided ideal of $U(\g)$ contained in $U(\g)I$.
	   \begin{proof}		 
        First, we prove that (i) and (ii) are equivalent. Suppose that Condition (i) holds, namely, $N$ is finite-dimensional.	By Lemma~\ref{lem::nilpotentlem}, $\mathsf{Res}(N)$ is finite-dimensional. This implies that   $\Res({\rm Sk}(N))$ is of finite length, and it follows from  \cite[Theorem~3.1]{Pr07b} that  any annihilator ideal of a composition factor of $\Res{\rm Sk}(N)$ lies in $\mc I_{\mathbb O}$. This  proves that (i) implies (ii). Next, suppose that Condition (ii) holds.  Let $L$ be a composition factor of $\Res {\rm Sk}(N)$.  Then it follows that $\mc V(\Ann_{U(\g_\oa)}(L)) \subseteq \mc V(\Ann_{U(\g_\oa)}(\Res {\rm Sk}(N)))$ by definition. Furthermore, we have  $$\ov{\mathbb O}\subseteq \mc V(\Ann_{U(\g_\oa)}(L)),\text{ and }\mc V(\Ann_{U(\g_\oa)}(\Res {\rm Sk}(N))) = \ov{\mathbb O}$$ by \cite[Theorem~3.1]{Pr07b}. Therefore, $\ov{\mathbb O}= \mc V(\Ann_{U(\g_\oa)}(L))$ and so $\text{Wh}_{\chi}^0L$ is finite-dimensional by \cite[Theorem 1.2.2]{Lo10}. Therefore, $\text{Wh}_{\chi}^0 (\Res {\rm Sk}(N))$ is finite-dimensional, since it is of finite length. By Lemma \ref{lem::nilpotentlem}, this implies that $N= \text{Wh}_\chi({\rm Sk}(N))$ is finite-dimensional.

	   	Next, we shall show the equivalence of (i) and (iii). Suppose that $N$ is finite-dimensional. By Proposition \ref{prop::FrobeniusExt}, there is a finite-dimensional simple $U(\g_\oa,e)$-module $X$ such that $N$ is a quotient of $\mathsf{Ind}X$ so that   $I:=\Ann_{U(\g_\oa)}({\rm Sk}^0X)\in\mc I_{\mathbb O}$. Furthermore, \cite[Lemma ~4.4]{CoM16} says that $\Ann_{U(\g)}(\Ind{\rm Sk}^0(X)) = \Ann_{U(\g)}(U(\g)/U(\g)I)$. Thus, \[\Ann_{U(\g)}({\rm Sk}(N))\supseteq  \Ann_{U(\g)}(\Ind{\rm Sk}^0(X)) =\Ann_{U(\g)}(U(\g)/U(\g)I). \] This proves that (i) implies (iii).
	   	
	   	Finally, suppose that the conditions in (iii) hold, that is, $\Res(N)$ is of finite length and  $$\Ann_{U(\g)}({\rm Sk}(N))\supseteq \Ann_{U(\g)}(U(\g)/U(\g)I),$$ for some $I \in \mc I_{\mathbb O}$. 
	   	By \cite[Theorem 1.2.2]{Lo10}, there is a finite-dimensional simple $U(\g_\oa,e)$-module $X$ such that $\Ann_{U(\g_\oa)}({\rm Sk}^0(X)) = I$. 
        We have $\Ann_{U(\g)}({\rm Sk}(N))\supseteq \Ann_{U(\g)}(\Ind {\rm Sk}^0(X))$, and so 
	   	\begin{align*}
	   	&\Ann_{U(\g_\oa)}(\Res {\rm Sk}(N)) = \Ann_{U(\g)}({\rm Sk}(N)) \cap U(\g_\oa) \\ &\supseteq \Ann_{U(\g)}(\Ind {\rm Sk}^0(X))  \cap U(\g_\oa)  =  \Ann_{U(\g_\oa)}(\Res \Ind {\rm Sk}^0(X)) \\
        & =  \Ann_{U(\g_\oa)}(E\otimes  {\rm Sk}^0 (X)),
	   	\end{align*}  where $E = U(\g_\ob)$ under the adjoint action of $\g_\oa$. As seen in the proof of Proposition~\ref{prop::FrobeniusExt}, the fact that $\dim \left(\text{Wh}_{\chi}^0 ({\rm Sk}^0 (X))\right) <\infty$ implies that $\dim \left(\text{Wh}_\chi^0(E\otimes {\rm Sk}^0 (X))\right)<\infty$ by \cite[Theorem~8.1]{BK08}. Consequently, the associated variety $\mc V(\Ann_{U(\g_\oa)}E\otimes {\rm Sk}^0 X)$ coincides with $\ov{\mathbb O}$ by \cite[Theorem 3.1]{Pr07b}, and so we have 
	   	\[\ov{\mathbb O}\subseteq \mc V(\Ann_{U(\g_\oa)}(\Res {\rm Sk}(N))) \subseteq  \mc V(\Ann_{U(\g_\oa)}(E\otimes {\rm Sk}^0 X)) =\ov{\mathbb O}.\] Consequently, we obtain  $\ov{\mathbb O}= \mc V(\Ann_{U(\g_\oa)}(\Res {\rm Sk}(N))).$ This implies that any annihilator ideal of a subquotient of $\Res {\rm Sk}(N)$ lies in $\mc I_{\mathbb O}$, and so $\text{Wh}_\chi^0(\Res {\rm Sk}(N))$ is finite-dimensional by \cite[Theorem 1.2.2]{Lo10}, as $\Res {\rm Sk}(N)$ is of finite length. Therefore, by Lemma \ref{lem::nilpotentlem}, we may conclude that $\text{Wh}_\chi ({\rm Sk}(N))$ is finite dimensional as well. This completes the proof. 
	   \end{proof}

\begin{rem}
    The results obtained in this subsection remain valid for the finite W-algebras of the queer Lie superalgebras $\mf{q}(n)$ constructed in \cite{Zh14}.
\end{rem}

\subsection{Casimir's ghost element} \label{sect::341}
	For a basic classical Lie superalgebra $\mf a$, we recall in this subsection the notion of the ghost center $\widetilde{Z} (\mf a)$ that we need for our study of finite $W$-superalgebras of $\mf{osp}(1|2n)$ in Subsection \ref{sec:exam:osp}.
    
The ghost center  $\widetilde{Z}(\mf a)= Z(\mf a)\oplus \mc A(\mf a)$ was introduced in \cite{Go01}, where $\mc A(\mf a)$ denotes the anticenter spanned by all homogeneous elements $a\in U(\mf a)$ such that $ua =(-1)^{|u|(|a|+\ov 1)}au$, for any homogeneous $u\in \mf a$. 
	Recall the Casimir's ghost element $T\in U(\mf a)_\oa$ introduced in \cite{ABF97} (see also \cite{Mu97,GL00, Go01}) is such that $T^2 \in Z(\mf a)$ and $\mc A(\mf a) =Z(\mf a)T$, and that a central character $\eta: Z(\mf a)\rightarrow \C$ is said to be {\em strongly typical} if $\eta(T^2)\neq 0$ (see \cite{Go02}).

Let $\pi: U(\g) \to Q_{\mf l}$ be the canonical projection $r \mapsto r + I_{\mf l}$. It is well-known that the restriction of $\pi$ to the center $Z(\g)$ yields an injective homomorphism into the finite $W$-superalgebra $\mc W$; see, e.g., \cite[Subsection 4.5]{ZS19}. In the following, we extend this embedding to the ghost center $\widetilde{Z}(\g)$ as follows:
\begin{align*}
&q: \widetilde{Z}(\g) = Z(\g)\oplus \mc{A}(\g)\hookrightarrow \mc W,~~(z,a) \mapsto \pi(z)+\pi(a\theta),~~\text{ for }z\in Z(\g)~\text{ and }~a\in \mc A(\g),
\end{align*}  see also \cite[Lemma 6.2, Theorem 6.5]{G24}, where the specific case of $\mf{osp}(1|2n)$ was considered.
 To see this, we may note that $\pi(T)\in U(\g,e)$, which  is a non-zero element since $T^2\neq 0$ and $\pi(T)^2$ lie in $\pi(Z(\g))$. Therefore, by Theorem \ref{thm::1} we have $$\pi(T\theta) = \frac{1}{2}[\pi(T),\pi(\theta)]\in [U(\g,e),\pi(\theta)]\subseteq \mc W.$$
    This implies that $q$ is a well-defined homomorphism of (ungraded) associative algebras. Suppose that $z_1+z_2T$ lies in the kernel of $q$, for some $z_1, z_2\in Z(\g)$, then we have $\pi(z_2T\theta)=-\pi(z_1)$ in $\mc W$. Since $\pi(T\theta)$ is odd, this implies that $\pi(z_1)=0$ and $\pi(z_2T\theta)=0$. Therefore,  $\pi(z_2)^2\pi(T\theta)^2= \pi(z_2T\theta)^2=0$ in $\pi(Z(\g))$. Since $Z(\g)$ is an integral domain, we have $\pi(z_2)=0$. This shows that $q$ is an injection.

    For any central character $\eta: Z(\mathfrak{g}) \rightarrow \mathbb{C}$, we may note that the quotient algebra $q(\widetilde{Z}(\g))/(\ker\eta)$ is generated by   an element $$q(T)~ \text{ such that } ~q(T)^2= -\eta(T^2).$$ 
Therefore,  the irreducible $q(\widetilde{Z}(\g))$-modules  are parameterized by central characters $\eta: Z(\mathfrak{g}) \rightarrow \mathbb{C}$ and are denoted by $E_{\eta}$. Here, $E_\eta$ is an irreducible module with central character $\eta$, whose structure is determined as follows: \begin{itemize}
\item If $\eta$ is not strongly typical, then $E_\eta = \mathbb{C}w$ is a one-dimensional module such that  $q(T)w=0$.
\item If $\eta$ is strongly typical, then $E_\eta$ is a two-dimensional module of type $\texttt{Q}$, spanned by $\{w, q(T)w\}$ such that $q(T)^2 w = -\eta(T^2)w$.
\end{itemize}

\subsection{Principal finite $W$-superalgebras of $\mf{osp}(1|2n)$}\label{sec:exam:osp}
As a concrete example, we focus on the ortho-symplectic Lie superalgebras  $\g:=\mf{osp}(1|2n)$, for $n\geq 1$. We  recall its standard matrix realization in the ordered basis $\{v_0,v_{-n},\ldots,v_{-1},v_1,\ldots,v_n\}$ of $\C^{1|2n}$:
$$
\mathfrak{g} = \mathfrak{osp}(1|2n) = \left\{\left( 
\begin{array}{c|cc}
0 &  y^t & -x^t \\
\hline
x & a & b \\
y & c & -a^t
\end{array} \right) \in \mathfrak{gl}(1|2n) \;\middle|\;
\begin{matrix}
a, b, c \in \C^{n\times n}, \\
x, y \in \C^{n\times 1}, \\
b = b^t, c = c^t
\end{matrix}
\right\},
$$
where $A^t$ denotes the transpose of  a matrix $A$. Let $\{e_{i,j}\}_{-n\leq i,j \leq n}$ be the standard basis of $\mathfrak{gl}(1|2n)$  and $h_i = e_{i,i} - e_{-i,-i}$, for $1 \leq i\leq n$. Let $\h$ be the Cartan subalgebra of $\mf{osp}(1|2n)$ spanned by $h_i$, for $1\leq i\leq n$. Let  $\varepsilon_i \in \mathfrak{h}^*$ be the dual basis for $\{h_i\}_{1\leq i\leq n}$ determined by $\varepsilon_i(h_j) = \delta_{i,j}$. Then we define the following root system of $\g$: 
\begin{itemize}
\item The set of positive roots $\Delta^+ = \{\varepsilon_i, 2\varepsilon_i\}_{i=1}^n \cup \{\varepsilon_i \pm \varepsilon_j\}_{1 \le i < j \le n}$.
\item The simple roots are $\Pi = \{\alpha_i\}_{1\leq i\leq n}$, where $\alpha_i = \varepsilon_i - \varepsilon_{i+1}$ for $i < n$, and $\alpha_n = \varepsilon_n$. 
\end{itemize} We note that the odd roots in $\Delta^+$ are precisely the elements $\{\varepsilon_1, \dots, \varepsilon_n\}$, all of which are non-isotropic. 
Let $\Delta^- = -\Delta^+$ and define a non-degenerate $\g$-invariant supersymmetric bilinear form on $\g$ via $(u|v) = -\text{str}(uv)$, where $\text{str}$ denotes the supertrace function. This induces a non-degenerate bilinear form $(\_|\_)$ on $\mathfrak{h}^*$ determined by $(\varepsilon_i|\varepsilon_j) =\frac{1}{2}\delta_{ij}$.  
The inner products of the simple roots under the normalized form $(\_ | \_)$ are given by:\begin{align*}(\alpha_i | \alpha_i) = 1, \quad (\alpha_i | \alpha_{i+1}) = -\frac{1}{2}, \quad (\alpha_n | \alpha_n) = \frac{1}{2},\end{align*}for $i = 1, \dots, n-1$. For each root $\alpha\in \Delta:= \Delta^+\cup \Delta^-$, let $\g^\alpha \subset \g$ denote the corresponding root space. We define a principal   nilpotent element  $e \in \g_\oa$  by $$e = \sum_{i=1}^{n-1} u_{-\alpha_i} + u_{-2\alpha_n},$$ where $u_{\alpha} \in {\mf g}^\alpha$ is a fixed non-zero root vector for each $\alpha \in \Delta$. To complete this to an $\mf{sl}(2)$-triple $\{e, h, f\} \subset \g$, we set $$\begin{aligned}
h &= \sum_{j=1}^n -(2n - 2j + 1) h_j, \\
f &= \sum_{i=1}^{n-1} i(2n-i) u_{\alpha_i} + n^2 u_{2\alpha_n}. 
\end{aligned}$$  
Then $\langle e,h,f\rangle$ forms an $\mf{sl}(2)$-triple inside $\g$. The corresponding Dynkin grading $$\Gamma: \g = \bigoplus_{-2n\leq i\leq 2n} \g(i),$$ is   determined by  $\g(0) = \mf h,~~\g(-1) = \g^{\alpha_n}$ and
\[\g(-2) = \g^{\alpha_1}\oplus \cdots \oplus \g^{\alpha_{n-1}} \oplus \g^{2\alpha_n}. \] In this specific setup,   $\mathfrak{l} = 0$ is the only isotropic subspace of $\g(-1)$, and hence we have $\mc W = \mc W_{0}$.    By \cite[Theorem 6.5]{G24}, the map $q \colon \widetilde{Z}(\g) = Z(\g)\oplus \mc{A}(\g) \to \mc{W}$ defined in Section~\ref{sect::341}, given by
  \begin{align*}
&q\colon(z,a) \mapsto z+a\theta+I_0,~~\text{ for }z\in Z(\g)~\text{ and }~a\in \mc A(\g)
\end{align*}
is an isomorphism of associative algebras.

By combining the results established in the preceding sections, we obtain the following corollary. This also provides a confirmation of the conjecture in \cite[Conjecture 2]{P14}; see also \cite[Lemma~3.4]{PS16}, where the case of $\mf{osp}(1|2)$ was treated.

\begin{cor}\label{cor:type:osp12n}
Let $\mathfrak{g}=\mathfrak{osp}(1\vert 2n)$. Retaining the notation introduced above, we identify $Z(\mathfrak{g})$ with its image $\pi(Z(\mathfrak{g}))$.  The finite $W$-superalgebra $U(\g,e)$ is a free $Z(\g)$-module with generators $\pi(1), \pi(T), \pi(\theta)$ and $\pi(T\theta)$ satisfying that \begin{align} \label{eq::osp12nsimples}
&\pi(T)^2 \in Z(\g),~~\pi(\theta)^2 ={\frac{1}{2}},~~[\pi(T\theta), \pi(\theta)] = 0.
\end{align} In particular, the irreducible $U(\g,e)$-modules  are parameterized by central characters $\eta: Z(\mathfrak{g}) \rightarrow \mathbb{C}$ and are denoted by $\hat E_{\eta}$ with $\ker(\eta)\hat E_\eta=0$ and determined by: \begin{itemize}
\item If $\eta$ is not strongly typical, then $\hat E_\eta=\text{span}_\C\{v, \pi(\theta)v\}$ is a two-dimensional simple $U(\g,e)$-module of type $\texttt{Q}$ such that $\pi(T)\hat E_\eta =0$;  
\item If $\eta$ is strongly typical, then $\hat E_\eta$ is a two-dimensional simple module of type $\texttt{M}$ obtained by lifting the unique simple module of the quotient algebra $U(\g,e)/(\ker \eta)$ via the canonical projection, where the quotient is isomorphic to the rank-two Clifford superalgebra. 
\end{itemize}
\end{cor}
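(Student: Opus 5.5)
The plan is to combine Theorem~\ref{thm::1}, which gives $U(\g,e)\cong \mc W\otimes {\mc Cl}(1)$ via $\Theta\mapsto \pi(\theta)$, with the isomorphism $q\colon \widetilde Z(\g)\xrightarrow{\cong}\mc W$ of \cite[Theorem 6.5]{G24}. Since $\widetilde Z(\g)=Z(\g)\oplus Z(\g)T$ and $Z(\g)$ acts freely on $\widetilde Z(\g)$ (as $T$ is not a zero divisor over $Z(\g)$, $T^2\neq 0$ in the domain), $\mc W$ is a free $Z(\g)$-module with basis $\pi(1)$ and $\pi(T\theta)=q(T)$. Tensoring with ${\mc Cl}(1)=\C\oplus\C\Theta$ then shows that $U(\g,e)$ is free over $Z(\g)$ on the basis $\pi(1),\pi(T\theta),\pi(\theta),\pi(T\theta)\pi(\theta)$. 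The last basis element can be rewritten as $\pi(T)$ up to a scalar, because $\pi(T\theta)\pi(\theta)=\pi(T)\pi(\theta)^2=\tfrac12\pi(T)$ (using that $I_0$ is a left ideal and $\theta^2=\tfrac12[\theta,\theta]$ acts by $\tfrac12\chi([\theta,\theta])=\tfrac12$). This gives the asserted basis $\pi(1),\pi(T),\pi(\theta),\pi(T\theta)$.

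Next I verify the relations \eqref{eq::osp12nsimples}. First, $\pi(T)^2=\pi(T^2)\in Z(\g)$ because $T^2$ is central. Second, $\pi(\theta)^2=\tfrac12$ by the normalization $\omega_\chi(\theta,\theta)=1$. Third, $[\pi(T\theta),\pi(\theta)]=0$ because $\pi(T\theta)\in\mc W$ supercommutes with $\pi(\theta)$, as shown in the proof of Theorem~\ref{thm::1}. I also need the parities: $\pi(T\theta)$ is odd and $\pi(T)$ is even, so this commutator is the anticommutator.

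For the classification I fix a central character $\eta$. By Schur's lemma $Z(\g)$ acts by scalars on any simple module, and I expect some care in justifying this, since $U(\g,e)$ is countably dimensional and finite over its center, so Dixmier's argument applies. Hence every simple module factors through $U(\g,e)/(\ker\eta)\cong \mc W/(\ker\eta)\otimes{\mc Cl}(1)$. By Section~\ref{sect::341}, $\mc W/(\ker\eta)$ is generated by an odd element $q(T)$ with $q(T)^2=-\eta(T^2)$, and I split into two cases.

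In the \textbf{non-strongly-typical} case, the ideal generated by $q(T)$ is nilpotent and hence acts by zero on simples. The quotient is therefore ${\mc Cl}(1)$, whose unique simple supermodule is $\operatorname{span}\{v,\pi(\theta)v\}$. This module is of type $\texttt{Q}$, since it matches $F$ applied to the one-dimensional type-$\texttt{M}$ module $E_\eta$, and $\pi(T)$ acts by zero on it. Alternatively, I can read this off Theorem~\ref{thm::rethmC} directly. In the \textbf{strongly typical} case, $\mc W/(\ker\eta)\cong{\mc Cl}(1)$ is nondegenerate, so the quotient ${\mc Cl}(1)\otimes{\mc Cl}(1)\cong {\mc Cl}(2)\cong M(1|1)$. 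This is a simple superalgebra with a unique simple module, which is two-dimensional and of type $\texttt{M}$; it corresponds via $G$ to the type-$\texttt{Q}$ module $E_\eta$ of $\mc W$. The main obstacle I anticipate is exactly this step, namely confirming via Schur's lemma that central characters exhaust the simples.
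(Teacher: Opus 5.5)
Your proposal is correct and is essentially the paper's argument. The relations come from Theorem~\ref{thm::1}, and the classification is the paper's ``alternatively'' route: identify $U(\g,e)/(\ker\eta)$ with ${\mc Cl}(2)$ in the strongly typical case and with ${\mc Cl}(1)\otimes\C[x]/(x^2)$ otherwise. Your derivation of freeness from the bijectivity of $q$ together with Theorem~\ref{thm::1}, and your Dixmier--Schur step, make explicit what the paper leaves implicit. One harmless aside: since $\theta T=-T\theta$ and $\pi(\theta)^2=\tfrac12$, one actually gets $q(T)^2=-\tfrac12\pi(T^2)$, i.e.\ $-\tfrac12\eta(T^2)$ modulo $\ker\eta$ rather than $-\eta(T^2)$, but only whether this vanishes matters.
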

\begin{proof}
The relations \eqref{eq::osp12nsimples} follow from Theorem   \ref{thm::1}. Furthermore,  the conclusion regarding the classification of irreducible $U(\g,e)$-modules is a direct consequence of Theorem   \ref{thm::UWsimples}. Alternatively, we may observe that   for any character $\eta: Z(\mf{g}) \to \C$, the structure of  the quotient $U(\g,e)/(\ker\eta)$ is determined by the type of typicality of $\eta$. That is, it is isomorphic to a rank-two Clifford superalgebra whenever $\eta$ is strongly typical. In the other case, it is given by the tensor product of a rank-one Clifford superalgebra and a Grassmann superalgebra $\C[x]/(x^2)$, where $x$ is an odd indeterminate. This completes the proof.    
\end{proof}
It is worth pointing out that the classification of irreducible modules for the principal finite $W$-superalgebra {$\mc W$} of $\mf{osp}(1\vert{}2n)$, established earlier in \cite[Section 6]{G24}, can be recovered from the classification of irreducible $U(\g,e)$-modules in Corollary \ref{cor:type:osp12n} via Theorem \ref{thm::UWsimples}.

\end{document}